    \def\ps@copyright{\ps@empty
    \def\@oddfoot{\hfil\small\copyright 1997, \SMF}}
\newcommand{\SMF}{Soci\'et\'e Ma\-th\'e\-Ma\-ti\-que de France}
\newcommand{\BibTeX}{{\scshape Bib}\kern-.08em\TeX}
\newcommand{\T}{\S\kern .15em\relax }
\newcommand{\AMS}{$\mathcal{A}$\kern-.1667em\lower.5ex\hbox
        {$\mathcal{M}$}\kern-.125em$\mathcal{S}$}
\def\h1{\hspace{1cm}}
\def\h2{\hspace{2cm}}\def\h3{\hspace{3cm}}
\def\h4{\hspace{4cm}}\def\h5{\hspace{5cm}}
 \definecolor{grey}{rgb}{0.75,0.75,0.75}
\definecolor{orange}{rgb}{1.0,0.5,0.5}
\definecolor{brown}{rgb}{0.5,0.25,0.0} 
\definecolor{pink}{rgb}{1.0,0.5,0.5}
\def\dis{\displaystyle}
\newcommand{\ec}[1]{\vskip 3mm\noindent\setlength{\fboxrule}{0.5pt}
\textcolor{black\begin{minipage}[b]{15cm}
  }{\fbox{
\parbox[c][\height]{5cm}{\textcolor{black}{ #1 \vskip 1mm\noindent }}}}} 
\def\paragraph#1{\textit{#1}.}
\def\C{{\mathbb C}}
\newtheorem{tm}{Th\'eor\`eme}
\newtheorem{pp}{Proposition}
\newtheorem{lm}{Lemme}
\newtheorem{cl}{Corollaire}
\newtheorem{defin}{D\'efinition}
\newtheorem{req}{Remarque}
\newtheorem{avt}{Avertissement}
\newcommand{\cRM}[1]{\MakeUppercase{\romannumeral #1}}  
\newcommand{\faitdisparaitre}[1]{}
 \def\cqfd{\unskip\kern 6pt\penalty 500
\raise -2pt\hbox{\vrule\vbox to5pt{\hrule width 4pt
\vfill\hrule}\vrule}}
\def\Ker{\textsf{\,Ker\,}}
 \def\dfl{\textsf{\,dfl\,}}
\def\A{\mathbf{A}}
\def\A2{{\mathbf{A}}^2}
\def\AB0{{\A}^2_{B( {0} ,\,{1})}}
\def\AB#1#2{{\A}^2_{B( {#1} ,\,{#2})}}
\def\D{\mathbf{D}}
\def\N{\mathbf{N}}
\def\C{\mathbf{C}}
\def\E{\mathbf{E}}
\def\F{\mathbf{F}}
\def\T{\mathbf{T}}
\def\S{\mathbf{S}}
\def\mE{\mathcal{E}}
\def\binomial#1#2{{{#1}\choose{#2}}}
\def\minus{\backslash}
\def\min{\mathrm{min}}
\def\monomials#1{\mathcal{M}}
\def\rank{\mathrm{rang}}
\def\eval{\, eval\, }
\def\schur{\, Schur\, }
\def\vect{\, vec\,}
\def\dis{\displaystyle}
\def\scr{\scriptstyle}
\title[Approximation num\'erique de racines isol\'ees multiples]{Approximation num\'erique de racines isol\'ees multiples de syst\`emes analytiques}
\date{Version du \today}
\author{M.~Giusti}
\address{Marc Giusti \\Laboratoire LIX  \\ Campus de l'\'Ecole
  Polytechnique \\
1 rue Honor\'e d'Estienne d'Orves\\B\^atiment Alan
Turing\\CS35003\\91120 Palaiseau\\ France.} 
\email{Marc.Giusti@Polytechnique.fr} 
 \author{J.-C.~Yakoubsohn} 
\address{Jean-Claude Yakoubsohn \\ Institut de Math\'ematiques de
  Toulouse \\  Universit\'e Paul Sabatier \\ 118 route de Narbonne
  \\ 31062 Toulouse Cedex 9\\
France.}
\email{yak@mip.ups-tlse.fr} 
\subjclass{65F30, 65H10, 65Y20, 68Q25, 68W30}
\keywords{syst\`emes d'\'equations, racines singuli\`eres, d\'eflation, rang num\'erique, \'evaluation}
\begin{document}
\begin{abstract}
 L'approximation d'une racine isol\'ee multiple est un probl\`eme
    difficile. En effet la racine peut m\^eme \^etre r\'epulsive
  pour une m\'ethode de point fixe comme la m\'ethode de Newton. La
  litt\'erature sur le sujet est vaste mais les r\'eponses propos\'ees
  pour r\'esoudre ce probl\`eme ne sont pas satisfaisantes. Des
  m\'ethodes num\'eriques qui permettent de faire une analyse locale
  de convergence sont souvent \'elabor\'ees sous des hypoth\`eses
  particuli\`eres. Ce
  point de vue privil\'egiant l'analyse num\'erique n\'eglige la
  g\'eom\'etrie et la structure de l'alg\`ebre locale. C'est
    ainsi qu'ont \'emerg\'e des m\'ethodes qualifi\'es de
  symboliques-num\'eriques. Mais l'analyse num\'erique pr\'ecise de
  ces m\'ethodes pourtant riches d'enseignement n'a pas
  \'et\'e faite. 

  Nous proposons dans cet article une m\'ethode de type
  symbolique-num\'erique dont le traitement num\'erique est
  certifi\'e.  L'id\'ee g\'en\'erale est de construire une suite finie
  de syst\`emes admettant la m\^eme racine, appel\'ee \emph{suite de d\'eflation}, telle que la
  mutiplicit\'e de la racine chute strictement entre deux syst\`emes
  successifs. La racine devient ainsi r\'eguli\`ere lors du dernier
  syst\`eme. Il suffit alors d'en extraire un syst\`eme carr\'e
  r\'egulier pour obtenir ce que nous appelons \emph{syst\`eme
    d\'eflat\'e}.  Nous avions d\'ej\`a d\'ecrit la construction
  de cette suite de d\'eflation quand la racine est connue.
  L'originalit\'e de cette \'etude consiste d'une part \`a d\'efinir
  une suite de d\'eflation \`a partir d'un point proche de la racine
  et d'autre part \`a donner une analyse num\'erique de cette
  m\'ethode.  Le cadre fonctionnel de cette analyse est celui des
  syst\`emes analytiques constitu\'es de fonctions de carr\'e
  int\'egrable. En utilisant le noyau Bergman, noyau reproduisant de
  cet espace fonctionnel, nous donnons une $\alpha$-\emph{th\'eorie
    \`a la Smale} de cette suite de d\'eflation.  De plus nous
  pr\'esentons des r\'esultats nouveaux relatifs \`a la
  d\'etermination du rang num\'erique d'une matrice et \`a celle de la
  proximit\'e \`a z\'ero de l'application \'evaluation.  Comme
  cons\'equence importante nous donnons un algorithme de calcul d'une
  suite de d\'eflation qui est \emph{libre de $\varepsilon$},
  quantit\'e-seuil qui mesure l'approximation num\'erique, dans le
    sens que les entr\'ees de cet algorithme ne comportent pas
  la variable $\varepsilon$. 
\end{abstract}
\begin{altabstract}
The approximation of a multiple isolated root is a difficult
    problem. In fact the root can even be a repulsive root for a fixed
    point method like the Newton method. However there exists a huge
    literature on this topic but the answers given are not
    satisfactory. Numerical methods allowing a local convergence
    analysis work often under specific hypotheses. This viewpoint
    favouring numerical analysis forgets the geometry and the
    structure of the local algebra. Thus appeared so-called
    symbolic-numeric methods, yet full of lessons, but their precise
    numerical analysis is still missing.\\
    We propose in this paper a method of symbolic-numeric kind, whose
    numerical treatment is certified. The general idea is to construct
    a finite sequence of systems, admitting the same root, and called
    the \emph{deflation sequence}, so that the multiplicity of the
    root drops strictly between two successive systems. So the root
    becomes regular. Then we can extract a regular square system we
    call \emph{deflated system}. We described already the
    construction of this deflated sequence when the singular root is
    known. The originality of this paper consists on one hand to
    construct a deflation sequence from a point close to the root and
    on the other hand to give a numerical analysis of this
    method. Analytic square integrable functions build the functional
    frame. Using the Bergman kernel, reproducing kernel of this
    functional frame, we are able to give a $\alpha$-\emph{theory}
    \emph{\`a la Smale}. Furthermore we present new results on the
    determinacy of the numerical rank of a matrix and the closeness to
    zero of the evaluation map. As an important consequence we give an
    algorithm computing a deflation sequence \emph{free of
      $\varepsilon$}, threshold quantity measuring the numerical
    approximation, meaning that the entry of this algorithm does
    not involve the variable $\varepsilon$.
\end{altabstract}
\subjclass{65F30, 65H10, 65Y20, 68Q25, 68W30}
\keywords{syst\`emes d'\'equations, racines singuli\`eres, d\'eflation, rang num\'erique, \'evaluation}
\altkeywords{systems of equations, singular roots, deflation, numerical rank, 	evaluation}
\maketitle 
\def\smfbyname{}
\tableofcontents
\vspace{0.cm}

\def\indexspace{}
\begin{minipage}[b]{24cm}
  \begin{multicols}{3}
   \begin{minipage}{7cm}
\vspace{2cm}

\begin{theindex}

\item $\zeta$\dotfill 4
\item $\mu(\zeta)$\dotfill 4
\item $f$\dotfill 4
   \item $I$\dotfill 4
\item $\C\{x-\zeta\}$\dotfill 4
\item $I\C\{x-\zeta\}$\dotfill 4
\item $a_k(M)$\dotfill 9
  \item $b_k(M)$\dotfill 9
  \item $g_k(M)$\dotfill 9
  \item $s(\lambda)$\dotfill 9
  \item $s_k$\dotfill 9
 \item $\varepsilon$-rang\dotfill 10
\item $M_\varepsilon$\dotfill 10
\item $p(\lambda)$\dotfill 11
  \item $q(\lambda)$\dotfill 11
 \item $t$\dotfill 11
  \item $\omega$\dotfill 13
\item $\A2(\omega,R_{\omega})$\dotfill 13
\item $R_\omega$\dotfill 13
  \item $||f||$\dotfill 14
 \item $\nu_x$\dotfill 15
  \item $H(z,x)$\dotfill 15
  \item $\alpha_0$\dotfill 17
   \item $c_0$\dotfill 17
  \item $eval_x$\dotfill 17
  \item $\varepsilon$-valuation\dotfill 21
   \item $\vect$\dotfill 22
   \item $\Delta_k$\dotfill 22
   \item $S(f)$\dotfill 22
\item $\schur(M)$\dotfill 21
  \item $K(f)$\dotfill 23
   \item $\dfl(f)$\dotfill 23
  \item $\ell$\dotfill 23
  \item $N_{\textsf  {\tmspace  +\thinmuskip {.1667em}dfl\tmspace  +\thinmuskip {.1667em}}(f)}$\dotfill 		23
\item $\bar{\gamma}(f, \zeta)$\dotfill 27
  \item $\alpha(f, x)$\dotfill 28
  \item $\beta(f, x)$\dotfill 28
  \item $\gamma(f, x)$\dotfill 28
  \item $\lambda(f, x)$\dotfill 28
  \item $\mu(f, x)$\dotfill 28
\item $\theta$\dotfill 28
\item $[F]_\zeta$\dotfill 32
\end{theindex}
\end{minipage}
  \end{multicols}
\end{minipage}

\pagebreak

\section{ $\,$Syst\`emes \'equivalents et multiplicit\'e}
L'article \textbf{\textsf{Multiplicity hunting and approximating
    multiple roots of polynomial systems}}~\cite{GY2} fut \'ecrit par
les deux auteurs de mani\`ere purement heuristique. Nous en
pr\'esentons ici une analyse num\'erique, en simplifiant au passage
l'algorithme initialement exhib\'e.

\begin{defin}
  Une solution $\zeta$ \index{$\zeta$} d'un syst\`eme analytique $f=0$
  \index{$f$} (d\'efini dans un voisinage de $\zeta$) est dite
  \emph{isol\'ee} et \emph{multiple} si~:
\begin{itemize}
\item[1--] il existe un voisinage de $\zeta$ o\`u $\zeta$ est la seule
  solution de $f=0$~;
 \item[2--] la matrice Jacobienne $Df(\zeta)$ n'est pas de rang plein.
\end{itemize}
\end{defin}

Nous utiliserons indiff\'eremment les mots multiple et
singulier. En particulier nous appellerons un syst\`eme
\emph{singulier} s'il admet une solution singuli\`ere. De m\^eme
nous emploierons indistinctement les vocables \emph{solution} ou
\emph{racine} d'un syst\`eme. Afin de simplifier la lecture, la notation $f$ d\'esignera indistinctement une seule \'equation ou un syst\`eme d'\'equations.
Implicitement les boules seront toujours ouvertes.
\\
Remarquons que la premi\`ere hypoth\`ese implique que le nombre
d'\'equations $s$ est plus grand ou \'egal au nombre $n$ de variables. Notons
aussi que ce cadre inclut le cas d'un syst\`eme analytique obtenu par
localisation d'un syst\`eme alg\'ebrique.\\

Nous avions expliqu\'e dans \cite{GY2} comment d\'eriver un syst\`eme
\emph{r\'egulier} (c'est-\`a-dire admettant $\zeta$ comme solution
avec la matrice jacobienne $Df(\zeta)$ de rang plein) d'un
syst\`eme singulier~: \'evidemment ceci sous l'hypoth\`ese que la
solution $\zeta$ est exactement connue. Nous avions alors formalis\'e cette
transformation par la notion de syst\`emes \emph{\'equivalents}  en un
point $\zeta$.\\

Notons que cette transformation est obtenue \textbf{\textsf{sans ajout
    de nouvelles variables}} (trait important que nous soulignons).\\

 
La \emph{multiplicit\'e} d'une racine est un important invariant.
Dans le cas o\`u il n'y a qu'une variable et qu'une \'equation, la
multiplicit\'e d'une racine est exactement le nombre de d\'eriv\'ees
qui s'annulent en la racine, propri\'et\'e qui malheureusement n'est
plus valable dans le cas g\'en\'eral. Il faut alors introduire une
machinerie bien plus compliqu\'ee.\\

Appelons~:
\begin{itemize}
\item[1--]
$\C\{x-\zeta\}$ l'alg\`ebre des germes de fonctions analytiques en
$\zeta$, c'est-\`a-dire l'anneau local des s\'eries convergentes dans un
voisinage de $\zeta$, d'id\'eal maximal engendr\'e par
$x_1-\zeta_1,\ldots, x_n-\zeta_n$~; 
\item[2--] $I\C\{x-\zeta\}$ l'id\'eal induit engendr\'e par l'id\'eal
  $I = I(f):=<f_1,\ldots, f_s>$.
\end{itemize}

\begin{defin}
  La multiplicit\'e $\mu(\zeta)$\index{$\mu(\zeta)$} d'une racine
  isol\'ee $\zeta$ est d\'efinie comme la dimension de l'espace quotient
   $\C\{x-\zeta\}/I\C\{x-\zeta\}$.
\end{defin}

Relativement \`a un ordre local $<$ compatible de $\C\{x-\zeta\}$,
nous notons $LT_<(I\C(x-\zeta))$ l'id\'eal engendr\'e par les termes
dominants de tous les \'el\'ements de $I\C\{x-\zeta\}$.
\begin{defin}
Une base standard (minimale) de $I\C\{x-\zeta\}$ est un ensemble
(fini) de s\'eries de $I\C\{x-\zeta\}$ dont les termes dominants
engendrent minimalement $LT_<(I\C(x-\zeta))$.
\end{defin}
Il existe alors un nombre fini de mon\^omes, appel\'es mon\^omes
standard, qui n'appartiennent pas \`a $I$. Le th\'eor\`eme suivant
est classique dans la litt\'erature sur les bases standard, voir~\cite{cox05} page 178. 
\begin{tm}\label{coxp178}
Les assertions suivantes sont \'equivalentes~:
 \begin{itemize}
 \item[1--] La racine $\zeta$ est isol\'ee~;
 \item[2--] 
$dim\, \C \{x-\zeta\}/I\C(x-\zeta)$ est fini~;
 \item[3--] $dim\, \C \{x-\zeta\}/LT_<(I\C(x-\zeta))$ est fini~;
 \item[4--] Il y a seulement un nombre fini de mon\^omes standard~;
 \end{itemize}
Qui plus est, quand n'importe laquelle de ces conditions est
satisfaite, nous avons~:
 $$\mu(\zeta)=dim\, \C
 \{x-\zeta\}/LT_{<}(I\C(x-\zeta))= \textrm{nombre de 
   mon\^omes standard.}$$
\end{tm}
Dans le cas particulier d'un syst\`eme polynomial localis\'e, dont
quelque entier $d$ borne sup\'erieurement le degr\'e total des 
\'equations, $d^n$ constitue une  borne sup\'erieure pour la
multiplicit\'e de toute racine multiple.
\section{ $\,$Ce que contient cette \'etude}\label{sec2}
Approcher une racine isol\'ee multiple est difficile car la racine
peut \^etre r\'epulsive pour une m\'ethode de point fixe comme la
m\'ethode de Newton (voir l'exemple donn\'e par Griewank et
Osborne~\cite{GO83}, p. 752). Dans ce cas de racine isol\'ee, l'arsenal
des techniques d\'evelopp\'ees si le rang de la matrice jacobienne est
de rang constant ne s'applique pas.  Le cas de rang constant, qui
comprend respectivement les cas surjectif et injectif, est bien
analys\'e dans l'ouvrage de J.-P. Dedieu~\cite{dedieu06}. Il existe une
vaste litt\'erature sur ce sujet, voir par exemple les articles
\cite{xuli2008},~\cite{ArHi2011} et les
r\'ef\'erences qu'ils contiennent.\\

Afin de pallier cet inconv\'enient, nous construisons une suite finie
de syst\`emes \'equivalents, appel\'ee \emph{suite de d\'eflation},
telle que la mutiplicit\'e de la racine chute strictement entre deux
syst\`emes successifs. La racine devient ainsi r\'eguli\`ere lors du
dernier syst\`eme. Il suffit alors d'en extraire un syst\`eme carr\'e
r\'egulier pour obtenir ce que nous appelons \emph{syst\`eme
  d\'eflat\'e}. L'op\'erateur de Newton \emph{singulier} est juste
l'op\'erateur de Newton classique associ\'e au syst\`eme
d\'eflat\'e.\\

Comment construisons-nous cette suite de d\'eflation~? Commen\c cons par expliquer l'idée g\'en\'erale de cette construction quand la racine $\zeta$ est connue. Tout d'abord on remplace les équations par leur gradients tant que ceux-ci s'annulent en $\zeta$. On obtient ainsi un syst\`eme \'equivalent au syst\`eme initial avec la propri\'et\'e suivante : chaque ligne de sa matrice jacobienne
n'est pas identiquement nulle en $\zeta$. On appelle \emph{s\'election} cette op\'eration qui consiste \`a remplacer les \'equations par leurs d\'eriv\'ees \`a un certain ordre. Ensuite,
tant que la
matrice jacobienne n'est pas de rang plein en la racine, c'est qu'il
existe des relations entre les lignes (respectivement entre les
colonnes), qui sont donn\'ees par un compl\'ement de Schur.  Nous
appelons \emph{d\'enoyautage} l'op\'eration qui consiste \`a ajouter
\`a certaines \'equations initiales les \'el\'ements de ce compl\'ement
de Schur. Nous montrerons en section~\ref{Mult-drops-kerneling} qu'apr\`es les op\'erations de s\'election et de d\'enoyautage effectu\'ees sur un syst\`eme  singulier, nous obtenons un syst\`eme \'equivalent o\`u la multiplicit\'e
de la racine  a strictement chut\'e. La m\'ethode de d\'eflation consiste \`a  it\'erer cette
construction, c'est \`a dire \`a faire suivre une op\'eration de s\'election par une op\'eration de 
d\'enoyautage. Le nombre d'it\'erations n\'ecessaires pour obtenir un
syst\`eme r\'egulier est \emph{l'\'epaisseur} de la suite de d\'eflation. Un syt\`eme d\'eflat\'e du syst\`eme initial est un syst\`eme carr\'e extrait du syst\`eme 
r\'egulier obtenu par la m\'ethode de d\'eflation.
L'id\'ee de la m\'ethode est en fait assez naturelle. Pour s'en persuader, il suffit de se reporter \`a l'exemple 
illustr\'e en sous-section~\ref{exemple_exact}.
Pour conclure ce bref aper\c cu de la m\'ethode de d\'eflation, il faut souligner tout d'abord qu'il n'y a pas unicit\'e de la suite de d\'eflation. Ensuite que l'op\'eration de s\'election est en fait une succession d'op\'erations de d\'enoyautage dans le cas de nullit\'e du rang de la matrice jacobienne. Ceci sera d\'evelopp\'e en section~\ref{Kern-Sing-Newton}.

L'originalit\'e de cette \'etude consiste d'une part \`a d\'efinir 
 une m\'ethode de d\'eflation \`a partir d'un point $x_0$ proche de la racine $\zeta$ et d'autre part \`a donner une analyse num\'erique de cette m\'ethode.
Ceci constituera les sections~\ref{Kern-Sing-Newton} et~\ref{Mult-drops-kerneling}. De plus nous estimerons le rayon d'une boule centr\'ee  en $\zeta$
dans laquelle  le rang num\'erique de $Df(x_0)$ est \'egal au rang de $Df(\zeta)$ pour tout point $x_0$ de cette boule.

Le but \'etant d'effectuer l'analyse de cette suite de d\'eflation,
cette \'etude se place dans le contexte  des syst\`emes analytiques
constitu\'es de fonctions de carr\'e int\'egrable. Ainsi nous pouvons
repr\'esenter une fonction et ses d\'eriv\'ees par un noyau
reproduisant efficace~: le noyau  de Bergman. Ce cadre fonctionnel est
d\'ecrit en section~\ref{functional}.

Qui plus est, notre \'etude est \emph{libre de $\varepsilon$}
(quantit\'e-seuil qui mesure l'approximation num\'erique) dans le sens
suivant~:
\begin{defin}
  Un algorithme num\'erique est dit libre de $\varepsilon$ si les
  entr\'ees de cet algorithme ne comportent pas la variable
  $\varepsilon$.
\end{defin}
La construction d'une suite de d\'eflation pr\'esent\'ee dans la table~\ref{dfl_table} est libre de $\varepsilon$ sous l'hypoth\`ese que la
norme d\'efinie dans la section~\ref{functional} (ou \`a tout le
moins une borne sup\'erieure) soit donn\'ee. Pour cela nous
pr\'esentons des r\'esultats nouveaux afin de d\'eterminer via des
algorithmes libres de $\varepsilon$:
\begin{itemize} 
\item[1--] le rang num\'erique d'une matrice, dans la section~\ref{rank_sec}~;
\item[2--] la proximit\'e \`a z\'ero de l'application \'evaluation,
  voir la section~\ref{evaluation_sec}.
\end{itemize} 

Nous terminons cette \'etude par l'$\alpha$-th\'eorie de Smale de cet
op\'erateur de Newton singulier en utilisant l'analyse induite par le noyau
de Bergman.
Nous commen\c cons par donner en  section~\ref{sec_rouche_regulier} un $\alpha$-th\'eor\`eme, c'est \`a dire une condition de
l'existence d'une racine  obtenue gr\^ace au th\'eor\`eme de 
Rouch\'e. Toujours dans le cas r\'egulier, nous  \'etablissons ensuite  en section 
\ref{sec_NS_gamma}, un
$\gamma$-th\'eor\`eme, c'est-\`a-dire un r\'esultat qui exhibe le rayon d'une boule de
convergence quadratique pour l'op\'erateur de Newton. 
L'analyse du cas singulier est l'application au syst\`eme d\'eflat\'e des  $\alpha$-th\'eor\`eme et $\gamma$-th\'eor\`eme respectivement obtenus
dans le cas r\'egulier. Ceci est r\'ealis\'e en section~\ref{sec_alpha_gamma}.
Les r\'esultats de cette section  d\'ependent de l'estimation de la
quantit\'e $\gamma$ du syst\`eme d\'eflat\'e. En notant par $\gamma_0$ (respectivement, $\gamma_\ell$) la quantit\'e $\gamma$ en la racine du syst\`eme initial (respectivement, du syst\`eme d\'eflat\'e d'\'epaisseur $\ell$)
d\'efinie en~(\ref{eq_gammafx}),
nous montrons, voir le th\'eor\`eme~\ref{estim_gammal}, que l'in\'egalit\'e
$$\gamma_\ell\le \ell+\gamma_0$$
subsiste dans une boule centr\'ee en la racine dont le rayon
est proportionnel \`a l'inverse du carr\'e de l'\'epaisseur.

Cet article peut \^etre vu comme une g\'en\'eralisation de
G-Lecerf-Salvy-Y~\cite{GLSY07}. Sous les hypoth\`eses
suppl\'ementaires d'un syst\`eme carr\'e ($s=n$) et d'une racine
multiple de dimension de plongement un (c'est-\`a-dire le rang de la matrice
jacobienne chute num\'eriquement de un), nous traitions le cas des
grappes de racines en utilisant num\'eriquement le th\'eor\`eme des
fonctions implicites. Plus pr\'ecis\'ement il existe une fonction
analytique $\varphi(x_1,\ldots,x_{n-1})$ telle que
$\zeta_n=\varphi(\zeta_1,\ldots,\zeta_{n-1})$ et donc $\zeta_n$ est
une racine de la fonction \`a une variable
$h(x_n)=f_n(\varphi(x_1,\ldots,x_{n-1}),x_n)$. En
appliquant~\cite{GLSY05} \`a la fonction $h(x_n)$, nous pouvons en
d\'eduire \`a la fois la multiplicit\'e de $\zeta_n$ et un algorithme
approximant rapidement la racine $\zeta_n$. Remarquons que ce
r\'esultat inclut le cas des z\'eros ``simples doubles'' \'etudi\'e
pr\'ec\'edemment par Dedieu et Shub~\cite{DS00}.
\section{ $\,$Relation avec d'autres travaux}
 
Le cas d'une variable et une \'equation a \'et\'e \'etudi\'e
intensivement~; la g\'en\'eralisation de l'op\'erateur classique de
Newton est due \`a Schr\"oder (\cite{schroder1870}, page
324). L'$\alpha$-th\'eorie est faite dans~\cite{GLSY05}, avec des
citations s\'electionn\'ees.\\

Le cas g\'en\'eral a \'et\'e \'etudi\'e soit d'un point de vue
purement symbolique soit d'un point de vue num\'erique. Nous n'allons
pas traiter le cas uniquement symbolique, en nous r\'ef\'erant par
exemple au livre de Cox, Little, O'Shea~\cite{cox05} pour les
  notions fondamentales et \`a l'article de Lecerf~\cite{lecerf02} pour
  la d\'eflation (voir le paragraphe consacr\'e \`a Ojika plus bas).\\

Un des pionniers de l'approche num\'erique est Rall~\cite{rall66}. Il
traite le cas particulier o\`u la racine multiple $\zeta$ satisfait
l'hypoth\`ese suivante~: il existe un indice $m$, d\'efini comme la
multiplicit\'e de $\zeta$, tel que la suite d'espaces construits
it\'erativement \`a partir de $N_1=\Ker Df(\zeta) $ par
$$N_{k+1}=N_k\cap \Ker Df^{k+1}(\zeta),\quad k=1:m-1$$ 
aboutisse \`a $N_m=\{0\}$. Il est alors possible de construire
it\'erativement un op\'erateur qui retrouve la convergence quadratique
locale. L'id\'ee consiste \`a projeter it\'erativement l'erreur
$x_0-\zeta$ sur les noyaux $N_k$ et leur orthogonal $N_k^\perp$.\\

Au m\^eme moment, l'id\'ee d'utiliser une variante de la m\'ethode de
Gauss-Newton afin d'approximer une racine multiple a \'et\'e examin\'e
par
Shamanskii~\cite{shamanskii67}. Mais l'algorithme ne converge quadratiquement vers la racine singuli\`ere que sous des hypoth\`eses tr\`es particuli\`eres.\\

D'autres techniques, dites d'extension, ont \'et\'e \'etudi\'ees, o\`u quelques hypoth\`eses sont faites sur la racine singuli\`ere. Par exemple si l'op\'erateur induit par la projection de $\Ker Df(\zeta)$ dans $\Ker (Df(\zeta)^*)^\perp$:
$$\dis~\pi_{(\Ker Df(\zeta)^*)^\perp}D^2f(\zeta)(z,\pi_{\Ker Df(\zeta)})$$
est inversible, alors $(\zeta,0)$ devient une racine r\'eguli\`ere d'un nouveau syst\`eme, dit \'etendu, poss\'edant $2n-r$ variables. Le syst\`eme \'etendu est b\^ati \`a partir du syst\`eme initial et d'une d\'ecomposition en valeurs singuli\`eres de la matrice jacobienne. Cette voie est d\'evelopp\'ee par Shen et Ypma~\cite{ShenYpma2005} et g\'en\'eralise une technique d'extension utilis\'ee par   Griewank~\cite{G85} dans le cas o\`u la chute de rang n'est que de un. Au d\'ebut des ann\'ees $60$ une s\'erie de papiers traitent purement num\'eriquement de l'approximation des racines multiples \`a l'aide de techniques semblables, voir~\cite{reddien1978},~\cite{reddien1979},~\cite{DK180},~\cite{DK280},~\cite{GO81},~\cite{DKK83},~\cite{kelleysuresh1983},~\cite{yamamoto1983}. Mais ni la g\'eom\'etrie du probl\`eme ni la notion de multiplicit\'e n'y sont introduites.\\

Ojika dans~\cite{ojika87} propose une m\'ethode appel\'ee de
\emph{d\'eflation} pour d\'eriver un syst\`eme r\'egulier d'un
singulier, m\^elant calculs symboliques et num\'eriques. C'est une
g\'en\'eralisation d'un algorithme pr\'ec\'edemment d\'evelopp\'e
dans~\cite{OWM83}. Cette recherche d'un syst\`eme r\'egulier
\'equivalent fait intervenir une \'elimination de Gauss mais aucune analyse n'en est donn\'ee, en particulier il n'y a aucune d\'etermination du rang num\'erique ni de relation avec le concept de multiplicit\'e.\\

Dans le cas
particulier important de la localisation d'un syst\`eme polynomial et
dans un esprit purement symbolique, Lecerf dans~\cite{lecerf02}
reprend cet algorithme de d\'eflation qui rend un syst\`eme r\'egulier
\emph{triangulaire}, avec une complexit\'e arithm\'etique dans~:
$$\mathcal{O}\left
  (n^3(nL+n^\Omega)\mu(\zeta)^2\log(n\;\mu(\zeta))\right )$$ o\`u $n$
est le nombre de variables, $\mu(\zeta)$ la multiplicit\'e, $3\le
\Omega<4$ et $L$ est la longueur d'un calcul d'\'evaluation du syst\`eme.\\

Leykin, Verschelde et Zhao exhibent dans~\cite{lvz06} une m\'ethode m\^elant d\'eflation et extension, fond\'ee sur l'observation suivante~: si le rang num\'erique est $r$, il existe une solution isol\'ee $(\zeta,\delta)\in \C^n\times\C^{r+1}$ du syst\`eme
  \begin{equation}\label{dfl_versch}
  Df(x)B\delta=0,\quad \delta^*h-1=0,
\end{equation}  
o\`u $B\in \C^{n\times (r+1)}$ et $h\in\C^{r+1}$ sont choisis al\'eatoirement. La multiplicit\'e de la  racine $(\zeta,\delta )$ du syst\`eme d\'eflat\'e et \'etendu chute strictement. Un pas de la m\'ethode consiste alors \`a ajouter les \'equations 
~(\ref{dfl_versch}). Ceci implique \`a chaque pas dans le pire des cas
un doublement du nombre des variables et des  \'equations. De plus la
d\'etermination du rang num\'erique, reposant sur un travail de
Fierro-Hansen~\cite{fierro_hansen_05}, n'est pas libre de
$\varepsilon$. Leur th\'eor\`eme affirme alors qu'il suffit
d'ex\'ecuter $\mu(\zeta)-1$ pas pour arriver \`a un syst\`eme
r\'egulier.\\

Les papiers de Dayton-Zeng~\cite{DZ05}, Dayton-Li-Zeng~\cite{DLZ11} et
Nan Li-Lihong Zhi~\cite{li2014} rel\`event de la m\^eme veine et
traitent le cas polynomial  puis analytique.  Plus r\'ecemment des cas
particuliers ont \'et\'e \'etudi\'ees par Nan Li et Lihong Zhi dans
plusieurs travaux~\cite{li121},~\cite{li2012}. Mais toutes ces
contributions ne fournisent qu'une analyse num\'erique superficielle
de leur algorithme.\\

La dualit\'e et le rapport avec les matrices de Macaulay constituent le c{\oe}ur th\'eorique des travaux de Mourrain~\cite{Mou97}, Mantzaflaris et Mourrain~\cite{MM11}, ou plus r\'ecemment de Hauenstein, Mourrain, Szanto~\cite{hauenstein_mourrain_szanto_16}.
Dans ce dernier travail ils proposent quand la racine est connue
et dans un contexte purement symbolique, un nouvel algorithme pour
d\'eterminer un syst\`eme r\'egulier \`a partir du syst\`eme initial.
Le principe est de param\'etrer les matrices de multiplication :  le syst\`eme r\'egulier obtenu poss\`ede  $\dis N+\frac{n(+1)}{2}$ \'equations et  $\dis \frac{n \delta( \delta-1)}{2} $ variables, o\`u $N$ est le nombre d'\'equations du syst\`eme initial,
$n$ le nombre de variables et $\delta$ le cardinal d'une base de l'anneau local.
Par  ce biais  
ils \'etudient \'egalement une m\'ethode proche de la notre
toujours en supposant connue  la racine singuli\`ere : ils ajoutent  les relations entre les colonnes des matrices jacobiennes de rang d\'efectif. 

 \section{ $\,$D\'etermination du rang num\'erique d'une matrice}\label{rank_sec}

Soient $s \geq n$ deux entiers, et $M$ une $s\times n$-matrice \`a
coefficients complexes, $U\Sigma V^*$ une d\'ecomposition en valeurs
singuli\`eres $\sigma_1\ge \ldots \ge \sigma_n$ de $M$.\\

Nous consid\'erons les fonctions sym\'etriques \'el\'ementaires des
$\sigma_i$~:

$$s_{k}=\sum_{1\le i_1<\ldots <i_k\le n}
\sigma_{i_1}\ldots\sigma_{i_k},\quad k=1:n.$$
\index{$s_k$}
En d'autres termes, les valeurs singuli\`eres sont les racines du
polyn\^ome $s(\lambda)$ de degr\'e $n$~:

$$ s(\lambda) = \prod_{i=1}^n (\lambda - \sigma_i) = \lambda ^n + \sum_{i=n-1}^{0} (-1)^{(n-i)} s_{n-i} \lambda ^i .$$
\index{$s(\lambda)$}
Par convention $s_0=1$~; remarquons que cette convention est
naturelle, en ce qu'elle autorise le traitement du cas o\`u toutes les
valeurs singuli\`eres sont nulles, ce qui signifie que la matrice $M$
est nulle et donc que le rang l'est aussi.\\

Notre analyse du rang num\'erique est fond\'ee sur l'introduction des quantit\'es d\'efinies ci-dessous.
\begin{defin} \label{bkgkak}
 \quad\\
\begin{itemize}
\item[1--] $\dis b_{k}(M):=\sup_{0\le i\le k-1}
\left (\frac{s_{n-i}}{s_{n-k}}\right )^{\frac{1}{k-i}}$~
\index{$b_k(M)$},  $k=1:n$;
\vspace{0.2cm}	

\item[2--] $\dis g_{k}(M):=\sup_{k+1\le i\le n} 
\left (\frac{s_{n-i}}{s_{n-k}}\right )^{\frac{1}{i-k}}$~\index{$g_k(M)$},  $k=1:n-1$ et $g_n(M)=1$;
\vspace{0.3cm}

\item[3--] $\dis a_{k}(M):=b_{k}(M)\, g_{k}(M)$\index{$a_k(M)$},  $k=1:n$.
\end{itemize}
\end{defin}
\begin{req}
Les quantit\'es $b_k$, $g_k$ et $a_k$ sont un cas particulier
de celles introduites dans ~\cite{GLSY05} page 261.
Le choix de noter $b_k$  plut\^ot que 
$b_{n-k}$ est justifi\'e par l'identit\'e
$\dis\frac{s_n}{s_{n-1}}~=~\frac{s(0)}{s'(0)}$ obtenu pour $k=1$ car ce rapport est
 not\'e $\beta_1$ dans ~\cite{GLSY05}.
\end{req}
En fait la d\'etermination des $b_k(M)$ et $g_k(M)$ est
donn\'ee par le r\'esultat ci-dessous.
 \begin{pp}
 On a :
\begin{itemize}
\item[1--] $\dis b_{k}(M)=
\frac{s_{n-k+1}}{s_{n-k}}$, $k=1:n$;
\vspace{0.2cm}
\item[2--] $\dis g_{k}(M)=
\frac{s_{n-k-1}}{s_{n-k}}$, $k=1:n-1$.
\end{itemize}
\end{pp}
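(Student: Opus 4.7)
Le plan est d'exploiter la log-concavit\'e des polyn\^omes sym\'etriques \'el\'ementaires en les valeurs singuli\`eres $\sigma_1 \ge \cdots \ge \sigma_n \ge 0$ de $M$. Les in\'egalit\'es de Newton donnent, pour toute famille finie de r\'eels positifs, $s_m^2 \ge s_{m-1}\, s_{m+1}$ pour $1 \le m \le n-1$ (forme affaiblie obtenue en n\'egligeant les coefficients binomiaux, toujours sup\'erieurs ou \'egaux \`a $1$). En supposant les $s_m$ non nuls, il en d\'ecoule que la suite des rapports $\rho_m := s_m/s_{m-1}$ est d\'ecroissante en $m$. C'est le seul ingr\'edient analytique n\'ecessaire.

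Pour le premier point, je reparam\'etrerais le supremum en posant $j = k-i \in \{1,\ldots,k\}$, de sorte que
\[
b_k(M) = \sup_{1 \le j \le k} \left(\frac{s_{n-k+j}}{s_{n-k}}\right)^{1/j}.
\]
Par t\'elescopage,
\[
\frac{s_{n-k+j}}{s_{n-k}} \;=\; \prod_{l=1}^{j} \frac{s_{n-k+l}}{s_{n-k+l-1}} \;=\; \prod_{l=1}^{j} \rho_{n-k+l}.
\]
La d\'ecroissance de $\rho$ donne $\rho_{n-k+l} \le \rho_{n-k+1}$ pour tout $l \ge 1$, d'o\`u la majoration du produit par $\rho_{n-k+1}^{\,j} = (s_{n-k+1}/s_{n-k})^{j}$. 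Prenant la racine $j$-\`eme, on conclut que $(s_{n-k+j}/s_{n-k})^{1/j} \le s_{n-k+1}/s_{n-k}$, avec \'egalit\'e pour $j=1$ (c'est-\`a-dire $i=k-1$), ce qui fournit la formule annonc\'ee pour $b_k$.

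Le second point est sym\'etrique. En posant cette fois $j = i-k \in \{1,\ldots,n-k\}$, on \'ecrit
\[
\frac{s_{n-k-j}}{s_{n-k}} \;=\; \prod_{l=1}^{j} \frac{s_{n-k-l}}{s_{n-k-l+1}} \;=\; \prod_{l=1}^{j} \rho_{n-k-l+1}^{-1}.
\]
Pour $l \ge 1$ on a $n-k-l+1 \le n-k$, et la d\'ecroissance de $\rho$ donne $\rho_{n-k-l+1} \ge \rho_{n-k}$, donc $\rho_{n-k-l+1}^{-1} \le \rho_{n-k}^{-1} = s_{n-k-1}/s_{n-k}$. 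Le produit est ainsi major\'e par $(s_{n-k-1}/s_{n-k})^{j}$, avec \'egalit\'e pour $l=1$, soit $j=1$ (c'est-\`a-dire $i=k+1$). On en tire $g_k(M) = s_{n-k-1}/s_{n-k}$.

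Le principal point d'attention, assez mineur, r\'eside dans le maniement des deux sens d'indexation et dans l'identification correcte du terme extremal $j=1$ o\`u l'\'egalit\'e est atteinte. On suppose implicitement non nuls les $s_m$ intervenant au d\'enominateur; les cas d\'eg\'en\'er\'es (par exemple $M=0$, o\`u tous les $s_m$ sont nuls pour $m \ge 1$) se traitent \`a part gr\^ace \`a la convention $s_0 = 1$ rappel\'ee juste avant l'\'enonc\'e.
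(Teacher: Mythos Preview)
Your proof is correct and follows essentially the same route as the paper's: both derive the log-concavity inequality $s_m^2 \ge s_{m-1}s_{m+1}$ (the paper via a theorem from \cite{yak90}, you via the classical Newton inequalities), then use the resulting monotonicity of the ratios $s_m/s_{m-1}$ together with a telescoping product to identify the term where the supremum is attained. The only cosmetic difference is your explicit change of index $j=k-i$ (resp.\ $j=i-k$), which the paper leaves implicit.
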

\begin{proof}
C'est une cons\'equence du th\'eor\`eme 5.2 de ~\cite{yak90} qui \'enonce que :
\\
\textit{Soient $r_0$ et $r_1$ tels que $nr_1-(n-1)r_0\ge 0$. Nous consid\'erons la suite $r_k=kr_1-(k-1)r_0$ pour $k\ge 2$. Tout polyn\^ome $
\dis f(x)=\sum_{k=0}^na_{n-k}x^{k}$ qui n'a que des racines r\'eelles
v\'erifie :
$$\frac{r_{n-k}}{r_{n-k-1}}\frac{k}{k+1}a_{n-k}^2-a_{n-k-1}a_{n-k+1}\ge 0,\quad k=1:n-1.$$
}
Avec $r_0=n$ et $r_1=n-1$ nous avons $r_i=n-i$. Les coefficients de $s(\lambda)$
v\'erifient donc :
$$\frac{i^2}{(i+1)^2}s_{n-i}^2-s_{n-i-1}s_{n-i+1}\ge 0,
\quad i=1:n-1.$$
Il s'ensuit que $s_{n-i}^2-s_{n-i-1}s_{n-i+1}\ge 0,\quad i=1:n-1.$ C'est \`a dire
$$\frac{s_{n-i-1}}{s_{n-i}}\le \frac{s_{n-i}}{s_{n-i+1}}
,\quad i=1:n-1.$$

Pour $k=1:n$ et $i=0:k-1$ il vient
$$\frac{s_{n-i}}{s_{n-k}}=\frac{s_{n-i}}{s_{n-i-1}}\frac{s_{n-i-1}}{s_{n-i-2}}\ldots
\frac{s_{n-k+1}}{s_{n-k}}\le \left(\frac{s_{n-k+1}}{s_{n-k}}\right)^{ k-i}.$$
Donc $\dis b_k(M)=\frac{s_{n-k+1}}{s_{n-k}}$. De la m\^eme fa\c con
nous obtenons la deuxi\`eme partie.
\end{proof} 
Nous allons pr\'eciser la notion de $\varepsilon$-rang que nous
utiliserons dans la suite.

\begin{defin}\label{defi_numerical_rank}
Soit $\varepsilon$ un nombre positif ou nul. Une matrice $M$ a un
$\varepsilon$-rang\index{$\varepsilon$-rang} \'egal \`a $r_{\varepsilon}$
si ses valeurs singuli\`eres v\'erifient~:
\begin{equation}\label{test_rank}
\sigma_1\ge \ldots \ge \sigma_{r_{\varepsilon}}>\varepsilon\ge \sigma_{r_{\varepsilon}+1}
\ge\ldots \ge \sigma_n.
\end{equation}
\end{defin}

Observons que le $\varepsilon$-rang est born\'e sup\'erieurement par le
rang $r$ lui-m\^eme.\\

Soit $\Sigma_\varepsilon$ la matrice obtenue \`a partir de $\Sigma$ en
mettant les $\sigma_ {r+1}, \ldots , \sigma_n$ \`a $0$. D\'efinissons la matrice
$M_\varepsilon $ comme $U\Sigma_\varepsilon V^*$.
\begin{req}
Si le rang de $M$ est au moins $r$, nous savons que $M_\varepsilon$
\index{$M_\varepsilon$} est la matrice de rang $r$ la plus proche de $M$.
\end{req}

\begin{req}
La d\'efinition~\ref{defi_numerical_rank} est justifi\'ee par le th\'eor\`eme de 
Eckart-Young-Mirsky~\cite{EY36}, \cite{mirsky60} qui poss\`ede une
longue histoire en th\'eorie de l'approximation de rang faible (voir
Markovsky~\cite{markovsky2011} pour des d\'eveloppements r\'ecents).
\end{req}
Par simplicit\'e nous noterons $a_k$, $b_k$, $g_k$ les valeurs
correspondantes $a_k(M)$, $b_k(M)$, $g_k(M)$.
\begin{tm}\label{mroots_tm}
Consid\'erons le polyn\^ome $s(\lambda)$ d\'efini pr\'ec\'edemment.
\begin{itemize}
 \item[1--]
S'il existe un entier $m$, compris entre $1$ et $n$, avec $a_{m}<1/9$,
alors le polyn\^ome $s(\lambda)$ poss\`ede $m$ racines dans la boule $B(0,
\varepsilon)$, o\`u~:
$$\varepsilon :=\frac{3a_{m}+1-\sqrt{(3a_{m}+1)^2-16a_{m}}}{4g_{m}}~;$$
 \item[2--] Si $a_1>1/9$ alors $\dis  \sigma_n>\frac{1}{10 g_m}$
o\`u $m$ est l'entier satisfaisant $s_{n-k}=0$ , $k=1:m-1$ et
$s_{n-m}\ne 0$.  
\end{itemize}
\end{tm}

\begin{proof} Prouvons la premi\`ere des assertions.
Comme $a_m<1/9$, la quantit\'e $s_{n-m}$ n'est pas nulle car elle
est strictement positive. Consid\'erons les polyn\^omes 
$$p(\lambda)=\frac{1}{s_{n-m}}s(\lambda)=\frac{1}{s_{n-m}}\prod_{i=1 }^n(\lambda-\sigma_i)=
\sum_{i=0}^{n}(-1)^{n-i}\frac{s_{n-i}}{s_{n-m} }\lambda^{i}$$
\index{$p(\lambda)$}
et
$$q(\lambda)= \sum_{i=m}^{n}(-1)^{n-i}\frac{s_{n-i}}{s_{n-m}}\lambda^{i}.$$
\index{$q(\lambda)$}
\begin{lm}\label{q}
Posons $t:= g_m |\lambda|$.	
\index{$t$} Alors pour tout $\lambda$ tel que $|\lambda| <
1/g_m$, donc pour tout $t<1$:
$$ |q(\lambda)| \ge |\lambda|^m \frac{1-2t}{1-t} $$
\end{lm}
\begin{proof}
\begin{align}
|q(\lambda)|&=\left |
\lambda^m+ \sum_{i=m+1}^{n}(-1)^{n-i}\frac{s_{n-i}}{s_{n-m}}\lambda^{i}
\right |\nonumber
 \\&\ge 
 |\lambda|^m- \sum_{i=m+1}^{n}\frac{s_{n-i}}{s_{n-m} } |\lambda|^{i} \nonumber
 \\
 &
\ge |\lambda|^m\left (1- \sum_{i=m+1}^{n}\frac{s_{n-i}}{s_{n-m} } |\lambda|^{i-m}\right )
 \nonumber
 \\
 &
\ge |\lambda|^m\left (1- \sum_{i\ge m+1 } (g_m |\lambda|)^{i-m} \right )
\nonumber\\
&
\ge |\lambda|^m\frac{1-2g_m|\lambda|}{1-g_m|\lambda|}. 
\end{align}
\end{proof}
Nous prouvons d'abord que $0$ est la seule racine de $q(\lambda)$
dans la boule ouverte $\dis B\left (0,\frac{1}{2g_m}\right )$.
Soit $\nu$ une racine non nulle de $q(\lambda)$. 
Alors nous avons par le lemme \ref{q} $$0=q(\nu)=|q(\nu)| \ge |\nu|^m \frac{1-2g_m|\nu|}{1-g_m|\nu|}.$$
Donc $\dis |\nu | \ge \frac{1}{2g_m}$.
\\
Consid\'erons le trin\^ome 
\begin{equation}\label{pol2}
2t^2-(3a_m+1)t+2a_m.
\end{equation}
Si $a_m <1/9$, alors ce trin\^ome a deux racines r\'eelles $t_1
<t_2$, car le discriminant 
$$\Delta = (3a_m+1)^2 - 16a_m = 9a_m{^2} - 10a_m +1 = (9a_m -
1)(a_m-1)$$ 
est strictement positif. Nous pouvons v\'erifier explicitement que
$t_1$ est strictement positif, puisque ceci se ram\`ene \`a $a_m$
strictement positif.

Nous prouvons que pour $|\lambda|$ satisfaisant
$\dis\frac{t_1}{g_m} \le |\lambda| < \frac{1}{2g_m}$, $p(\lambda)$
a $m$ racines, compt\'ees avec multiplicit\'e, dans la boule ouverte
$B(0,|\lambda|)$ (notons que la longueur de l'intervalle o\`u
$|\lambda|$ vit est strictement positif, puisque $t_1 < 1/2$).
Afin d'\'etablir ce fait, nous allons v\'erifier que l'in\'egalit\'e de Rouch\'e
\begin{equation}\label{eq_rouche_rank}
|p(\lambda)-q(\lambda)|<|q(\lambda)|
\end{equation}
est v\'erifi\'ee sur la sph\`ere de rayon $|\lambda|$.
Nous avons 
 \begin{align}\label{eq_rouche_rank_g}
 |p(\lambda)-q(\lambda)|&\le 
 \sum_{i=0 }^{m-1}\frac{s_{n-i}}{s_{n-m} }|\lambda|^{i}
 \nonumber
 \\
 &
\le \sum_{i=0 }^{m-1} b_m^{m-i}|\lambda|^{i}
\nonumber\\
 &
 \le |\lambda|^m\frac{b_m/|\lambda|}{1-b_m/|\lambda|}
\nonumber \\
 &
 \le \frac{a_m}{g_m|\lambda|-a_m}|\lambda|^m.
 \end{align}
Nous v\'erifions que $\dis t-a_m > t_1 - a_m = \frac{-a_m +1
   -\sqrt{\Delta}}{4}$ est strictement positif si $a_m<1/9$.\\

De~(\ref{eq_rouche_rank_g}) et du lemme~\ref{q}, nous voyons que
l'in\'egalit\'e de Rouch\'e est satisfaite si
 $$\frac{a_m}{t-a_m}|\lambda|^m
< \frac{1-2t}{1-t}|\lambda|^m.$$
Comme $|\lambda|$, $1-t$ et $t-a_m$ sont strictement positifs,
cette in\'egalit\'e est \'equivalente au trin\^ome (\ref{pol2}) n\'egatif, ce qui
est assur\'e par la condition $a_m<1/9$.\\
Donc sous la condition $a_m<1/9$ le polyn\^ome $p(\lambda)$ a exactement
$m$ racines compt\'ees avec multiplicit\'e dans la boule ouverte
$B(0,|\lambda|)$ o\`u $$ \varepsilon:=\frac{t_1}{g_m}\le |\lambda| <
\frac{1}{2g_m}.$$ 
Par cons\'equent nous avons 
$$\sigma_1\ge\ldots \ge \sigma_{n-m}>\varepsilon\ge \sigma_{n-m+1}\ge \ldots\ge \sigma_n.$$
Prouvons maintenant l'assertion $2$. De l'hypoth\`ese nous d\'eduisons
  que $s_n\ne 0$ puisque $a_1>1/9$. Le polyn\^ome $s(\lambda)$ s'\'ecrit 
$$s(\lambda)=s_n+(-1)^{n-m}s_{n-m}\lambda^m+\ldots -s_{n-1}\lambda^{n-1}+\lambda^n$$
avec $s_{n-m}\ne 0$. Nous avons~:
\begin{align*}
 \left |\frac{s(\lambda)}{s_{n-m}}\right |&\ge  \frac{s_n}{s_{n-m}}-\sum_{k=m}^n\frac{s_{n-k}}{s_{n-m}}|\lambda |^k
\\&
\ge b_m^m-|\lambda |^m\sum_{k=m}^n(g_m|\lambda |)^{k-m}
\\&
\ge b_m^m-\frac{|\lambda |^m}{1-g_m|\lambda |}
\\&
> \frac{1}{(9g_m)^m}-\frac{10}{(9(10g_m)^m}\qquad\textsf{puisque
$9b_mg_m\ge 1$ et pour $|\lambda |$ tel que $10|\lambda |g_m<1$}
\\
&>\frac{10^m-10\times 9^{m-1}}{9\times (90g_m)^m}
\\&
>0.
\end{align*}
Donc le polyn\^ome n'a pas de racine dans la boule $\dis B\left (
0,\frac{1}{10g_m}\right)$. Nous en concluons que $\sigma_n>\frac{1}{10g_m}.$	
\end{proof}
Une cons\'equence du th\'eor\`eme~\ref{mroots_tm} suit~:

\begin{tm}\label{rank_tm}
Soit $M$ une matrice.
\begin{itemize}
\item[1--] S'il existe un entier $m'$ ($1\le m' \le n$) avec
  $a_{m'}<1/9$, 
  soit $m$ le plus petit des entiers
    compris entre $1$ et $n$ avec $a_m<1/9$.
Alors la matrice $M$ a un $\varepsilon$-rang $n-m$ avec 
$$\varepsilon=\frac{3a_{m}+1-\sqrt{(3a_{m} +1)^2-16a_{m}}}{4g_{m}}.$$
 \item[2--] Si $a_1 >1/9$ alors le $\epsilon$-rang de la matrice $M$
   ($\epsilon=\frac{1}{10g_m}$) est $n$ o\`u $m$ est
   l'entier d\'efini dans l'assertion 2 du th\'eor\`eme~\ref{mroots_tm}.
\end{itemize}
\end{tm}
 \begin{tm}
L'algorithme de la table~\ref{table_rank} calcule le
$\varepsilon$-rang d'une matrice gr\^ace au th\'eor\`eme~\ref{rank_tm}. 
 \end{tm}
\begin{req}
En fait cet algorithme est libre de $\varepsilon$ et nous appellerons
le $\varepsilon$-rang ainsi calcul\'e le rang \emph{num\'erique} de la matrice.
\end{req}
  \begin{table}
 $$\fbox{
 \begin{minipage}{1\textwidth }
$$
\textbf{Rang num\'erique}$$
\begin{enumerate}[1-]
\item \quad Entr\'ee~: une matrice $M\in \C^{s\times n}$, $s\ge n$
\item \quad Calculer une approximation des valeurs singuli\`eres de $M$ : $\sigma_1\ge\ldots
  \ge\sigma_n$
\item  \quad De ces $\sigma_i$, calculer les quantit\'es $a_k$, $
  k=1:n$ et $g_k$ d\'efinis dans la section~\ref{rank_sec}
\item \quad S'il existe un $m' \ge 1$ tel que $a_{m'}<1/9$,
  soit $m$ le plus petit des entiers 
    compris entre $1$ et $n$ avec $a_m<1/9$. D\'efinissons
\\
\item \quad \quad\quad $\dis \varepsilon:=\frac{3a_m+1-\sqrt{(3a_m+1)^2-16a_m}}{4g_m}$
\item \quad \quad\quad Le $\varepsilon$-rang de la matrice $M$ est
  $n-m$, \quad\quad\textsf{de l'assertion 1 du th\'eor\`eme~\ref{rank_tm}}
\item \quad sinon
\item \quad \quad\quad $\varepsilon<\sigma_n$. Le $\varepsilon$-rang
  de la matrice $M$ est $n$,
 \\\qquad\qquad o\`u $\epsilon=\frac{1}{10g_m}$ comme dans
   l'assertion 2
   du th\'eor\`eme~\ref{rank_tm}
\item  \quad fin si
\item \quad Sortie~: le $\varepsilon$-rang de la matrice $M$
\end{enumerate}
   \end{minipage}
 }
 $$
 \caption{}\label{table_rank}
 \end{table} 
 
\section{ $\,$Le cadre fonctionnel}\label{functional}
Soient $n\ge 2$, $R_\omega\ge0$\index{$R_\omega$}
et $\omega\in \C^n$. \index{$\omega$} Nous consid\'erons l'ensemble
$\A2(\omega,R_{\omega})$
\index{$\A2(\omega,R_{\omega})$} des fonctions analytiques de carr\'e
int\'egrable dans la boule ouverte $B(\omega,R_{\omega})$. C'est un
espace de Hilbert \'equip\'e du produit int\'erieur 
$$<f,g> =\frac{c_n}{R_\omega^{2n}}
\int_{B(\omega,R_{\omega})}f(z)\overline{g(z)} dz,$$
o\`u $\dis c_n=\frac{n!}{\pi^n}$. Nous normalisons ce produit hermitien
en divisant l'int\'egrale par le volume de la boule $B(\omega,R_{\omega})\subset \C^n$. 
\\
Ensuite nous munissons $(\A2(\omega,R_{\omega}))^s$ d'une structure
hermitienne via le produit int\'erieur
$$<f,g>=\sum_{i=1}^s<f_i,g_i>.$$
Par simplicit\'e nous noterons $||f||$\index{$"|"|f"|"|$} la norme associ\'ee,
indiff\'eremment dans $\A2(\omega,R_{\omega})$ ou dans
$(\A2(\omega,R_{\omega}))^s$. De m\^eme, nous utilisons
la m\^eme notation $||.||$  pour la norme euclidienne de $\C$ ou $\C^s$.\\

Observons que ce cadre inclut le cas d'un syst\`eme analytique obtenu en
localisant un syst\`eme polynomial.
\subsection{Le noyau de Bergman}
Pour des r\'ef\'erences de base nous renvoyons \`a W.~Rudin~\cite{rudin08}
et S. G. Krantz~\cite{krantz13}.\\

Comme pour chaque $x\in B(\omega,R_{\omega})$ et chaque
$f\in\A2(\omega,R_{\omega})$ l'application \'evaluation $f\mapsto f(x)$
est une fonctionelle lin\'eaire continue $eval_x$ sur $\A2$, en
appliquant le th\'eor\`eme de repr\'esentation de Riesz, il existe un \'el\'ement $h_x \in
\A2$ tel que $$f(x)=eval_x(f)=<f,h_x>.$$\\
Posons $\dis \nu := x \mapsto \nu_x=\frac{\|x-\omega \|}{R_\omega}$.
\index{$\nu_x$}
\begin{defin}\label{bk}
La fonction $(z,x) \mapsto H(z,x):=\overline{h_x(z)}$\index{$H(z,x)$}
est appel\'ee le \emph{noyau de Bergman} et poss\`ede la propri\'et\'e
reproduisante~:
$$f(x)=\frac{c_n}{R_\omega^{2n}}\int_{B(\omega,R_{\omega})}f(z)\, H(z,x)\,dz,\quad \forall
f\in \A2(\omega,R_{\omega}).$$
Nous disons que le noyau de Bergman reproduit
$\A2(\omega,R_{\omega})$~; nous en \'enon\c cons quelques propri\'et\'es.
\end{defin}
\subsection{Propri\'et\'es}
\begin{pp}\label{berg_ker}
\quad
\begin{itemize}
\item[1--] $\dis H(z,x)=
\frac{1}{\dis (1-\frac{<z-\omega,x-\omega>}{R_\omega^2})^{n+1}}$;
\\
\item[2--]  $ \dis H(x,x)=\|H(\bullet,x)\|^2=
\frac{1}{(1-\nu_x^2)^{n+1}};$
\\
\item[3--] Pour tout $f\in \A2(\omega,R_{\omega})$ nous avons 
$$\dis |f(x)|=\frac{c_n}{R_\omega^{2n}}  \left |\int_{B(\omega,R_{\omega})}f(z)H(z,x)dz\right |
\le \frac{\|f\|}{(1-\nu_x^2)^\frac{n+1}{2}}.$$
\end{itemize}
\end{pp}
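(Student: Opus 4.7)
Le plan serait d'\'etablir d'abord la formule close du noyau (assertion 1), puis d'en d\'eduire les deux autres assertions par sp\'ecialisation et par l'in\'egalit\'e de Cauchy-Schwarz.

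Pour l'assertion 1, nous nous ram\`enerions au cas de la boule unit\'e $B(0,1)$ via le changement de variable $\tilde z = (z-\omega)/R_\omega$. Le choix de la normalisation (division par le volume dans la d\'efinition du produit int\'erieur) rend l'application $f\mapsto \tilde f$ avec $\tilde f(\tilde z)=f(\omega+R_\omega\tilde z)$ une isom\'etrie entre $\A2(\omega,R_\omega)$ et $\A2(0,1)$. Il suffit donc de calculer le noyau sur la boule unit\'e puis de le transporter par l'isom\'etrie inverse. Sur $B(0,1)$, nous construirions une base hilbertienne form\'ee des mon\^omes renormalis\'es $e_\alpha(z)=z^\alpha/\|z^\alpha\|$, $\alpha\in \N^n$~; un calcul en coordonn\'ees polaires complexes fournit $\|z^\alpha\|^2=\alpha!\,n!/(n+|\alpha|)!$. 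Le noyau se d\'eveloppe alors en $H_{\textrm{unit}}(z,x)=\sum_\alpha e_\alpha(z)\overline{e_\alpha(x)}$, et en regroupant les multi-indices par longueur $k=|\alpha|$ via le th\'eor\`eme multinomial, nous reconnaissons la s\'erie enti\`ere de $(1-t)^{-(n+1)}$ \'evalu\'ee en $t=\langle z,x\rangle$. En revenant aux variables initiales, nous aboutissons \`a la formule annonc\'ee.

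L'assertion 2 d\'ecoulerait imm\'ediatement de la propri\'et\'e reproduisante appliqu\'ee \`a la fonction $h_x$ elle-m\^eme~: $\|h_x\|^2=\langle h_x,h_x\rangle=h_x(x)=H(x,x)$, cette quantit\'e \'etant r\'eelle positive. La valeur explicite $1/(1-\nu_x^2)^{n+1}$ s'obtient en sp\'ecialisant l'assertion 1 \`a $z=x$, puisque $\langle x-\omega,x-\omega\rangle=\|x-\omega\|^2=R_\omega^2\nu_x^2$. Enfin, l'assertion 3 r\'esulte directement de l'in\'egalit\'e de Cauchy-Schwarz appliqu\'ee \`a la repr\'esentation $f(x)=\langle f,h_x\rangle$~: $|f(x)|\le \|f\|\cdot\|h_x\|=\|f\|\sqrt{H(x,x)}$, d'o\`u la majoration souhait\'ee par l'assertion 2.

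Le point d\'elicat est clairement l'assertion 1, plus pr\'ecis\'ement le calcul int\'egral des normes des mon\^omes et la resommation de la s\'erie multinomiale en forme close. Une fois cette formule acquise, les deux autres assertions n'en sont que des cons\'equences essentiellement formelles.
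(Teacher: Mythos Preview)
Your proposal is correct and follows the standard computation of the Bergman kernel on the ball (orthonormal monomial basis, explicit norm computation, multinomial resummation, then Cauchy--Schwarz). The paper itself gives no argument at all: its entire proof is a one-line reference to Theorem~3.1.3 in Rudin~\cite{rudin08}, which is precisely the computation you outline, so you have effectively supplied what the citation contains.
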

\begin{proof}
Voir Theorem 3.1.3. page 37 dans~\cite{rudin08}.
\end{proof}
La proposition ant\'ec\'edente se g\'en\'eralise aux d\'eriv\'ees d'ordre sup\'erieur.
  \begin{pp}\label{DkFH}
Soient $k\ge 0$, $\omega\in\C^n$, $x\in B(\omega,R_{\omega})$ et $u_i\in
\C^n$, $i=1:k$. Introduisons
$$H_k(z,x,u_1,\ldots, u_k)=
\frac{(n+1)\cdots (n+k)<z-\omega,u_1 >\cdots
<z-\omega, u_{k}>}{\dis R_\omega^{2k}\left (1-\frac{<z-\omega,x-\omega>}{R_\omega^2}
\right )^{k}}H(z,x).$$
Nous avons
\begin{itemize}
\item[1--] $\dis D^{k}f(x)(u_1,\cdots,u_k)= \frac{c_n}{R_\omega^{2n}}
\int_{B(\omega,R_{\omega})}f(z)\, H_k(z,x,u_1,\cdots, u_k)\, dz$ ; 
\\\\
\item[2--]  $\dis
\|D^kf(x)\|\le 
||f||\, 
\frac{(n+1)\cdots (n+k)}{ R_{\omega}^{k}\,(1-\nu_x^2)^{\frac{n+1}{2}+k}}$.
\end{itemize}
(\'evidemment si $k=0$ l'intervalle o\`u vit $i$ est vide, et les
produits\\
~$(n+1)\cdots (n+k)$ et $<z-\omega,u_1 >\cdots <z-\omega, u_{k}>$ sont
r\'eduits \`a $1$.)
\end{pp}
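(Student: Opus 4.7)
The plan is to derive both parts by differentiating the reproducing formula of Proposition~\ref{berg_ker} under the integral sign.

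For Part~1, I would first compute $\partial_{u} H(z,x)$ directly from the explicit formula. Writing $\phi(z,x) := 1 - \langle z-\omega, x-\omega\rangle/R_\omega^2$, so that $H(z,x) = \phi(z,x)^{-(n+1)}$, a short differentiation yields
\[
\partial_{u}\,H(z,x) \;=\; \frac{(n+1)\,\langle z-\omega,u\rangle}{R_\omega^{2}\,\phi(z,x)}\,H(z,x).
\]
Iterating this formula $k$ times produces exactly $H_k(z,x,u_1,\ldots,u_k)$. Differentiating the reproducing identity $f(x) = \frac{c_n}{R_\omega^{2n}}\int f(z) H(z,x)\,dz$ under the integral sign is legitimate because on any compact subset of $B(\omega,R_\omega)$, $|\phi(z,x)|$ is bounded below, so all derivatives appearing are continuous and uniformly dominated, and one applies the standard Leibniz theorem.

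For Part~2, I would apply the Cauchy-Schwarz inequality to the integral obtained in Part~1, viewed as an $\A2$ pairing between $f$ and the complex conjugate of $H_k(\cdot,x,u_1,\ldots,u_k)$:
\[
\bigl|D^k f(x)(u_1,\ldots,u_k)\bigr| \;\le\; \|f\|\,\bigl\|H_k(\cdot,x,u_1,\ldots,u_k)\bigr\|.
\]
The task then reduces to estimating the $\A2$-norm of $H_k$ in its first variable. Since $z\in B(\omega,R_\omega)$ forces $|\langle z-\omega,u_j\rangle|\le R_\omega\,\|u_j\|$, one obtains the pointwise bound
\[
|H_k(z,x,u_1,\ldots,u_k)|^2 \;\le\; \frac{\bigl((n+1)\cdots(n+k)\bigr)^2\prod_{j}\|u_j\|^2}{R_\omega^{2k}}\cdot\frac{1}{|\phi(z,x)|^{2(n+1+k)}},
\]
and everything hinges on evaluating $\dfrac{c_n}{R_\omega^{2n}}\int dz/|\phi(z,x)|^{2(n+1+k)}$.

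The hard part is this last integral. After the affine change of variables $z = \omega + R_\omega\zeta$, $x = \omega + R_\omega\xi$, it becomes $\int_{B(0,1)} d\zeta/|1-\langle\zeta,\xi\rangle|^{2(n+1+k)}$ on the unit ball, for which I would invoke the classical evaluation recorded in Rudin~\cite{rudin08} (cf.\ Proposition~1.4.10 there). That formula produces a factor $(1-\|\xi\|^2)^{-(n+1+2k)} = (1-\nu_x^2)^{-(n+1+2k)}$ multiplied by an explicit ratio of factorials which, after combination with $c_n=n!/\pi^n$, collapses to a constant bounded by $1$; taking square roots yields exactly the announced inequality. The case $k=0$ serves as a sanity check, since the integral formula then specializes to $\|H(\cdot,x)\|^2 = H(x,x) = (1-\nu_x^2)^{-(n+1)}$, consistent with Proposition~\ref{berg_ker}.
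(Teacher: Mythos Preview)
Your overall strategy matches the paper's: Part~1 by differentiating the reproducing formula (the paper phrases it as an induction on $k$, but the content is identical to your iteration), and Part~2 by Cauchy--Schwarz followed by the pointwise bound $|\langle z-\omega,u_j\rangle|\le R_\omega\|u_j\|$, reducing everything to the single integral
\[
I_k=\frac{c_n}{R_\omega^{2n}}\int_{B(\omega,R_\omega)}\frac{dz}{|\phi(z,x)|^{2(n+1+k)}}.
\]
The paper isolates this reduction as a separate lemma (Lemme~\ref{HK}).

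Where you diverge is in the evaluation of $I_k$. The paper does not appeal to the Forelli--Rudin estimates; instead it observes that the integrand factors as $H(z,x)$ times $\phi(z,x)^{-k}\,\overline{\phi(z,x)}^{-(n+1+k)}$ and applies the reproducing formula of Proposition~\ref{berg_ker} \emph{once more} to this second factor, reading off $I_k=(1-\nu_x^2)^{-(n+1+2k)}$ directly at $z=x$. This keeps the argument self-contained within the Bergman-kernel framework and produces the constant~$1$ without any bookkeeping. Your citation of Rudin~1.4.10 is a loose end here: that proposition states only the two-sided asymptotic $I_k\asymp(1-\nu_x^2)^{-(n+1+2k)}$ with unspecified implicit constants, not an explicit formula, so your assertion that the factorial ratio ``collapses to a constant bounded by~$1$'' is not delivered by that reference alone and would require a separate computation (essentially redoing the series expansion behind Rudin's proof).
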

Pour prouver ceci nous avons besoin du lemme suivant~:
\begin{lm}\label{HK} 
$$\|H_k(\bullet,x,u_1,\ldots,u_n)\|
\le
 \frac{(n+1)\ldots (n+k)}{R_{\omega}^{k}(1-\nu_x^2)^{ \frac{n+1}{2}+k}}\|u_1\|\ldots \|u_k\|.$$
\end{lm}
\begin{proof}
Nous devons calculer l'int\'egrale de $H_k\bar H_k$ sur la boule
$B(\omega,R_{\omega})$. Ceci se r\'eduit \`a estimer 
$$I_k=\dis \frac{c_n}{R_\omega^{2n}}\int_{B(\omega,R_{\omega})}\frac{1}{\dis \left(1-
\frac{<z-\omega,x-\omega>}{R_\omega^{2}}\right )^{n+1+k}
\,\left(1-
\frac{\overline{<z-\omega,x-\omega>}}{R_\omega^{2}}\right )^{n+1+k}}dz$$
puisque
\begin{align*}
\|H_k(z,x,u_1,\ldots,u_n)\|&\le 
\frac{(n+1)\ldots(n+k)}{R_{\omega}^{k}}\|u_1\|\ldots \|u_k\| \, I_k^{1/2}.
\end{align*}
Nous avons
\begin{align*}
\dis I_k&=\dis \frac{c_n}{R_\omega^{2n}}\int_{B(\omega,R_{\omega})}H(z,x)\frac{1}{\dis \left(1-
\frac{<z-\omega,x-\omega>}{R_\omega^{2}}\right )^{k}
\,\left(1-
\frac{\overline{<z-\omega,x-\omega>}}{R_\omega^{2}}\right )^{n+1+k}}dz\\
&= 
\frac{1}{(1-\nu_x^2)^{n+1+2k}}
\end{align*} 
en utilisant la formule du noyau de Bergman (Proposition
\ref{berg_ker}) et sa propri\'et\'e de reproduction appliqu\'ee \`a la fonction
$\dis
z\mapsto
\frac{1}{\dis \left(1-
\frac{<z-\omega,x-\omega>}{R_\omega^{2}}\right )^{k}
\,\left(1-
\frac{\overline{<z-\omega,x-\omega>}}{R_\omega^{2}}\right )^{n+1+k}}.$

Il s'ensuit la preuve du lemme.
\end{proof}
Nous d\'emontrons maintenant la proposition~\ref{DkFH}.
 \begin{proof}
Nous proc\'edons par r\'ecurrence. La proposition~\ref{berg_ker} r\`egle le
cas $k=0$. Ensuite nous avons~:
\begin{align*}
D^{k+1}f(x)&(u_1,\ldots,u_k,u_{k+1})=\left .
\frac{d}{dt}D^kf(x+tu_{k+1})(u_1,\ldots,u_k)\right |_{t=0}
\\
&=\left .\frac{d}{dt}\frac{c_n}{R_\omega^{2n}}\int_{B(\omega,R_{\omega})}f(z) H_k(z,x+tu_{k+1},u_1,\ldots,u_k)dz\, \right |_{t=0} 
\\
&=\frac{c_n}{R_\omega^{2n}}\int_{B(\omega,R_{\omega})}f(z)\frac{ H_k(z,x,u_1,\ldots ,u_k)(n+1+k) 
<z-\omega, u_{k+1}>}{\dis R_{\omega}^2\left (1-\frac{<z-\omega,x-\omega>)}{R_{\omega}^2}\right )}dz
\\
&=\frac{c_n}{R_\omega^{2n}}\int_{B(\omega,R_{\omega})}f(z) H_{k+1}(z,x,u_1,\ldots,  u_{k+1}) dz.
\end{align*}
D'o\`u la preuve de la premi\`ere assertion. Pour la seconde nous \'ecrivons
$$\|D^kf(x)(u_1,\ldots,u_k)\|\le \| f\|\,\|H_k(\bullet,x,u_1,\ldots ,u_k)\|.$$
Nous concluons en utilisant le lemme~\ref{HK}.
\end{proof}
Des propositions~\ref{berg_ker} et~\ref{DkFH} nous d\'eduisons ais\'ement que
\begin{pp}\label{Berg_F_DFK}
Pour tout $k\ge 0$, $x\in\C^n$ et $f\in (\A2(\omega,R_{\omega}))^s$
nous avons
\begin{equation*}
\dis \|D^kf(x)\|\le ||f||\, 
\frac{(n+1)\ldots (n+k)}{R_{\omega}^k(1-\nu_x^2)^{\frac{n+1}{2}+k}}.
\end{equation*}
\end{pp}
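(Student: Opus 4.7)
Mon plan consiste \`a ramener l'\'enonc\'e vectoriel au cas scalaire d\'ej\`a \'etabli dans la proposition~\ref{DkFH}, en exploitant la d\'efinition de la norme hermitienne sur $(\A2(\omega,R_{\omega}))^s$ via la d\'ecomposition $\|f\|^2=\sum_{i=1}^{s}\|f_i\|^2$.

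Je commencerais par \'ecrire $f=(f_1,\ldots,f_s)$ avec chaque $f_i\in\A2(\omega,R_{\omega})$. Par la d\'efinition m\^eme de la diff\'erentielle d'un tuple, on a composante par composante
$$D^k f(x)(u_1,\ldots,u_k)=\bigl(D^k f_1(x)(u_1,\ldots,u_k),\ldots,D^k f_s(x)(u_1,\ldots,u_k)\bigr)\in\C^s,$$
ce qui, via la norme euclidienne de $\C^s$, fournit
$$\|D^k f(x)(u_1,\ldots,u_k)\|^2=\sum_{i=1}^{s}|D^k f_i(x)(u_1,\ldots,u_k)|^2.$$
En majorant chaque terme du membre de droite par $\|D^k f_i(x)\|^2$ pour des vecteurs $u_j$ unitaires, puis en prenant le supremum \`a gauche, j'obtiendrais l'in\'egalit\'e cl\'e $\|D^k f(x)\|^2\le\sum_{i=1}^{s}\|D^k f_i(x)\|^2$.

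Il resterait alors \`a appliquer la proposition~\ref{DkFH} \`a chaque $f_i$, puis \`a \'elever au carr\'e la borne obtenue, \`a sommer sur $i$, et \`a utiliser la d\'efinition $\|f\|^2=\sum_{i=1}^{s}\|f_i\|^2$. Une racine carr\'ee finale donnerait exactement la majoration annonc\'ee, la constante
$\frac{(n+1)\cdots(n+k)}{R_{\omega}^k(1-\nu_x^2)^{\frac{n+1}{2}+k}}$
se factorisant sans probl\`eme puisqu'elle ne d\'epend pas de $i$.

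Je ne vois pas d'obstacle s\'erieux~: il s'agit essentiellement d'un argument de type Pythagore. La seule subtilit\'e consiste \`a v\'erifier la coh\'erence entre la norme d'op\'erateur $k$-lin\'eaire sur $\C^n$ \`a valeurs dans $\C^s$ et son expression composante par composante. Aucun r\'esultat nouveau sur le noyau de Bergman n'est requis, la proposition~\ref{DkFH} fournissant tout le travail analytique pertinent.
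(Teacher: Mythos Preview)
Your proposal is correct and matches the paper's approach: the paper simply states that the result follows easily from propositions~\ref{berg_ker} and~\ref{DkFH}, and your Pythagorean argument (bounding $\|D^k f(x)\|^2$ by $\sum_i \|D^k f_i(x)\|^2$, then applying the scalar estimate componentwise and summing) is exactly the natural way to fill in that one-line deduction.
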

\section{ $\,$Analyse de l'application \'evaluation}\label{evaluation_sec}
L'application \'evaluation est d\'efinie par
$$\eval\,:\, (f,x)\mapsto eval_x(f) = f(x)$$ 
de $(\A2(\omega,R_{\omega}))^s\times
B(\omega,R_{\omega})$ dans $\C^s$.\\
\index{$eval_x$}
Posons $\dis c_0:=\sum_{k \ge 0}(1/2)^{2^k-1}$ ($\sim 1.63...$), et
$\alpha_0$ ($\sim 0.13...$) la premi\`ere des racines positives du
trin\^ome $(1-4u+2u^2)^2-2u$.
\index{$c_0$}
\index{$\alpha_0$}
\\
Quand la valeur $f(x)$ peut-elle \^etre consid\'er\'ee comme petite~? Nous
allons en donner un sens pr\'ecis en calculant une valeur seuil.
\begin{tm}\label{evaluation_map_tm}
Soient $f=(f_1, \ldots, f_s)\in \A2(\omega,R_{\omega})^s$ et $x\in B(\omega,R_{\omega})$.
Si
$$ c_0 \, (1-\nu_x^2)^\frac{n+1}{2}\frac{\|f(x)\|}{R_{\omega}}+
\nu_x < 1$$
 et
$$
 \frac{(n+1)(n+2)}{2}(1-\nu_x^2)^{(n-1)/2} \left (
\frac{\|f\|}{\,R_{\omega}}\frac{1}{1-\nu_x^2}+1
 \right )\, \frac{\|f(x)\|}{R_{\omega}}\le \alpha_0
 $$
   alors $f(x)$ est petit dans le sens suivant~: la suite de Newton
   d\'efinie par
     $$(f^0,x_0)=(f,x), \quad (f^{k+1  },x_{k+1})=((f^{k},x_{k})-
 D\eval(f^{k},x_{k})^\dagger  \eval(f^{k},x_{k})),           \quad k\ge 0,$$
 en notant $D\eval(f^{k},x_{k})^\dagger$ l'inverse
   g\'en\'eralis\'e de Moore-Penrose de $D\eval(f^{k},x_{k})$,
 converge quadratiquement vers un certain 
  $\dis (g,y)\in   
  (\A2(\omega,R_{\omega}))^s\times B(\omega,R_{\omega})$ satisfaisant $g(y)~=~0$.
  Plus pr\'ecis\'ement nous avons
 $$( \|f-g\|+\|x-y\|^2)^{1/2}\le c_0 \, (1-\nu_x^2)^\frac{n+1}{2}\|f(x)\|.$$
\end{tm}
Il s'ensuit imm\'ediatement le corollaire~:
\begin{cl}\label{cl_evaluation}
Consid\'erons le cas particulier $x=\omega$ dans le th\'eor\`eme~\ref{evaluation_map_tm}.
Si
$$
c_0 \frac{\|f(x)\|}{R_\omega} < 1
$$
et
$$
\frac{(n+1)(n+2)}{2}  
\left ( \frac{\|f\|}{R_{\omega}} + 1 \right ) \, \frac{\|f(x)\|}{R_\omega}
\le \alpha_0 
$$
  alors $f(x)$ est petit. 
Plus pr\'ecis\'ement il existe $(g,y)\in (\A2(x,R_{\omega}))^s\times
B(x,R_{\omega})$ tel que $g(y)=0$ et $$( \|f-g\|+\|x-y\|^2)^{1/2}\le c_0\|f(x)\|.$$
\end{cl}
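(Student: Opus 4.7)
Le plan est de sp\'ecialiser directement le th\'eor\`eme~\ref{evaluation_map_tm} au cas particulier $x=\omega$. La substitution entra\^ine imm\'ediatement $\nu_x = \|x-\omega\|/R_\omega = 0$, et donc $1 - \nu_x^2 = 1$. Il suffit alors de propager cette simplification dans les deux hypoth\`eses et dans la conclusion du th\'eor\`eme, ce qui reproduit exactement les \'enonc\'es annonc\'es dans le corollaire.

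Plus pr\'ecis\'ement, la premi\`ere hypoth\`ese du th\'eor\`eme, $c_0(1-\nu_x^2)^{(n+1)/2}\|f(x)\|/R_\omega + \nu_x < 1$, se r\'eduit directement \`a $c_0\|f(x)\|/R_\omega < 1$, qui est la premi\`ere hypoth\`ese du corollaire. Pour la seconde hypoth\`ese du th\'eor\`eme, les facteurs $(1-\nu_x^2)^{(n-1)/2}$ et $\|f\|/(R_\omega(1-\nu_x^2))+1$ valent respectivement $1$ et $\|f\|/R_\omega +1$, d'o\`u la seconde hypoth\`ese du corollaire. Enfin, la conclusion quantitative $(\|f-g\|+\|x-y\|^2)^{1/2} \le c_0(1-\nu_x^2)^{(n+1)/2}\|f(x)\|$ du th\'eor\`eme devient $(\|f-g\|+\|x-y\|^2)^{1/2} \le c_0\|f(x)\|$.

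Il n'y a pas d'obstacle principal dans cette d\'emonstration~: il s'agit d'une simple v\'erification par substitution, comme le sugg\`ere la formulation \emph{Il s'ensuit imm\'ediatement le corollaire}. Le seul point m\'eritant une observation est la coh\'erence des espaces fonctionnels entre les deux \'enonc\'es~: avec $x=\omega$, l'espace $\A2(x,R_\omega)$ dans lequel vit le couple $(g,y)$ du corollaire co\"incide bien avec l'espace $\A2(\omega,R_\omega)$ utilis\'e dans le th\'eor\`eme, et la boule $B(x,R_\omega) = B(\omega,R_\omega)$ est la m\^eme, ce qui garantit que toutes les quantit\'es ($\|f\|$, $\|f-g\|$, etc.) ont la m\^eme signification dans les deux \'enonc\'es.
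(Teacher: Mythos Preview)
Your proposal is correct and follows exactly the same approach as the paper, which simply states ``Il s'ensuit imm\'ediatement le corollaire'' without further detail. Your explicit verification that $\nu_x=0$ when $x=\omega$ collapses all the hypotheses and the conclusion of th\'eor\`eme~\ref{evaluation_map_tm} to those of the corollary is precisely the intended argument.
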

La suite de cette section consiste \`a \'etablir le th\'eor\`eme~\ref{evaluation_map_tm}.
\subsection{Estimation des d\'eriv\'ees de l'application \'evaluation}
\begin{pp}\label{Deval_pseudo}
$$\| D\eval (f,x)^\dagger\|\le 
(1-\nu_x^2)^{\frac{n+1}{2}}.$$
\end{pp}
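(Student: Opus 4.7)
The plan is to exploit the Hilbert space structure and the reproducing kernel to reduce the estimate to a simple spectral lower bound on $D\eval(f,x)\,D\eval(f,x)^*$.

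First I would write out the derivative explicitly: for $(g,u)\in (\A2(\omega,R_\omega))^s\times\C^n$,
$$D\eval(f,x)(g,u) = g(x) + Df(x)\,u.$$
Next I would compute the adjoint $D\eval(f,x)^*:\C^s\to (\A2(\omega,R_\omega))^s\times \C^n$. Using the Riesz representation $g_i(x)=\langle g_i,h_x\rangle$ with $h_x(z)=\overline{H(z,x)}$ provided by the Bergman kernel, a direct inner-product computation in both factors yields
$$D\eval(f,x)^*(v) = \bigl((v_1 h_x,\ldots,v_s h_x),\ Df(x)^* v\bigr).$$

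The main step is then to compose and get
$$D\eval(f,x)\,D\eval(f,x)^*(v) = h_x(x)\,v + Df(x)\,Df(x)^* v = H(x,x)\,v + Df(x)\,Df(x)^* v,$$
that is, $D\eval(f,x)\,D\eval(f,x)^* = H(x,x)\,I_s + Df(x)\,Df(x)^*$ as an operator on $\C^s$. Since $Df(x)\,Df(x)^*$ is Hermitian positive semi-definite, this operator is bounded below by $H(x,x)\,I_s$, which by Proposition~\ref{berg_ker}(2) equals $(1-\nu_x^2)^{-(n+1)}\,I_s$. In particular $D\eval(f,x)$ is surjective onto $\C^s$, so the Moore--Penrose inverse is $D\eval(f,x)^\dagger = D\eval(f,x)^*\bigl(D\eval(f,x)\,D\eval(f,x)^*\bigr)^{-1}$.

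Finally, a short calculation shows $\|D\eval(f,x)^\dagger v\|^2 = \langle (D\eval(f,x)\,D\eval(f,x)^*)^{-1}v,\,v\rangle$, so
$$\|D\eval(f,x)^\dagger\|^2 = \bigl\|(D\eval(f,x)\,D\eval(f,x)^*)^{-1}\bigr\| \le H(x,x)^{-1} = (1-\nu_x^2)^{n+1},$$
and taking square roots yields the claimed bound. The only delicate point is the correct identification of the adjoint via the reproducing kernel; everything else is a straightforward operator-theoretic computation.
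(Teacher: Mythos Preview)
Your proof is correct and follows essentially the same route as the paper's: both identify the operator $D\eval(f,x)\,D\eval(f,x)^* = H(x,x)I_s + Df(x)Df(x)^*$ via the reproducing kernel, bound its inverse by $H(x,x)^{-1}$, and then compute $\|D\eval(f,x)^\dagger v\|^2 = v^*\mE^{-1}v$. The only cosmetic difference is that the paper reaches $\mE$ by first describing $(\ker D\eval(f,x))^\perp$ as $\{(H(\bullet,x)v,\,Df(x)^*v):v\in\C^s\}$ and then solving $D\eval(f,x)$ applied to this parametrization, whereas you compute the adjoint $D\eval(f,x)^*$ directly and use the surjective Moore--Penrose formula; since the range of the adjoint is exactly that orthogonal complement, the two computations are the same.
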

\begin{proof}
La d\'eriv\'ee de l'application \'evaluation est
$$D\eval(f,x)(g,y)=g(x)+Df(x)y.$$
Donc $(g,y)\in \ker D\eval(f,x)$ si et seulement si $g(x)+Df(x)y=0$,
c'est-\`a-dire 
$$ <g_i,H(\bullet,x)>+<y,Df_i(x)^*>=0,\quad i=1:s.$$
Cette condition peut \^etre exprim\'ee \`a l'aide du produit int\'erieur de
$(\A2)^s\times \C^n$~:
$$<g,(0,\ldots,0,H(\bullet,x),0,\ldots,0)>+<y,Df_i(x)^*>=0,
\quad i=1:s.$$
Donc l'espace vectoriel $(\ker D\eval (f,x))^\perp$ est engendr\'e par
l'ensemble 
$$(H(\bullet ,x)v, Df(x)^*v)$$
o\`u $v\in\C^n$. La condition
$$D\eval (f,x)(H(\bullet,x),\,Df(x)^*v)=u$$
devient
$$\left(H(x,x)I_{s}+Df(x)Df(x)^*\right )v=u.$$
La matrice $\mE=H(x,x)I_{s}+Df(x)Df(x)^*$ est la somme d'une matrice
diagonale positive  et d'une matrice hermitienne. Appliquant le
th\'eor\`eme de Weyl page 203 dans \cite{stewart_sun90}, les valeurs propres de la
matrice $\mE$ sont sup\'erieures \`a celles de  $H(x,x)I_{s}>0$.
Donc la norme de la matrice inverse $\mE^{-1}$ v\'erifie
$$\|\mE^{-1}\|\le \frac{1}{H(x,x)}.$$
Ceci permet de calculer $\|D\eval (f,x)^\dagger\|$. En fait, soit
$u,v\in \C^n$ tel que $\mE v=u$. Nous avons
\begin{align*}
\|D\eval (f,x)^\dagger u\|^2&=\|H(\bullet,x)\|^2\,\|v\|^2+\|Df(x)v\|^2
\\
&=H(x,x)\, \|v\|^2+\|Df(x)^*v\|^2.
\end{align*}
Comme la matrice $\mE^{-1}$ est hermitienne, nous pouvons \'ecrire
\begin{align*}
\|D\eval (f,x)^\dagger u\|^2&=
v^*\mE v
\\
&=u^*\mE^{-1} u
\\
&\le \|\mE^{-1}\|\, \|u\|^2.
\end{align*}
Finalement
\begin{align*}
\|D\eval (f,x)^\dagger \|^2 &\le \|\mE^{-1}\|
\\
&
\le \frac{1}{H(x,x)}
\\
&\le
(1-\nu_x^2)^{n+1},\qquad\textrm{par la proposition~\ref{berg_ker} }.
\end{align*}
Ceci ach\`eve la preuve de la proposition.
\end{proof}	
\begin{pp}\label{Dkeval}
$$\|D^k\eval (f,x)\|\le 
\frac{(n+1)\ldots (n+k)\,\|f\|}{R_{\omega}^{k}(1-\nu_x^2)^{\frac{n+1}{2}+k}}
+\frac{k(n+1)\ldots (n+k-1)\,}{R_{\omega}^{k-1}(1-\nu_x^2)^{\frac{n+1}{2}+k-1}}.
$$
\end{pp}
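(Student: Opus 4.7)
L'id\'ee cl\'e est que l'application \'evaluation $\eval\,:\,(f,x)\mapsto f(x)$ est \emph{lin\'eaire} en $f$ et (seulement) analytique en $x$. Par cons\'equent, toute d\'eriv\'ee d'ordre sup\'erieur qui diff\'erencie deux fois ou plus dans la direction $f$ s'annule. Ceci simplifie consid\'erablement la structure de $D^k\eval(f,x)$.

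Concr\`etement, je commencerais par calculer explicitement la $k$-i\`eme d\'eriv\'ee de $\eval$ comme forme multilin\'eaire sym\'etrique sur $\bigl((\A2(\omega,R_{\omega}))^s\times \C^n\bigr)^k$. Pour des accroissements $(g_1,y_1),\ldots,(g_k,y_k)$, seuls subsistent deux types de contributions~:
\begin{itemize}
\item[1--] la d\'eriv\'ee pure en $x$ d'ordre $k$, qui vaut $D^k f(x)(y_1,\ldots,y_k)$~;
\item[2--] les termes mixtes, lin\'eaires en exactement un $g_i$ et diff\'erenci\'es $k-1$ fois en $x$ par rapport aux autres variables $y_j$~: par sym\'etrisation, ceux-ci donnent $\sum_{i=1}^{k} D^{k-1}g_i(x)(y_1,\ldots,\widehat{y_i},\ldots,y_k)$.
\end{itemize}
Cette d\'ecomposition se v\'erifie par r\'ecurrence sur $k$, en d\'erivant successivement l'expression $D\eval(f,x)(g,y)=g(x)+Df(x)y$ obtenue dans la preuve de la proposition~\ref{Deval_pseudo}.

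Ensuite, on prend les suprema sur des accroissements de norme $\le 1$, en remarquant que $\|(g_i,y_i)\|\le 1$ entra\^ine \`a la fois $\|g_i\|\le 1$ et $\|y_i\|\le 1$. L'in\'egalit\'e triangulaire donne
$$\|D^k\eval(f,x)\|\le \|D^kf(x)\|+\sum_{i=1}^{k}\sup_{\|g_i\|\le 1}\|D^{k-1}g_i(x)\|.$$
La proposition~\ref{Berg_F_DFK} appliqu\'ee \`a $f$ d'ordre $k$ borne le premier terme par $\|f\|\,\frac{(n+1)\cdots(n+k)}{R_\omega^{k}(1-\nu_x^2)^{(n+1)/2+k}}$. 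Appliqu\'ee \`a chaque $g_i$ d'ordre $k-1$ avec $\|g_i\|\le 1$, elle borne chacun des $k$ termes de la somme par $\frac{(n+1)\cdots(n+k-1)}{R_\omega^{k-1}(1-\nu_x^2)^{(n+1)/2+k-1}}$, d'o\`u le facteur combinatoire $k$ de la sommation. En rassemblant, on obtient exactement la borne annonc\'ee.

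L'essentiel du travail est donc de bien d\'enombrer les contributions symmetris\'ees dans le d\'eveloppement de $D^k\eval$ et de s'assurer du facteur $k$ devant le second terme~; une fois ce point \'etabli, l'application directe de la proposition~\ref{Berg_F_DFK} conclut sans calcul suppl\'ementaire.
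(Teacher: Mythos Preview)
Your proposal is correct and follows essentially the same approach as the paper: you establish the same explicit decomposition $D^k\eval(f,x)(g_1,y_1,\ldots,g_k,y_k)=D^kf(x)(y_1,\ldots,y_k)+\sum_{i=1}^k D^{k-1}g_i(x)(y_1,\ldots,\widehat{y_i},\ldots,y_k)$, then bound each piece via the Bergman-kernel estimates (Proposition~\ref{DkFH}/\ref{Berg_F_DFK}) together with $\|g_i\|,\|y_i\|\le\|(g_i,y_i)\|$. The only cosmetic difference is that the paper states the decomposition directly whereas you motivate it by linearity in $f$ and sketch an induction; the substance is identical.
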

\begin{proof}
Nous avons
\begin{align*}
D^k\eval (f,x)&(g^{(1)},y^{(1)},\ldots ,g^{(k)},y^{(k)})
\\ &= D^kf(x)(y^{(1)},\ldots,y^{(k)})+\sum_{j=1}^k D^{k-1}g^{(j)}(x)(y^{(1)},\ldots,\widehat{y^{(j)}},\ldots, y^{(k)}),
\end{align*}
o\`u $\widehat{y^{(j)}}$ signifie que ce terme n'appara\^\i t pas. Alors en
utilisant la proposition~\ref{DkFH} nous trouvons que
\begin{align*}
\|D^k&\eval (f,x)(g^{(1)},y^{(1)},\ldots ,g^{(k)},y^{(k)})\|
\\ &\le \| D^kf(x)(y^{(1)},\ldots,y^{(k)})\|+\sum_{j=1}^k\| D^{k-1}g^{(j)}(x)(y^{(1)},\ldots,\widehat{y^{(j)}},\ldots, y^{(k)})\|
\\
&
\le 
\frac{(n+1)\ldots (n+k)\,\|f\|}{R_{\omega}^{k}(1-\nu_x^2)^{\frac{n+1}{2}+k}}\|y^{(1)}\|\ldots \|y^{(k)}\|
\\
&\quad\quad\quad 
 +\sum_{j=1}^k\frac{(n+1)\ldots (n+k-1)\,\|g^{(j)}\|\,}{R_{\omega}^{k-1}(1-\nu_x^2)^{\frac{n+1}{2}+k-1}}\|y^{(1)}\|\ldots \widehat{\|y^{(j)}\|}\ldots\|y^{(k)}\|
.
\end{align*}
Nous bornons $\|y^{(j)}\|$ et $\|g^{(j)}\|$ par
$\|(g^{(j)},y^{(j)})\|$. Nous obtenons
\begin{align*}
\|D^k&\eval (f,x)(g^{(1)},y^{(1)},\ldots ,g^{(k)},y^{(k)})\|
\\ & \le 
\left (\frac{(n+1)\ldots (n+k)\,\|f\|}{R_{\omega}^{k}(1-\nu_x^2)^{\frac{n+1}{2}+k}}
+\frac{k(n+1)\ldots (n+k-1)\,}{R_{\omega}^{k-1}(1-\nu_x^2)^{\frac{n+1}{2}+k-1}}
\right )\\
& 
\hspace{8cm}||(g^{(1)},y^{(1)})\|\ldots\|(g^{(k)},y^{(k)})\|
.
\end{align*}
Finalement 
$$\|D^k\eval (f,x)\|\le \frac{(n+1)\ldots (n+k)\,\|f\|}{R_{\omega}^{k}(1-\nu_x^2)^{\frac{n+1}{2}+k}}
+\frac{k(n+1)\ldots (n+k-1)\,}{R_{\omega}^{k-1}(1-\nu_x^2)^{\frac{n+1}{2}+k-1}}
.$$
\end{proof}
\subsection{D\'emonstration du th\'eor\`eme~\ref{evaluation_map_tm}}
La d\'emonstration utilise le th\'eor\`eme 128 page 121 de J.-P. Dedieu,
Points fixes, z\'eros et la m\'ethode de Newton, Springer, 2006.
\begin{tm}\label{dedieu_128}
Donnons-nous $f$ une application analytique de $\E$ dans $\F$, deux
espaces de Hilbert. Soit $x\in \C^n$. Nous supposons que la d\'eriv\'ee
$Df(x)$ est surjective. En notant par $Df(x)^\dagger$ l'inverse g\'en\'eralis\'e de Moore-Penrose
de $Df(x)$, nous introduisons les quantit\'es
\begin{itemize}
\item[1--] $\dis \beta(f,x)=\|Df(x)^\dagger f(x)\|$~;
\vspace{.2cm}

\item[2--] $\dis \gamma(f,x)=\sup_{k \ge 2}\|\frac{1}{k!}
Df(x)^\dagger D^kf(x)\|^{\frac{1}{k-1}}$~;
\vspace{.2cm}

\item[3--] $\dis \alpha(f,x)= \beta(f,x)\gamma(f,x)$.
\end{itemize}
Rappelons que $\alpha_0$ et $c_0$ sont les constantes introduites dans
cette section.
\\
Si $\alpha(f,x)\le \alpha_0 $ alors il existe un z\'ero $\zeta$ de $f$
dans la boule $B(x_0,c_0\beta(f,x_0))$ et la suite de Newton
$$x_0=x,\quad x_{k+1}=x_k-Df(x_k)^\dagger f(x_k),\quad k\ge 0,$$
converge quadratiquement vers $\zeta$.
\end{tm}
Nous sommes d\'esormais pr\^ets pour prouver le th\'eor\`eme~\ref{evaluation_map_tm}.
\begin{proof}
Elle consiste \`a v\'erifier la condition $\alpha(\eval,(f,x))\le\alpha_0$.
Utilisant les propositions~\ref{Deval_pseudo} et~\ref{Dkeval}, nous
pouvons borner la quantit\'e $\gamma(\eval,(f,x))$. Nous obtenons
\begin{align*}
\gamma(\eval,(f,x))&\le \sup_{k\ge 2} \left (\frac{1}{k!}
\, \|D\eval (f,x)^\dagger\|\, \|D^k\eval (f,x)\|\right )^{\frac{1}{k-1}}
\\
&\le
\sup_{k\ge 2} \left (\binomial{n+k}{k}\frac{\|f\|\, }{R_{\omega}^{k}(1-\nu_x^2)^{k}}
+
\binomial{n+k-1}{k-1}\frac{1}{R_{\omega}^{k-1}(1- \nu_x^2)^{k-1}} 
\right )^{\frac{1}{k-1}}.
 \end{align*}
Nous savons que $\dis \binomial{n+k}{k}=
 \frac{n+k}{k}\binomial{n+k-1}{k-1}$. De plus la fonction $\dis
 k\mapsto \binomial{n+k}{k}^{\frac{1}{k-1}}$ est d\'ecroissante. Donc
 $\dis \binomial{n+k}{k}^{\frac{1}{k-1}}\le
 \frac{(n+1)(n+2)}{2}$. Ainsi nous obtenons l'estimation au point
 \begin{equation}\label{gamma_eval_eq}
 \gamma(\eval,(f,x))\le
 \frac{(n+1)(n+2)}{2R_{\omega}(1-\nu_x^2)}
 \left (
 \frac{\|f\|\,}{R_{\omega}(1-\nu_x^2)
} +  1
\right )
 .
 \end{equation} 
De ma m\^eme fa\c con la quantit\'e $\alpha(\eval, (f,x))$ peut \^etre born\'ee par
  \begin{align*}
 \alpha(\eval,(f,x))&\le \gamma(\eval, (f,x))\, \beta(\eval,(f,x))
 \\
 &\le \gamma(\eval, (f,x))\, \|D\eval (f,x)^\dagger\|\,\|f(x)\|.
 \end{align*}
 En utilisant les in\'egalit\'es de la proposition~\ref{Deval_pseudo} et~(\ref{gamma_eval_eq})
 il vient
\vspace{0.2cm}

 \begin{align}\label{alpha_eval_eq}
\alpha&(\eval , (f,x)) \le \frac{(n+1)(n+2)}{2R_\omega}\,(1-\nu_x^2)^{(n-1)/2} \left (
\frac{\|f\|\,}{R_{\omega}(1-\nu_x^2)
} +  1
 \right )\, \|f(x)\|.
\end{align}
La condition $$\frac{(n+1)(n+2)}{2R_\omega}\,(1-\nu_x^2)^{(n-1)/2} \left (
\frac{\|f\|\,}{R_{\omega}(1-\nu_x^2)
} +  1
 \right )\, \|f(x)\|\le \alpha_0$$
 implique \'evidemment $\alpha(\eval (f,x))\le \alpha_0$.
 \\
Donc le th\'eor\`eme~\ref{dedieu_128} s'applique. La suite de Newton
 $$(f^0,x_0)=(f,x), \quad (f^{k+1  },x_{k+1})=((f^{k},x_{k})-
 D\eval(f^{k},x_{k})^\dagger  \eval(f^{k},x_{k}),           \quad k\ge 0,$$
converge vers un certain
  $\dis (g,y)\in  B((f,x),c_0\beta(\eval,(f,x))\subset
  (\A2(\omega,R_{\omega})^s\times \C^n$.
En d'autres termes
\begin{align*}
(\|f-g\|^2+\|x-y\|^2)^\frac{1}{2} &\le c_0\beta(\eval,(f,x))
\\
&
\le 
c_0 \|D\eval (f,x)^\dagger\|\,   \|f(x)\|
\\
&\le    c_0 \, (1-\nu_x^2)^\frac{n+1}{2}\|f(x)\|.
\end{align*}
Ceci implique que $y\in B(\omega,R_{\omega})$ parce que
 \begin{align*}
 \|y-\omega\|&\le \| y-x\|+\rho_x
 \\
 &\le  c_0 \, (1-\nu_x^2)^\frac{n+1}{2}\|f(x)\|+\rho_x
 \\
 &<R_{\omega}. \quad\quad \textsf{par hypoth\`ese}.
\end{align*} 
Ceci ach\`eve la preuve.
 \end{proof}
\section{ $\,$D\'eflation et op\'erateur de Newton singulier}\label{Kern-Sing-Newton}
Nous d\'efinissons dans cette section une suite de d\'eflation en point $x_0$ proche d'une racine $\zeta$. Comme nous l'avons \'evoqu\'e
en section~\ref{sec2} celle-ci est la combinaison d'une op\'eration de s\'election et d'une op\'eration de d\'enoyautage.  Si $x_0=\zeta	$ rappelons  que  nous commencons par remplacer les
\'equations par les d\'eriv\'ees d'ordre la valuation moins un.
Nous obtenons ainsi un syst\`eme de rang plus grand que un.
Puis, si le rang est plus petit que $n$,
nous
pr\'eparons ce syst\`eme en divisant les \'equations en deux
familles. L'invariant qui pr\'eside \`a cette partition est le rang $r$ de
la matrice jacobienne $Df(\zeta)$.
 Sans perte de g\'en\'eralit\'e nous pouvons supposer
que les $r$ premiers g\'en\'erateurs poss\`edent des parties affines
lin\'eairement ind\'ependantes. L'op\'eration de d\'enoyautage consiste \`a ajouter au $r$ premi\`eres  fonctions celles qui constituent le compl\'ement de Schur de $Df(x)$ associ\'e
\`a $D_{1:r}f_{1:r}(x)$.
En section~\ref{Mult-drops-kerneling} nous  montrons que la multiplicit\'e de la racine du syst\`eme obtenu apr\`es un cran de d\'eflation a chut\'e strictement. Comme nous l'avons soulign\'e en section~\ref{sec2}, nous pouvons alors r\'eit\'erer
ce proc\'ed\'e.

D\'etaillons le proc\'ed\'e de d\'eflation d\'ecrit ci-dessus quand $x_0$ est proche de $\zeta$. Celui-ci repose sur les deux propri\'et\'es suivantes. Premi\`erement l'\'evaluation en $x_0$ d'une fonction qui s'annule en $\zeta$ est petite au sens de l'analyse effectu\'ee \`a la section~\ref{evaluation_sec}. Deuxi\`emement le rang num\'erique de $Df(x_0)$
est celui de $DF(\zeta)$ pour un $\epsilon$ d\'etermin\'e par l'analyse   
effectu\'ee en section~\ref{rank_sec}. Pr\'ecisons ces id\'ees en commen\c cant par introduire
 la notion de valuation \`a $\varepsilon$ pr\`es.
\begin{defin}
Soient $\varepsilon\ge 0$, $x_0\in\C^n$ et $f\in\C\{x-x_0\}$.  Nous disons que
$f$ a  une $\varepsilon$-valuation $p$\index{$\varepsilon$-valuation} en $x_0$ si 
\begin{itemize}
\item[1--] $\dis \forall k<p,\forall \alpha\in \N^n$
tel que  $|\alpha |=k\quad \textrm{et} \quad \left |
\frac{\partial^kf(x_0)}{\partial x^\alpha}
\right |\le \varepsilon;$
\item[2--] $\dis \exists\, \alpha\in \N^n$ tel que $|\alpha |=p\quad \textrm{et} \quad \left |
\frac{\partial^pf(x_0)}{\partial x^\alpha}
\right |	 >\varepsilon.$
\end{itemize}
Si $\varepsilon=0$ la valuation est dite exacte.
\end{defin}
\begin{avt}
Les d\'efinitions qui vont suivre n\'ecessitent de consid\'erer les syst\`emes tant\^ot comme des listes, tant\^ot comme des vecteurs, tant\^ot comme des ensembles. Par exemple,
nous notons $\vect (\bullet)$ l'op\'erateur qui concat\`ene les \'el\'ements non nuls d'un ensemble ou d'une
matrice en un vecteur ligne.
\end{avt}
Nous d\'efinissons ci-dessous un op\'erateur de s\'election.
\begin{defin}
Soient $\varepsilon\ge 0$, $x_0\in\C^n$ et
$f=(f_1,\ldots,f_s)\in\C\{x-x_0\}^s$. Nous notons $p_k$ 
l'$\varepsilon$-valuation en $x_0$ de $f_k$, $k=1:s$. 
Soit $\dis \Delta_k=\left \{\frac{\partial^{p_k-1}f_k(x)}{\partial x^\alpha}\,:\,|\alpha|=p_k-1\right\}$.
Nous d\'efinissons l'op\'erateur de s\'election $S$ par
$$
\dis S\,:\, f\rightarrow\vect\left (\overset{s}{\underset{k=1}{\bigcup}}\Delta_k
\right).\index{$S(f)$}	
$$
Si $\varepsilon=0$ l'op\'eration de s\'election est dite exacte.
\end{defin}
\begin{req}
Le calcul de $S(f)$ s'effectue par l'algorithme r\'ecursif dit de s\'election en faisant  S\'election$(x_0,f,\emptyset,\emptyset)$ . Il est facile de voir que cet algorithme est libre de $\epsilon$ en supposant que le calcul de la norme dans $\A2(x_0,R_{x_0})$ le soit. De plus le nombre d'\'etapes de cet algorithme est fini et $S(f):=S_f$.
\end{req}
\begin{table}
$$\fbox{ \begin{minipage}{1\textwidth }
\textbf{Algorithme de s\'election} : S\'election$(x_0,f,S_f,S_1)$ 
\\avec~: $x_0\in \C^n$, $f\in\A2(x_0,R_{x_0})^s$, $S_f\in\A2(x_0,R_{x_0})^s$,
$S_1\in\A2(x_0,R_{x_0})^s$
\begin{enumerate}[1-]
\item \quad Pour $k=1$ \`a $\sharp f$
\item \quad\quad $\dis \eta:=\frac{2\alpha_0}{(n+1)(n+2)(R_{x_0}+\|f_k\|)\,R_{x_0}^{n-2}}$
\item \quad\quad Si $\|f_k(x_0\|\le \eta$ alors \quad\quad\quad
  \textsf{test justifi\'e par le corollaire~\ref{cl_evaluation}}
 \item \quad\quad\quad  $S_1:=\{f_k\}$
  \item \quad\quad \quad S\'election$(x_0,\{\nabla f_k\}\minus \{0\},S_f,S_1)$
\item \quad \quad Sinon
\item \quad  \quad \quad $S_f:= S_f\cup S_1$
\item \quad \quad fin Si
\item \quad fin Pour
\end{enumerate}
    \end{minipage}
 }
 $$
   \caption{}\label{selection_table}
 \end{table}

Comme nous utiliserons souvent la notion de \emph{compl\'ement de Schur}
dans la suite, nous rappelons sa d\'efinition.
\begin{defin}
Le compl\'ement de Schur d'une matrice $\dis M=\left (\begin{array}{cc}
A&B \\C&D \end{array}\right )$ de rang $r>0$ associ\'e \`a une
sous-matrice inversible $A$ de rang $r$ est par d\'efinition 
  $\schur(M):=D-CA^{-1}B$. 
  \\
  Si $r =0$ nous d\'efinissons $\schur(M):=M$.\index{$\schur(M)$}
  \end{defin}
\begin{defin}\label{defi_Kf}
Soient $\varepsilon\ge 0$, $0\le r<n$ et $f=(f_1,\ldots,
f_s)\in\C\{x-x_0\}^s$. Supposons que $D_{1:r}f_{1:r}(x_0)$ a un
$\varepsilon$-rang \'egal \`a $r$.
Nous d\'efinissons l'op\'erateur de \emph{d\'enoyautage}
$$ K\, :\,f\mapsto \left (f_1,\ldots,f_r, vec(\schur (Df(x)) )\right
)\in \C\{x-x_0\}^{r+(n-r)\, (s-r)}.
$$
Nous disons que $K(f)$ est un $\varepsilon$-d\'enoyautage de $f$ si nous avons
\begin{equation}\label{test_Kf}
\|K(f)(x_0)\|\le \varepsilon.
\end{equation}
Le d\'enoyautage est \emph{exact} quand $\varepsilon=0$.\index{$K(f)$}
\end{defin}  
\begin{defin}\label{dfl_seq} \textbf{$(${\bf Suite de d\'eflation}$)$.}
Soient $\varepsilon\ge 0$, $x_0\in \C^n$ et $f=(f_1,\ldots, f_s)\in
\C\{x-x_0\}^s.$ La suite
\begin{align*}
  F_0&=S(f)\\
  F_{k+1}&=S( \,K ( F_k)\, ), \quad k\ge 0,
 \end{align*}
est appel\'ee \emph{suite de d\'eflation}.
 \\ 
L'\emph{\'epaisseur} est par d\'efinition l'indice $\dis \ell$
\index{$\ell$} o\`u le $\varepsilon$-rang de $DF_\ell(x_0)$ est \'egal \`a
$n$, et pas avant. Nous verrons dans la section
\ref{Mult-drops-kerneling} que l'\'epaisseur est finie. Enfin nous
appelons \emph{syst\`eme d\'eflat\'e} $\dfl(f)$ de $f$\index{$\dfl(f)$} un
syst\`eme carr\'e de rang $n$ extrait de $F_\ell$.
 \end{defin}
 \begin{req}\label{req_Sk}
 Par souci de simplification nous avons not\'e les op\'erateurs de s\'election et de d\'enoyautage $S$ et $K$ respectivement plut\^ot que $S_{x_0,\epsilon}$ et
 $K_{x_0,\epsilon}$.
 \end{req}
 \begin{req}\label{req_rang_dfl}
 Par construction, le rang de chaque syst\`eme d'une suite de d\'eflation est non nul.
 \end{req}
 \begin{req}\label{req_rang_dfl2}
 Quand le rang num\'erique de la matrice jacobienne $DF(x_0)$ est nul, on peut remarquer qu'une \'etape de l'op\'eration de s\'election correspond \`a une op\'eration de d\'enoyautage. On verra qu'en section~\ref{sec-gammal}, l'analyse num\'erique de l'agorithme de calcul d'un syst\`eme d\'eflat\'e
 est simplifi\'ee si le rang num\'erique de $DF(x_0)$
 est strictement positif, ce qui est le cas apr\`es une op\'eration de s\'election.
 \end{req}
\textbf{Note historique}. Nous adoptons la terminologie
\emph{\'epaisseur} introduite par Emsalem dans~\cite{emsalem78} plut\^ot
que le terme \emph{profondeur ``depth''} utilis\'e plus r\'ecemment par Mourrain,
Matzaflaris dans~\cite{MM11} ou Dayton, Li, Zeng~\cite{DZ05},
~\cite{DLZ11}.~\,~$\circ$
\begin{tm}\label{dfl_tm}
Soient $x_0\in \C^n$ et $f\in\A2(x_0,R_{\omega})^s$. Alors l'algorithme
d\'ecrit dans la table~\ref{dfl_table} prouve l'existence d'une suite de
d\'eflation o\`u les tests de v\'erification des
in\'egalit\'es~(\ref{defi_numerical_rank}) and~(\ref{test_Kf}) sont ex\'ecut\'es
respectivement gr\^ace au th\'eor\`eme~\ref{rank_tm} et au corollaire~\ref{cl_evaluation}.
\end{tm}
\begin{table}
 $$\fbox{
 \begin{minipage}{1\textwidth }
\textbf{Suite de d\'eflation et syst\`eme d\'eflat\'e}
\begin{enumerate}[1-]
\item \quad Entr\'ees~: $x_0\in \C^n$, $f\in\A2(x_0,R_{x_0})^s$
\item \quad $F:=S(f)$.
\item \quad $\dis \eta:=\frac{2\alpha_0}{(n+1)(n+2)(R_{x_0}+\|F\|)R_{x_0}^{n-2}}$
\\
\item \quad si $\|F(x_0\|\le \eta$ alors \quad\quad\quad
  \textsf{test justifi\'e par le corollaire~\ref{cl_evaluation}}
 \item  \quad\quad $r:=\textbf{rang num\'erique}(DF(x_0))$ 
\item \quad \quad si $r<n$ alors
\item \quad  \quad \quad $F:=S(K(F))$
\item \quad  \quad\quad aller en $4$
\item \quad\quad sinon
\item \quad\quad \vbox{$\dfl(f)$ un syst\`eme d\'eflat\'e carr\'e de rang
    num\'erique $n$ extrait de $F$}
\item \quad\quad fin si
\item \quad fin si
\item \quad Sortie : $\dfl(f)$
\end{enumerate}
    \end{minipage}
 }
 $$
   \caption{}\label{dfl_table}
 \end{table}
 \begin{defin} 
Nous appelons l'op\'erateur de Newton classique associ\'e au syst\`eme d\'eflat\'e 
$dfl(f)$ de $\varepsilon$-rang $n$ l'op\'erateur de Newton
\emph{singulier} du syst\`eme initial $f$.
\end{defin}
Plut\^ot que de calculer la suite de d\'eflation introduite dans la
d\'efinition~\ref{dfl_seq}, il est suffisant de la tronquer. Pour ce
faire nous avons besoin de la d\'efinition suivante.
\begin{defin}\label{dfl_tr}
Soit $p\ge 1$. Nous notons $Tr_{x_0,p}(F)$ la s\'erie tronqu\'ee \`a l'ordre
$p$ de la fonction analytique $F$ au point $x_0$.\\
Nous appelons alors \emph{suite de d\'eflation tronqu\'ee} \`a l'ordre $p$
au point $x_0$ la suite~:
\begin{align*}
T_0&=Tr_{x_0,p}(S(f))
\\
T_{k+1}&=Tr_{x_0,p-k-1}\left (S\,(\,K(T_k)\,)\,
\right ),\,\quad 0\le k\le p.\end{align*} 
\end{defin}
Pour d\'efinir l'op\'erateur de Newton singulier il est alors suffisant de
conna\^\i tre l'\'epaisseur de la suite de d\'eflation.
 \begin{pp}
Soit $\ell$ l'\'epaisseur de la suite de d\'eflation. Consid\'erons la suite de d\'eflation tronqu\'ee
$(T_k)_{k\ge 0}$ \`a l'ordre $\ell +1$ au point $x_0$
(d\'efinition~\ref{dfl_tr}). Alors l'op\'erateur de Newton singulier
associ\'e \`a $f$ est \'egal \`a l'op\'erateur de Newton classique associ\'e \`a $T_{\ell}$. 
 \end{pp}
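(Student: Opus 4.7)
L'op\'erateur de Newton classique associ\'e \`a une application $h$, \'evalu\'e en un point $x$, s'\'ecrit $x-Dh(x)^\dagger h(x)$~; il ne d\'epend donc que du $1$-jet de $h$ en $x$. La strat\'egie est d'\'etablir l'\'egalit\'e des $1$-jets de $F_\ell$ et $T_\ell$ en $x_0$, c'est-\`a-dire
$$F_\ell(x_0)=T_\ell(x_0)\quad \text{et}\quad DF_\ell(x_0)=DT_\ell(x_0),$$
ce qui suffira pour identifier les deux op\'erateurs de Newton au point $x_0$ o\`u l'it\'eration est effectu\'ee.

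La preuve proc\`edera par r\'ecurrence finie sur $k\in\{0,\ldots,\ell\}$, en prouvant l'assertion
$$(\mathcal H_k)~: \text{les }(\ell+1-k)\text{-jets de }F_k\text{ et }T_k\text{ en }x_0\text{ co\"incident.}$$
Le cas $k=\ell$ donnera alors le r\'esultat, le $1$-jet \'etant pr\'ecis\'ement ce que requiert l'op\'erateur de Newton. Le cas initial $(\mathcal H_0)$ d\'ecoule imm\'ediatement de la d\'efinition~\ref{dfl_tr}, la troncature $Tr_{x_0,\ell+1}$ pr\'eservant par construction le $(\ell+1)$-jet en $x_0$. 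Pour le pas inductif, supposant $(\mathcal H_k)$, il s'agira de v\'erifier que $S(K(F_k))$ et $S(K(T_k))$ ont m\^emes $(\ell-k)$-jets en $x_0$, puis d'observer que la troncature $Tr_{x_0,\ell-k}$ figurant dans la d\'efinition de $T_{k+1}$ pr\'eserve cette co\"incidence, livrant ainsi $(\mathcal H_{k+1})$.

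Le c{\oe}ur technique consistera \`a \'etablir le lemme cl\'e suivant~: si deux applications ont m\^eme $(m+1)$-jet en $x_0$, alors leurs images par $S\circ K$ poss\`edent m\^eme $m$-jet en $x_0$. Ce lemme se d\'ecomposera en deux parties. Premi\`erement, puisque le compl\'ement de Schur est une fonction rationnelle des entr\'ees de la matrice jacobienne $Dh(x)$, le $m$-jet de $K(h)=\bigl(h_1,\ldots,h_r,\vect\schur(Dh(x))\bigr)$ en $x_0$ ne d\'epend que du $(m+1)$-jet de $h$ en $x_0$, une d\'erivation suppl\'ementaire provenant du jacobien. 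Deuxi\`emement, gr\^ace \`a la remarque~\ref{req_rang_dfl2}, l'op\'eration de s\'election se d\'ecompose, dans les cas o\`u le rang num\'erique du jacobien est nul, en une succession d'op\'erations de d\'enoyautage, chacune co\^utant exactement un ordre de d\'erivation~: l'analyse de $S$ se ram\`ene donc \`a celle de $K$, et le nombre total d'op\'erations \'el\'ementaires absorb\'ees par la suite $(F_k)$ est pr\'ecis\'ement l'\'epaisseur $\ell$.

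Le principal obstacle sera de justifier avec soin que les tests algorithmiques sous-jacents (d\'etermination des $\varepsilon$-valuations via le corollaire~\ref{cl_evaluation}, du $\varepsilon$-rang via le th\'eor\`eme~\ref{rank_tm}, et choix des sous-matrices pour le compl\'ement de Schur) rendent les m\^emes d\'ecisions sur $F_k$ et $T_k$, afin que $S(K(\cdot))$ d\'esigne bien la m\^eme op\'eration sur les deux entr\'ees. Ceci r\'esultera de l'hypoth\`ese de r\'ecurrence $(\mathcal H_k)$, car ces tests ne mettent en jeu que des d\'eriv\'ees en $x_0$ d'ordre born\'e, d\'ej\`a contenues dans le $(\ell+1-k)$-jet commun. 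Une fois ce lemme cl\'e \'etabli, l'application avec $m+1=\ell+1-k$ boucle la r\'ecurrence et conclut la preuve.
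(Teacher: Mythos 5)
Votre plan suit essentiellement la même démarche que la preuve du papier : une récurrence finie sur $k$ montrant que $T_k$ et $F_k$ coïncident au jet d'ordre $\ell+1-k$ en $x_0$ (le papier dit que $T_k$ est la série tronquée de $F_k$ à l'ordre $p-k$), la conclusion venant de ce que l'opérateur de Newton ne dépend que du $1$-jet. Vous explicitez davantage que le papier le lemme clé (un ordre de dérivation perdu par étape $S\circ K$) et la cohérence des décisions algorithmiques sur $F_k$ et $T_k$, points que le papier expédie par « il est aisé de voir » ; c'est la même preuve, simplement plus détaillée.
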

 \begin{proof}
 Comme $T_0$ est la s\'erie tronqu\'ee \`a l'ordre $\ell$ de $F_0$, par
 construction il est ais\'e de voir que pour tout $k=0:\ell$, $T_k$ est
 la s\'erie tronqu\'ee de $F_k$ \`a l'ordre $p-k$. Il s'ensuit la conclusion.
 \end{proof}
 \begin{table}
 $$\fbox{
 \begin{minipage}{1\textwidth }
$$ \textbf{Newton singulier}$$
\begin{enumerate}[1-]
\item \quad  Entr\'ees~: $x_0\in \C^n$, $f\in\A2(x_0,R_{x_0})^s$
\item \quad $    \dfl(f)=\textsf{syst\`eme d\'eflat\'e}(f)$
\item \quad Sortie~: Si $\dis \dfl(f)\ne \emptyset$ alors
  $N_{\dfl(f)}(x_0)$ sinon $x_0$\index{$N_{\dfl(f)}$}
\end{enumerate}
    \end{minipage}
 }
 $$
 \caption{}\label{Ndflx0}
 \end{table}
\section{ $\,$La multiplicit\'e chute strictement lors de la
    d\'eflation}
\label{Mult-drops-kerneling}
Dans cette section nous d\'emontrons que la suite de d\'eflation stationne
apr\`es un indice fini. Commen\c cons par le montrer dans le cas de la d\'eflation exacte. La proposition suivante montre que la multiplicit\'e chute strictement lors d'une op\'eration de d\'enoyautage.
\begin{tm}\label{drop_schur}
Supposons que le rang de $Df(\zeta)$ soit \'egal \`a $r$ et que 
$$Df(x):=\left (
\begin{array}{cc}
A(x)&B(x)\\C(x)&D(x)
\end{array}
\right )
$$ 
o\`u $A(\zeta)\in \C^{r\times r}$ est inversible. Alors la multiplicit\'e
de $\zeta$ comme racine de $K(f)$ est strictement plus petite que la
multiplicit\'e de $\zeta$ en tant que racine de $f$.
\end{tm}
\begin{proof}
Si $r=0$ alors le syst\`eme $K(f)$ est form\'e de toutes les d\'eriv\'ees
partielles $$\dis \frac{\partial f_i(x)}{\partial x_j },\quad 1\le j\le
n,\quad 1\le i\le s .$$
Alors la conclusion d\'ecoule du lemme~\ref{drop_K}.

Si $r>0$ le syst\`eme $K(f)$ est form\'e par $f_1,\ldots,f_r$ et les
\'el\'ements du compl\'ement de Schur $D(x)~-~C(x)~A(x)^{-1}~B(x)$.
De la proposition~\ref{schur}, les relations entre les lignes de la
matrice jacobienne sont $$(C(x),D(x))-C(x)A(x)^{-1}(A(x),B(x))=0.$$
Il est facile de voir que le syst\`eme $K(F)=0$ est analytiquement
\'equivalent en la racine $\zeta$ au syst\`eme suivant
  \begin{equation}\label{KF_eq}
  \left (f_1,\ldots, f_r, D f_{i}(x)-\sum_{j=1}^r\lambda_{ij}(x)D f_j(x) =0,\quad i=r+1:s\right )=0,
  \end{equation}
avec $( \lambda_{ij}(x))$ la $(s-r)\times r$-matrice $\left
  (C(x)A(x)^{-1} \right )$. 
\\ 
En appliquant le th\'eor\`eme des fonctions implicites, nous savons qu'il
existe un isomorphisme local $\Phi$ tel que
$$x_{1:r}-\zeta_{1:r}=f_{1:r}\circ \Phi.$$
En substituant $x_{1:r}-\zeta_{1:r}$ dans $f=0$ nous obtenons le syst\`eme
\begin{equation}\label{KF_eq1}
(x_1-\zeta_1,\ldots, x_r-\zeta_r,\, f_{r+1:s}\circ\Phi )=0.
\end{equation}
L'id\'eal engendr\'e par $f_{r+1:s}\circ\Phi$ contient seulement
les mon\^omes $x_i-\zeta_i$, $i=r+1:n$.
D'un autre c\^ot\'e remarquons que la multiplicit\'e de la racine $\zeta$
du syst\`eme~(\ref{KF_eq1}) n'a pas
chang\'e~: c'est aussi la multiplicit\'e de $\zeta_{r+1:n}$ comme racine
de $f_{r+1:s}\circ\Phi$. De plus, la multiplicit\'e de $\zeta$ comme
racine du syst\`eme~(\ref{KF_eq}) est \'egale \`a la multiplicit\'e de
$\zeta_{r+1:n}$ comme racine du syst\`eme $D(f_{r+1:s}\circ\Phi)$.
Nous appliquons maintenant le lemme~\ref{drop_K} au syst\`eme
$f_{r+1:s}\circ\Phi$ pour en d\'eduire que la multiplicit\'e chute.
\end{proof} 
\begin{pp}\label{schur}
Soit $\dis M=\left (\begin{array}{cc}
A&B \\C&D  
  \end{array}\right )\in \C^ {s\times n}$ of rang $r$ 
  o\`u $A\in\C^{r\times r}$ est inversible. Alors les relations entre
  les lignes (respectivement les colonnes) de $M$ sont donn\'ees par
  $$ (C,D)-CA^{-1}(A,B)=0,\quad
   (\textrm{respectivement $\dis  \left (\begin{array}{c}
B \\D  
  \end{array} \right )-\left (\begin{array}{c}
A \\C  
  \end{array} \right )A^{-1}B=0$} ).$$ 
\end{pp}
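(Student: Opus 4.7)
Mon plan consiste en un bref argument d'alg\`ebre lin\'eaire \'el\'ementaire. D'abord, je remarque que puisque $A\in\C^{r\times r}$ est inversible, les $r$ premi\`eres lignes de $M$, formant la matrice $(A,B)\in\C^{r\times n}$, sont lin\'eairement ind\'ependantes: en effet, l'inversibilit\'e de $A$ assure d\'ej\`a l'ind\'ependance de ces $r$ vecteurs. Comme $\rank(M)=r$, ces $r$ premi\`eres lignes engendrent la totalit\'e de l'espace ligne de $M$. Par cons\'equent il existe une unique matrice $\Lambda\in\C^{(s-r)\times r}$ telle que $(C,D)=\Lambda\,(A,B)$.

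Ensuite, en examinant le bloc de gauche de cette \'egalit\'e, j'obtiens $C=\Lambda A$, d'o\`u $\Lambda=CA^{-1}$ par inversibilit\'e de $A$. En substituant cette valeur dans l'\'egalit\'e globale, il vient $(C,D)=CA^{-1}(A,B)$, autrement dit $(C,D)-CA^{-1}(A,B)=0$: c'est pr\'ecis\'ement la relation annonc\'ee entre les lignes de $M$. Je soulignerai au passage que le bloc de droite de cette \'egalit\'e n'est rien d'autre que la nullit\'e $D-CA^{-1}B=0$ du compl\'ement de Schur, ce qui met en perspective le lien avec la d\'efinition de $\schur(M)$ utilis\'ee dans l'op\'erateur de d\'enoyautage.

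Pour les relations entre colonnes, j'utiliserai un argument parfaitement sym\'etrique: la matrice $\left(\begin{array}{c}A\\ C\end{array}\right)\in\C^{s\times r}$ est de rang $r$ par inversibilit\'e de $A$, et l'hypoth\`ese $\rank(M)=r$ entra\^\i ne que les $n-r$ derni\`eres colonnes $\left(\begin{array}{c}B\\ D\end{array}\right)$ appartiennent \`a l'espace colonne engendr\'e par $\left(\begin{array}{c}A\\ C\end{array}\right)$. Il existe donc une unique $\Gamma\in\C^{r\times (n-r)}$ telle que $\left(\begin{array}{c}B\\ D\end{array}\right)=\left(\begin{array}{c}A\\ C\end{array}\right)\Gamma$, et le bloc sup\'erieur $B=A\Gamma$ livre $\Gamma=A^{-1}B$, d'o\`u l'identit\'e souhait\'ee.

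Il n'y a pas d'obstacle significatif dans cette d\'emonstration: le r\'esultat est une reformulation standard du lien entre rang matriciel et compl\'ement de Schur. Le seul point \`a soigner est de bien articuler l'existence \emph{et} l'unicit\'e des coefficients $\Lambda$ et $\Gamma$, qui d\'ecoulent toutes deux de l'inversibilit\'e de $A$ combin\'ee \`a la condition $\rank(M)=r$; l'hypoth\`ese que le rang plein $r$ de $M$ est d\'ej\`a atteint par le sous-bloc principal contenant $A$ est cruciale pour que les relations se ferment.
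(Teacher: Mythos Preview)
Your proof is correct and takes essentially the same approach as the paper's. The paper's argument is more compressed: it notes that both the row relation $(C,D)-CA^{-1}(A,B)=0$ and the column relation reduce to the vanishing of the Schur complement $D-CA^{-1}B=0$, and then invokes the standard fact that $\rank(M)=r$ with $A$ invertible is equivalent to $\schur(M)=0$. Your version makes this last equivalence explicit by constructing the dependence coefficients $\Lambda=CA^{-1}$ (resp.\ $\Gamma=A^{-1}B$) directly from the row-space (resp.\ column-space) description, which is exactly the content behind the paper's terse appeal.
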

\begin{proof} La proposition vient de l'\'equivalence~: 
\\ $(C,D)-CA^{-1}(A,B)=0$ et $\dis \left (\begin{array}{c} B \\D
  \end{array} \right )-\left (\begin{array}{c}
A \\C  
  \end{array} \right )A^{-1}B=0$
  si et seulement si $D-CA^{-1}B=~0$. Comme le rang de la matrice $M$
  est \'egal \`a $r$, c'est \'equivalent \`a $\schur (M)~=~0$.
\end{proof}
\begin{defin}
La valuation d'un syst\`eme analytique $f=(f_1,\ldots,f_s)$ en $\zeta$
est le minimum des valuations des $f_i$ en $\zeta$.
\end{defin}
\begin{req}
Un g\'en\'erateur de $I\C\{x-\zeta\}$ de valuation minimale peut toujours
\^etre pris comme \'el\'ement d'une base standard (minimale) de $I$.
\end{req}

C'est une cons\'equence d'une propri\'et\'e fondamentale des ordres
locaux~: la valuation d'une somme est toujours plus grande que la
valuation de chacun de ses termes.\\

Dans le cas d'un localis\'e d'un syst\`eme polynomial, la
construction d'une base standard de $I\C\{x-\zeta\}$ \`a partir d'un
ensemble de g\'en\'erateurs peut \^etre r\'ealis\'ee par l'algorithme du c\^one
tangent de Mora, par calcul successif de $S$-polyn\^omes et des
r\'eductions qui en constituent un cas particulier. La valuation ne peut
que cro\^\i tre lors des ces op\'erations, ce qui emp\^eche de r\'eduire
$S(f,g)$ par $f$ (ou $g$).
\begin{lm}\label{drop_K} 
Soit $\dis D^1 f(x):=\left (\frac{\partial f_i(x)}{\partial x_j
  },\quad 1\le j\le n,\quad 1\le i\le s\right )$. Supposons que
$\zeta$ soit un z\'ero isol\'e de $f$ et $D^1 f$. Alors la multiplicit\'e
de $\zeta$ comme racine de $D^1 f$ est strictement plus petite que
la multiplicit\'e de $\zeta$ comme racine de $f$.
\end{lm}
\begin{proof}
Prenons un des $f_k$, disons $f_i$, de valuation minimale en
$\zeta$. Cette valuation est au moins $2$. C'est donc qu'il existe un
indice $j$ tel que le terme dominant $\dis \frac{\partial
  f_i(x)}{\partial x_j }$ n'est pas dans l'id\'eal engendr\'e par
$f$. D'ou la conclusion.
\end{proof}
L'op\'erateur de s\'election fait chuter la multiplicit\'e comme l'\'enonce la
proposition ci-dessous.
\begin{lm}\label{drop_dfl}
Soit $p$ la valuation de $f$ en $\zeta$. Consid\'erons le syst\`eme 
$$\dis
D^{p-1}f(x)~:~=~\left (\frac{\partial^{|\alpha|}f_i(x)}{\partial x^\alpha
},\, |\alpha|= p-1,\, 1\le i\le s\right ).$$ Supposons que $p\ge
2$ et que le rang de $D^pf(\zeta)$ soit \'egal \`a $r$.
Alors la multiplicit\'e de $\zeta$ comme racine de $ D^{p-1}f(x)=0$ est
strictement plus petite que la multiplicit\'e de $\zeta$ comme racine de
$f$. Plus pr\'ecis\'ement la multiplicit\'e de la racine $\zeta$ chute d'au
moins $p^r$.
\end{lm}
\begin{proof}
Comme la valuation est $p\ge 2$ alors $\dis f(x)=\sum_{k\ge
  p}\frac{1}{k!}D^kf(\zeta)(x-\zeta)^k$ avec $D^pf(\zeta)\ne 0$. Les
mon\^omes de $LT(f)$ sont de type $(x-\zeta)^\alpha$ avec $|\alpha|\ge
p\ge 2$. Donc le nombre de mon\^omes standards de $\dis
\C\{x-\zeta\}/LT(f)$ est plus grand que $p^n$. Comme le rang de la
d\'eriv\'ee de $D^{p-1}f(x)$ en $\zeta$ est $r>0$, nous pouvons supposer
sans perte de g\'en\'eralit\'e que $x_1-\zeta_1,\ldots, x_r-\zeta_r$ sont
dans l'id\'eal $LT(D^{p-1}f(x))$, et donc par cons\'equent le nombre de
mon\^omes standard chute d'au moins $p^r$.
\end{proof}
Lorsque la d\'eflation est effectu\'ee en un point $x_0$ suffisament 
proche de $\zeta$ l'\'evaluation en $f(x_0)$          sera petite. La section~\ref{evaluation_sec} quantifie la proximit\'e de $x_0$ \`a $\zeta$.
Le r\'esultat ci-dessous d\'etermine le rayon d'une boule centr\'ee en $\zeta$
dans laquelle le rang num\'erique de $Df(x_0)$ est identique \`a celui de $Df(\zeta)$.
\begin{pp}
Supposons que le rang de $Df(\zeta)$ est \'egal \`a $r$
et que ses valeurs singuli\`eres v\'erifient
$$\sigma_1(\zeta)\ge\ldots\ge\sigma_r(\zeta)>
\sigma_{r+1}(\zeta)=\ldots=\sigma_n(\zeta)=0.$$
Notons par $$\bar{\gamma}(f,\zeta)=\sup_{k\ge 2}\left (
\frac{\|D^kf(\zeta)\|}{k!}\right )^{\frac{1}{k-1}}.
\index{$\bar{\gamma}(f,\zeta )$}
$$
Soit $\dis 0\le \epsilon<
\min\left (2-\sqrt{2},\,\frac{\sigma_r(\zeta)}{2}\right).$
Pour tout $\dis x_0\in B\left (\zeta,\frac{\epsilon}{2\bar{\gamma}(f,\zeta)}\right )$
le $\varepsilon$-rang  de $Df(x_0)$ est \'egal au rang de $Df(\zeta)$.
\end{pp}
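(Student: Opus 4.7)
The plan is to reduce the $\varepsilon$-rang statement to a perturbation estimate on $\|Df(x_0) - Df(\zeta)\|$ and then to control that perturbation via a Taylor expansion governed by $\bar{\gamma}(f,\zeta)$. First I would invoke Weyl's classical perturbation inequality for singular values: for any two matrices $A,B$ of the same size, $|\sigma_i(A) - \sigma_i(B)| \leq \|A-B\|$. Applied with $A = Df(x_0)$, $B = Df(\zeta)$, and using the hypothesis $\sigma_{r+1}(Df(\zeta)) = \cdots = \sigma_n(Df(\zeta)) = 0$, this yields
$$\sigma_i(Df(x_0)) \geq \sigma_r(\zeta) - \|Df(x_0)-Df(\zeta)\| \quad (i \leq r), \qquad \sigma_i(Df(x_0)) \leq \|Df(x_0)-Df(\zeta)\| \quad (i > r).$$
Hence it suffices to show $\|Df(x_0) - Df(\zeta)\| \leq \varepsilon$: combined with $\varepsilon < \sigma_r(\zeta)/2$ this forces $\sigma_i(Df(x_0)) > \varepsilon$ for $i \leq r$ and $\sigma_i(Df(x_0)) \leq \varepsilon$ for $i > r$, which is precisely the definition of $\varepsilon$-rang equal to $r$ in the sense of Definition~\ref{defi_numerical_rank}.

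Second, I would estimate the Jacobian perturbation by a Taylor expansion around $\zeta$. Setting $h = x_0 - \zeta$, differentiating the analytic expansion of $f$ at $\zeta$ gives
$$Df(\zeta+h) - Df(\zeta) = \sum_{k \geq 2} \frac{1}{(k-1)!}\, D^k f(\zeta)(h^{k-1},\cdot).$$
Taking norms and using that $\|D^k f(\zeta)\|/k! \leq \bar{\gamma}(f,\zeta)^{k-1}$ for every $k \geq 2$ (by definition of $\bar{\gamma}$) leads to
$$\|Df(x_0) - Df(\zeta)\| \leq \sum_{k \geq 2} k\, u^{k-1} = \frac{1}{(1-u)^2} - 1 = \frac{u(2-u)}{(1-u)^2},$$
where $u = \bar{\gamma}(f,\zeta)\,\|h\|$. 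The hypothesis $\|h\| < \varepsilon/(2\bar{\gamma}(f,\zeta))$ enforces $u < \varepsilon/2$, and the threshold $\varepsilon < 2-\sqrt{2}$ is precisely calibrated so that $u < 1 - \sqrt{2}/2$, the positive root of $u(2-u) = (1-u)^2$, keeping the series majorant under control. The conclusion $\|Df(x_0) - Df(\zeta)\| \leq \varepsilon$ then follows by the monotonicity of $u \mapsto u(2-u)/(1-u)^2$ on $[0,1)$, and the proposition is finished by combining with step one.

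The main obstacle will be the simultaneous use of the two roles played by $\varepsilon$: the constraint $\varepsilon < 2-\sqrt{2}$ keeps the Taylor tail convergent and controllable, while $\varepsilon < \sigma_r(\zeta)/2$ ensures that the first $r$ singular values of $Df(x_0)$ remain strictly above $\varepsilon$. A subtler point that deserves care is that the crude majorant $u(2-u)/(1-u)^2$ is dominated by its leading term $2u$ only when $u$ is small; some refinement of the geometric bound, or a slightly tighter choice of radius than the generous $\varepsilon/(2\bar{\gamma}(f,\zeta))$, may be needed to pin down the implication $u < \varepsilon/2 \Rightarrow u(2-u)/(1-u)^2 \leq \varepsilon$ at the stated threshold. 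Once this quantitative calibration is secured, both singular-value dichotomies of the first paragraph follow at once and the result is established.
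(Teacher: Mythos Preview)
Your approach is exactly the paper's: Weyl's perturbation inequality for singular values, followed by a Taylor bound on $\|Df(x_0)-Df(\zeta)\|$ controlled through $\bar\gamma(f,\zeta)$, and then the two-sided comparison with $\varepsilon$ and $\sigma_r(\zeta)-\varepsilon$.

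The calibration you flag as an obstacle is handled in the paper by writing the Taylor majorant with coefficient $(k-1)$ rather than your $k$:
\[
\|Df(x_0)-Df(\zeta)\|\le \sum_{k\ge 2}(k-1)\,\frac{\|D^kf(\zeta)\|}{k!}\,\|x_0-\zeta\|^{k-1}
<\sum_{k\ge 2}(k-1)\Bigl(\tfrac{\varepsilon}{2}\Bigr)^{k-1}
=\frac{\varepsilon/2}{(1-\varepsilon/2)^2},
\]
and $\dfrac{\varepsilon/2}{(1-\varepsilon/2)^2}<\varepsilon$ is equivalent to $(1-\varepsilon/2)^2>1/2$, i.e.\ $\varepsilon<2-\sqrt 2$; so with that coefficient the threshold is exact and no refinement is needed. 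Your coefficient $k$ (yielding $\frac{u(2-u)}{(1-u)^2}$) is in fact the one that comes from differentiating the Taylor series of $f$ term by term, so your suspicion is well founded: with $k$ the inequality does not close at the stated radius, and the paper's $(k-1)$ appears to be a slip---likely imported from the distinct identity $Df(x)(x-\zeta)-f(x)=\sum_{k\ge 2}(k-1)\frac{1}{k!}D^kf(\zeta)(x-\zeta)^k$ used later in the $\gamma$-theorem.
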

\begin{proof}
Notons par $\sigma_k(x_0)$ les valeurs singuli\`eres de $Df(x_0)$. Nous avons successivement pour  $k=1:n$ :
\begin{align*}
|\sigma_k(x_0)-\sigma_k(\zeta)|&\le 
\|   Df(x_0)-Df(\zeta)                 \|
\quad\quad \textrm{par le th\'eor\`eme de  Weyl~\cite{weyl1912} }
\\
&\le 
\sum_{k\ge 2}(k-1)\frac{\|D^kf(\zeta)\|}{k!}\|x_0-\zeta\|^{k-1}
\\
&< \sum_{k\ge 2}(k-1)\left (\frac{\epsilon}{2}\right )^{k-1}
\quad\quad \textrm{puisque  $\|x_0-\zeta\|< \frac{\epsilon}{2\bar{\gamma}(f,\zeta)}$ }
\\
&< \frac{\epsilon/2}{(1-\epsilon/2)^2}
\\
&< \epsilon\quad\quad \textrm{puisque $\epsilon< 2-\sqrt{2}$ }
\end{align*}
Puisque $\sigma_r(\zeta)> 2\epsilon$ nous en d\'eduisons que pour $k\le r$ 
$$\epsilon<\sigma_r(\zeta)-\epsilon< \sigma_k(\zeta)-\epsilon<\sigma_k(x_0).$$
D'autre part nous avons pour $k>r$  :
$$\sigma_k(x_0)<\sigma_k(\zeta)+\epsilon=\epsilon.$$
Finalement nous avons pour $k\le r$ et $j>r$
$$\sigma_j(x_0)<\epsilon<\sigma_k(x_0).$$
Il s'ensuit que le $\varepsilon$-rang de $Df(x_0)$ est \'egal \`a $r$.
\end{proof}
\section{ $\,$Un nouvel $\alpha$-th\'eor\`eme fond\'e sur le noyau de Bergman}\label{sec_rouche_regulier} 
Dans cette partie nous consid\'erons comme pr\'ec\'edemment $\omega\in \C^n$ et $f\in\A2(\omega, R_{\omega})^s$. Pour $x\in B(\omega,R_\omega)$
nous introduisons les quantit\'es
\begin{eqnarray}
\dis \beta(f,x)&=&\|Df(x)^{-1}f(x)\|
\label{eq_betafx}\index{$\beta(f,x)$}
\\
\dis  \lambda(f,x)&=&\frac{\|f\|}{(1-\nu_x^2)^\frac{n+1}{2}}
\label{eq_lambdafx}\index{$\lambda(f,x)$}
 \\
\dis\kappa_{x}&=&\max\left (\,1,\,\frac{(n+1)}{R_{\omega}(1-\nu_{x}^2)}\right ) \label{eq_kappafx}
\\
\dis\mu(f,x)&=&\|Df(x)^{-1}\|\label{eq_mufx}
\index{$\mu(f,x)$}
\\
\nonumber \\
\dis\gamma(f,x)&=&\max\left (\,1,\,\lambda(f,x)\,\kappa_x\, \mu(f,x)\,\right )  \label{eq_gammafx}\index{$\gamma(f,x)$}
\\
\nonumber \\
\dis \alpha(f,x)&=&\beta(f,x)\,\kappa_{x}\label{eq_alphafx}
\index{$\alpha(f,x)$}.
\end{eqnarray}
Nous pouvons remarquer que les quantit\'es  $\gamma(f,x)$ et $\alpha(f,x)$ sont diff\'erentes de celles introduites dans la $\alpha$-th\'eorie de Shub-Smale.
Ce parti pris d'utiliser les m\^emes notations  est justifi\'e respectivement par les th\'eor\`emes~\ref{rouche_regular} et~\ref{new_gamma_tm}.
D'une part la quantit\'e $\alpha$ du th\'eor\`eme~\ref{rouche_regular}
est relative \`a l'existence d'une racine comme dans le classique $\alpha$-th\'eor\`eme de~\cite{BCSS98} page 164 . D'autre part la quantit\'e $\gamma$ du th\'eor\`eme~\ref{new_gamma_tm}
est relative au rayon d'une boule de convergence quadratique de la m\'ethode de Newton  comme dans le classique $\gamma$-th\'eor\`eme de~\cite{BCSS98} page 156. 
Nous pouvons aussi ajouter que la reproduction des fonctions analytiques de carr\'e int\'egrables par le noyau de Bergman conduit naturellement \`a consid\'erer respectivement les quantit\'es  $\gamma(f,x)$ et $\alpha(f,x)$.
\begin{tm}\label{rouche_regular}($\alpha$-th\'eor\`eme).
Soient $R_{\omega}>0$, $x_0\in B(\omega, R_{\omega})$, 
et $f=(f_1,\ldots,f_n) \in
(\A2(\omega, R_{\omega}))^n$.
Nous notons $\alpha$, $\beta$, $\lambda$, $\gamma$, $\mu$, $\kappa$ pour
$\alpha(f,x_0)$, etc~$\ldots$ 
respectivement d\'efinis ci-dessus.
\\
Supposons que 
$$\alpha< 2\gamma+1-\sqrt{(2\gamma+1)^2-1}.$$
Alors pour tout $\theta>0$ tel que $B(x_0,\theta)\subset
B(\omega,R_{\omega})$ et
$$
\frac{ \alpha+1 -\sqrt{(\alpha+1)^2-4\alpha(\gamma+1)} }{2( \gamma +1)}< 
u:=\kappa\theta < \frac{ 1}{\gamma+1 }$$
    $f$ poss\`ede une unique racine dans la boule $B(x_0,\theta)$. 
  \end{tm}
Avant de prouver ce th\'eor\`eme nous aurons besoin de la proposition suivante~:
\begin{pp}\label{Berg_F_DFk_w}
Pour tout $f\in \A2(\zeta,R_{\omega})^s$ nous avons
 $$\dis\forall k\ge 0,\quad  \frac{1}{k!}\|D^k{f}(x_0)\|\le
||f||\frac{(n+1)^k }{R_\omega^k\left (1-\nu_{x_0}^2\right
  )^{\frac{n+1}{2}+k}}.$$
\end{pp}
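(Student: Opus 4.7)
The statement is essentially a clean restatement of Proposition~\ref{Berg_F_DFK}, in which the factor $(n+1)\cdots(n+k)$ is replaced by $k!\,(n+1)^k$; so my plan is to start from Proposition~\ref{Berg_F_DFK} (valid for any $f\in(\A2(\omega,R_\omega))^s$ by the remark following it) and reduce everything to a purely combinatorial inequality.

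First, I would apply Proposition~\ref{Berg_F_DFK} at the point $x_0\in B(\omega,R_\omega)$, obtaining
$$\|D^k f(x_0)\|\le \|f\|\,\frac{(n+1)(n+2)\cdots(n+k)}{R_\omega^{k}(1-\nu_{x_0}^2)^{\frac{n+1}{2}+k}}.$$
Dividing by $k!$, the target inequality reduces to proving
$$\frac{(n+1)(n+2)\cdots(n+k)}{k!}\le (n+1)^k,\qquad k\ge 0.$$
The cases $k=0$ (trivial) and $k=1$ (equality) are immediate, so I focus on $k\ge 2$.

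The key combinatorial step uses the elementary inequality $n+j\le(n+1)\,j$ valid for all $j\ge 1$ whenever $n\ge 1$ (which holds in our setting since $n\ge 2$ in Section~\ref{functional}): this is equivalent to $n(j-1)\ge 0$. Taking the product of these bounds for $j=2,\ldots,k$ gives
$$(n+2)(n+3)\cdots(n+k)\le (n+1)^{k-1}\prod_{j=2}^{k}j=(n+1)^{k-1}\,k!,$$
and multiplying through by $n+1$ yields
$$(n+1)(n+2)\cdots(n+k)\le (n+1)^{k}\,k!,$$
which is exactly the sought inequality after dividing by $k!$.

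Combining this bound with the estimate from Proposition~\ref{Berg_F_DFK} produces the announced inequality. There is no real obstacle here; the only point requiring slight care is to make sure the bound $n+j\le(n+1)j$ is applied under the hypothesis $n\ge 1$ that is already part of the functional framework.
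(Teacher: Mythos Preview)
Your proof is correct and follows exactly the paper's approach: apply Proposition~\ref{Berg_F_DFK} and then use the combinatorial inequality $\frac{(n+1)\cdots(n+k)}{k!}\le (n+1)^k$. The only difference is that you supply a clean proof of this inequality via $n+j\le (n+1)j$, whereas the paper merely states it.
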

\begin{proof}
Il suffit de tenir compte de l'in\'egalit\'e
$$ \frac{(n+1)\ldots (n+k)}{k!}\le (n+1)^k$$
  dans la proposition~\ref{Berg_F_DFK}.
\end{proof}
Nous pouvons maintenant prouver le th\'eor\`eme~\ref{rouche_regular}.
\begin{proof}
Nous consid\'erons $Df(x_0)^{-1}f(x)=Df(x_0)^{-1}f(x_0)+g(x)$
avec
 $$\dis g(x)=x-x_0+\sum_{k\ge 2}\frac{1}{k!}
Df(x_0)^{-1}D^kf(x_0)(x-x_0)^k.$$
Nous remarquons premi\`erement que pour tout $x\in\C^n$ nous avons 
  \begin{eqnarray}\label{rouche_gx}
  \|g(x)\| &\ge &
  \|x-x_0 \|-\sum_{k\ge 2}\frac{1}{k!}
\|Df(x_0)^{-1} D^kf(x_0)\|\, \|x-x_0 \|^k\nonumber
\\
&
\ge &  \|x-x_0 \|-
\frac{\|f|\| \,\|Df(x_0)^{-1}\| }{(1-\nu_{x_0}^2)^\frac{n+1}{2}} 
\sum_{k\ge 2} \left (
\frac{(n+1)\,\|x-x_0 \|}{R_{\omega}(1 -\nu_{x_0}^2)}
  \right )^k
\quad \textsf{de la proposition~\ref{Berg_F_DFk_w}}\nonumber
\\
&
\ge
&
\frac{u}{\kappa} - \frac{\gamma}{\kappa}\sum_{k\ge 2} u^k
\quad \textsf{de la d\'efinition de $\gamma=\lambda\kappa\mu$ et $u=\kappa\,\|x-x_0\|$}\nonumber\\
  &
  \ge
  &
\frac{1}{\kappa}\left (u-\gamma \frac{u^2}{1-u}\right ).
   \end{eqnarray}
  Soit $\theta>0$.
   Le th\'eor\`eme de Rouch\'e \'enonce que les applications  analytiques
$Df(x_0)^{-1}f(x)$
et $g(x)$
ont le m\^eme nombre de racines, chacune d'elles compt\'ees avec leurs multiplicit\'es respectives, dans la boule $B(x_0,\theta)$ si l'in\'egalit\'e
$$\|Df(x_0)^{-1}f(x)-g(x)\|< \|g(x)\|$$
est satisfaite pour tout $x\in \partial B(x_0,\theta)$.
Tout d'abord montrons que $x_0$ est l'unique racine de $g(x)$
dans la boule $\dis B\left (x_0,\frac{1}{ \kappa( \gamma +1)}\right )$. En effet consid\'erons $y$ une racine de $g(x)$ distincte de $x_0$ dans la boule
 $B(\omega,R_{\omega})$.
Nous posons $v=\kappa \|y-x_0\|$. Si $v\ge 1$ alors
$\dis \|y-x_0\|\ge 
1/\kappa>\frac{1}{\kappa(\gamma +1)}$. Par hypoth\`ese nous savons que  $\dis \frac{1}{\kappa(\gamma +1)}> \theta$. Donc dans le cas o\`u $v\ge 1$ nous concluons que $y\notin B(x_0,\theta)$.
Sinon  $v<1$.
Nous d\'eduisons de l'in\'egalit\'e~(\ref{rouche_gx}) que
   $$\|g(y)\|=0\ge \frac{1}{\kappa} \left (v -\frac{\gamma  v^2}{1- v}\right 
). $$
   Donc $\dis\frac{1}{\gamma +1}\le v$. 
   Il s'ensuit que la distance entre les deux racines $x_{0}$ et $y$ de $g(x)$ est minor\'ee par
 $$\dis \|y-x_0\|\ge\frac{1}{\kappa (\gamma+1 )}>\theta. $$
 Ceci montre que $x_0$ est la seule racine de  $g(x)$
 dans la boule  $\dis  B\left (x_0,  \frac{1 }{    \kappa (\gamma+1 )} \right)$.
   \\
Maintenant nous consid\'erons $x\in B(\omega,R_{\omega})$ tel que $\dis \|x-x_0 
\|=\theta=\frac{u}{\kappa}$.
Alors $B(x_0,\theta)\subset B(\omega, 
R_{\omega})$. De l'in\'egalit\'e~(\ref{rouche_gx}) nous d\'eduisons que l'in\'egalit\'e
\begin{equation}\label{eq_rouche}
\beta:=\|Df(x_0)^{-1}f(x_0)\|< \frac{ 1 }{\kappa } \left (u 
-\frac{\gamma u^2}{1- u}\right )
\end{equation}
implique $\|Df(x_0)^{-1}f(x)-g(x)\|< \|g(x)\|$ sur la fronti\`ere de la boule $B(x_0,\theta)$.
Puisque $\alpha=\beta\kappa$, ceci est satisfait si le num\'erateur
$$(\gamma+1)u^2-(\alpha+1)u+\alpha$$
de l'expression pr\'ec\'edente~(\ref{eq_rouche}) est strictement n\'egative. Alors il est facile  de voir que sous la condition
$$\alpha:=\beta\kappa< 2\gamma+1-\sqrt{(2\gamma+1)^2-1}$$
le trin\^ome $(\gamma+1)u^2-(\alpha+1)u+\alpha$
poss\`ede deux racines \'egales \`a
\\ $\dis \frac{ \alpha+1 \pm\sqrt{(\alpha+1)^2-4\alpha(\gamma+1)} }{2( 
\gamma+1)}$. 
Donc pour tout $\theta$ tel que
$$ \frac{ \alpha+1 -\sqrt{(\alpha+1)^2-4\alpha(\gamma+1)} }{2( 
\gamma+1)}<  u:=\kappa\theta<\frac{ 1}{\gamma+1 }$$
nous avons $(\gamma+1)u^2-(\alpha+1)u+\alpha<0$.  Alors l'in\'egalit\'e~(\ref{eq_rouche}) est satisfaite et le syst\`eme
$f$ poss\`ede une unique racine dans la boule $ \dis B(x_0,\theta)$. Le th\'eor\`eme est d\'emontr\'e.
\end{proof}
\section{Un nouveau $\gamma$-th\'eor\`eme
fond\'e sur le noyau de Bergman		}\label{sec_NS_gamma}
Soit $f=(f_1,\ldots , f_n)$ un syst\`eme analytique r\'egulier en une de ces racines $\zeta$. Le rayon de la boule dans lequel la suite de Newton converge quadratiquement est contr\^ol\'e par la quantit\'e 
  $$\gamma(f,\zeta)=\sup_{k\ge 2}\left (
\frac{1}{k!}\|Df(\zeta)^{-1}D^kf(\zeta)\| \right )^{\frac{1}{k-1}}$$
introduite par M. Shub et S. Smale. Plus pr\'ecis\'ement nous avons le r\'esultat suivant appel\'e $\gamma$-th\'eor\`eme.
  \begin{tm} ($\gamma$-theorem de~\cite{BCSS98}). Soit
   $f(x)$ un syst\`eme analytique et  $\zeta$ une racine r\'eguli\`ere de $f(x)$. Soit $\dis
    R_\zeta=\frac{3-\sqrt{7}}{2\gamma(f,\zeta)}$. Alors pour tout $x_0\in 
B(\zeta,R_{\zeta})$
    la suite de Newton    $$x_{k+1}=x_k-Df(x_k)^{-1}f(x_k),\quad k\ge 0,$$
     converge quadratiquement vers $\zeta$.
  \end{tm}
Nous donnons ici une version d'un $\gamma$-th\'eor\`eme qui prend en compte le noyau de  Bergman pour reproduire les fonctions analytiques de carr\'e int\'egrables.
\begin{tm}\label{new_gamma_tm}($\gamma$-th\'eor\`eme).
Soit $\zeta$ une racine r\'eguli\`ere d'un syst\`eme analytique
$f=(f_1,\ldots,f_n)\in \A2( 
\omega ,R_{\omega})^n$.
Nous notons $\gamma$ et $\kappa$ pour $\gamma(f,\zeta)$ et $\kappa_\zeta$
respectivement  d\'efinis en~(\ref{eq_gammafx}) et~(\ref{eq_kappafx}).  
Alors pour tout  $x$ tel que
$$   \dis u:=\kappa \,\|x-\zeta\|
< \frac{2\gamma+1-\sqrt{ 4\gamma^2+3\gamma}}{\gamma+1}$$ 
la suite de  
Newton 
$$x_0=x,\quad x_{k+1}=N_f(x_k),\quad k\ge 0,$$
   converge quadratiquement vers $\zeta$. Plus pr\'ecis\'ement
$$\|x_k-\zeta|\le \left ( \frac{1}{2}\right )^{2^k-1}\,\|x-\zeta\|,\quad k\ge 0.$$
\end{tm}
\begin{proof} Nous utilisons la proposition~\ref{new_gamma_pp} ci-dessous
pour montrer par r\'ecurrence le r\'esultat. Le sch\'ema de la preuve est classique et peut \^etre trouv\'e dans in~\cite{BCSS98} page 158.
 L'hypoth\`ese $\dis   u<  \frac{2\gamma+1-\sqrt{ 
4\gamma^2+3\gamma}}{\gamma+1}$  implique que
$\dis   \frac{\gamma u}{(1+\gamma)(1-u)^2-\gamma}\le~\frac{1}{2 }$. C'est une condition suffisante  pour la convergence quadratique
de la suite de Newton 
avec une raison  de $\dis \frac{1}{2}$.
\end{proof}
\begin{pp}\label{new_gamma_pp}
Avec les notations du  th\'eor\`eme~\ref{new_gamma_tm} nous avons :
\begin{itemize}
\item[1--]   Pour tout $x$ satisfaisant $\dis u<
1-\sqrt{\frac{\gamma}{1+\gamma }}$,  $Df(x)$ est 
inversible. De plus nous avons 
$$\dis \|Df(x)^{-1}Df(\zeta)\|\le \frac{(1-u)^2}{(1+\gamma)\, 
(1-u)^2-\gamma};$$
\\
\item[2--] $\dis\| Df(\zeta)^{-1}\left (Df(x)(x-\zeta)-f(x)\right )\|\le
\frac{\gamma u^2}{(1-u)^2}
$;
\\
\\
\item[3--] $\dis \|N_f(x)-\zeta\|\le \frac{\gamma u^2}{ (1+\gamma)\, 
(1-u)^2-\gamma}.$
\end{itemize}
\end{pp}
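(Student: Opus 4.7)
La strat\'egie est d'adapter la preuve classique du $\gamma$-th\'eor\`eme de Shub--Smale au cadre du noyau de Bergman. L'ingr\'edient cl\'e que j'\'etablis d'abord est l'in\'egalit\'e
$$\frac{1}{k!}\|Df(\zeta)^{-1}D^k f(\zeta)\|\le \gamma\,\kappa^{k-1},\quad k\ge 2,$$
qui r\'esulte directement de la proposition~\ref{Berg_F_DFk_w}~: d'une part, $\mu(f,\zeta)\,\lambda(f,\zeta)\le \gamma/\kappa$ puisque $\gamma=\max(1,\lambda\kappa\mu)\ge \lambda\kappa\mu$ par~(\ref{eq_gammafx})~; d'autre part, $\frac{(n+1)^k}{R_{\omega}^k(1-\nu_\zeta^2)^k}\le \kappa^k$ par la d\'efinition~(\ref{eq_kappafx}) de $\kappa_\zeta$. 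Cette estimation-cl\'e jouera ici le r\^ole de l'in\'egalit\'e classique $\frac{1}{k!}\|Df(\zeta)^{-1}D^kf(\zeta)\|\le \gamma^{k-1}$ de Shub--Smale, modifi\'ee par un facteur $\gamma$ global et par les puissances de $\kappa$ issues du noyau reproduisant.

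Pour l'assertion 1, je d\'eveloppe $Df(x)-Df(\zeta)$ en s\'erie de Taylor autour de $\zeta$ puis, apr\`es composition \`a gauche par $Df(\zeta)^{-1}$ et sommation via l'estimation-cl\'e, j'obtiens
$$\|Df(\zeta)^{-1}(Df(x)-Df(\zeta))\|\le \gamma\,\frac{2u-u^2}{(1-u)^2}=1-\frac{(1+\gamma)(1-u)^2-\gamma}{(1-u)^2}$$
avec $u=\kappa\|x-\zeta\|$. L'hypoth\`ese $u<1-\sqrt{\gamma/(1+\gamma)}$ est pr\'ecis\'ement celle assurant $(1+\gamma)(1-u)^2>\gamma$, et une s\'erie de Neumann fournit alors l'inverse de $Df(\zeta)^{-1}Df(x)$ ainsi que la majoration annonc\'ee de $\|Df(x)^{-1}Df(\zeta)\|$. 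Pour l'assertion 2, l'identit\'e
$$Df(x)(x-\zeta)-f(x)=\sum_{k\ge 2}\frac{k-1}{k!}D^kf(\zeta)(x-\zeta)^k,$$
cons\'equence directe des d\'eveloppements de Taylor et de $f(\zeta)=0$, combin\'ee \`a l'estimation-cl\'e et \`a la sommation $\sum_{k\ge 2}(k-1)u^{k-1}=u/(1-u)^2$, donne la borne $\frac{\gamma u^2}{\kappa(1-u)^2}$, que l'on majore par $\frac{\gamma u^2}{(1-u)^2}$ gr\^ace \`a $\kappa\ge 1$. L'assertion 3 s'ensuit imm\'ediatement via l'\'ecriture $N_f(x)-\zeta=Df(x)^{-1}(Df(x)(x-\zeta)-f(x))$ en combinant les deux majorations pr\'ec\'edentes.

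La principale difficult\'e conceptuelle est le choix du bon param\`etre sans dimension $u=\kappa\|x-\zeta\|$ qui absorbe la contribution du noyau de Bergman. Une fois ce choix effectu\'e, les facteurs $\kappa^{k-1}$ se r\'earrangent naturellement dans les s\'eries g\'eom\'etriques $\sum(k+1)u^k$ et $\sum(k-1)u^{k-1}$ de la preuve classique, et le reste n'est qu'une v\'erification alg\'ebrique \'el\'ementaire, notamment l'identit\'e $(1-u)^2-\gamma u(2-u)=(1+\gamma)(1-u)^2-\gamma$ qui confirme que la condition $u<1-\sqrt{\gamma/(1+\gamma)}$ est exactement celle pour laquelle la s\'erie de Neumann converge.
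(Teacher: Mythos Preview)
Your proof is correct and follows essentially the same route as the paper's: Taylor expansion of $Df(x)$ and of $Df(x)(x-\zeta)-f(x)$ around $\zeta$, the key estimate $\frac{1}{k!}\|Df(\zeta)^{-1}D^kf(\zeta)\|\le\gamma\kappa^{k-1}$ via Proposition~\ref{Berg_F_DFk_w}, a Neumann series for part~1, and the factorisation $N_f(x)-\zeta=Df(x)^{-1}Df(\zeta)\cdot Df(\zeta)^{-1}(Df(x)(x-\zeta)-f(x))$ for part~3. You are in fact slightly more careful than the paper in part~2, obtaining the sharper bound $\frac{\gamma u^2}{\kappa(1-u)^2}$ before invoking $\kappa\ge 1$; the paper writes $\gamma\sum_{k\ge 2}(k-1)u^k$ directly, which tacitly absorbs the extra factor $1/\kappa$.
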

\begin{proof}\quad
\\
{\hspace{-5cm}\begin{itemize}
\item[1--]
Nous \'ecrivons
  $$\dis Df(\zeta)^{-1}Df(x)-I=\sum_{k\ge 1}
\binomial{k+1}{k}Df(\zeta)^{-1}\frac{D^{k+1}f(\zeta)}{(k+1)!}(x-\zeta)^k.$$
De la   proposition~\ref{Berg_F_DFk_w}, il vient
 \begin{align*}
  \dis \frac{1}{(k+1)!}\|D^{k+1}f(\zeta)\|\,\|Df(\zeta)^{-1}\|
  &\le \frac{||f||\,\|Df(\zeta)^{-1}\|\,(n+1)^{k+1}}{R_{\omega}^{k+1}\left (1-\nu_{\zeta}^2\right )^{\frac{n+1}{2}+k+1}}
  \\\\
&\le \lambda\mu\kappa^{k+1}=\gamma\kappa^k.
\end{align*}
D'o\`u
  \begin{align*}
  \|Df(\zeta)^{-1}Df(x)-I\|&\le \gamma \sum_{k\ge 1}
\binomial{k+1}{k}\, \left (\kappa\,||x-\zeta\| \right )^k
\\
&\le
\gamma\left (\frac{1}{(1-u)^2}-1\right )
\end{align*}
avec $\dis u=\kappa\|x-\zeta\| .$
 Alors gr\^ace au lemme de Von Neumann, voir par exemple \cite{Kato13} page 30,
 l'assertion 1 suit facilement.
\item[2--] Nous avons
$\dis Df(x)(x-\zeta)-f(x)=\sum_{k\ge 2}(k-1)\frac{1}{k!}
D^kf(\zeta)(x-\zeta)^k$. Donc, utilisant de nouveau la
proposition~\ref{Berg_F_DFk_w}  
un calcul direct conduit \`a
\begin{align*}
\| Df(\zeta)^{-1}\left (Df(x)(x-\zeta)-f(x)\right )\|
&\le \gamma
\sum_{k\ge 2}(k-1)\left (\kappa\|x-\zeta\|\right )^k
\\
&\le
\frac{\gamma u^2}{(1-u)^2}.
\end{align*}
Ceci prouve l'assertion 2.
\item[3--] Nous avons
$$N_f(x)-\zeta=Df(x)^{-1}Df(\zeta)\, Df(\zeta)^{-1}(Df(x)(x-\zeta)-f(x)).$$
Des  items 1 et 2, nous d\'eduisons le r\'esultat.
\end{itemize}
  }
\end{proof}
\section{Estimation de la quantit\'e $\gamma$ du syst\`eme d\'eflat\'e}\label{sec_estim_gamma}\label{sec-gammal}
Nous consid\'erons les notations introduites pr\'ec\'edemment 
o\`u $\zeta\in B(\omega,R_\omega)$ est une racine du syst\`eme $F\in \A2(\omega,R_\omega)^s$.
Nous notons $\dis [F]_\zeta=\sum_{k\ge 0 }\frac{1}{k!}\|D^kF(\zeta)\|\,\|x-\zeta\|^k$.
\begin{lm}\label{estim_F_Berg}
Soient $\dis \kappa:=\kappa_\zeta$,
$\dis \lambda=\frac{\|F\|}{(1-\nu_\zeta^2)^\frac{n+1}{2}}$ et $F\in \A2(w,R_\omega)^s$
tel que
 $\dis F(x)= \sum_{k\ge p }\frac{1}{k!}D^kF(\zeta)(x-\zeta)^k$
 avec  $p\ge 1$. Nous notons
  $u=\kappa \|x-\zeta\|$.  Alors
$$[F]_\zeta\le \frac{\lambda u^p}{1- u }.$$
\end{lm}
\begin{proof}
De la majoration $\dis \frac{1}{k!}\|D^k{F}(\zeta)\|\le
||F||\frac{(n+1)^k }{R_\omega^k\left (1-\nu_{\zeta}^2\right )^{\frac{n+1}{2}+k}}=\lambda \kappa^k	$ donn\'ee par la 
proposition~\ref{Berg_F_DFk_w}, 
nous avons successivement :
\begin{align*}
[F]_\zeta & \le \lambda u^p\sum_{k\ge 0}u^k
\\
& \le 
\frac{\lambda u^p}{1-u}.
\end{align*}
\end{proof} 
\begin{lm}\label{pt}
 Soient $t\in [0,1[$ et $p\ge 1$.
$$\sum_{k\ge 0 }\binomial{p-1+k}{k}t^k=\frac{1}{(1-t)^p} .$$
\end{lm}
 \begin{proof}
Par r\'ecurrence. C'est vrai pour $p=1$. Supposons-le au cran $p$. Alors
 $$\sum_{k\ge 1 }k\binomial{p-1+k}{k}t^{k-1}=
 \frac{p}{(1-t)^{p+1}}$$
 et
 $$\sum_{k\ge 0 }\frac {k+1}{p }\binomial{p+k}{k+1}t^{k}=
 \frac{1}{(1-t)^{p+1}}.$$
 Donc
  $$\sum_{k\ge 0 }\binomial{p+k}{k}t^{k}=
 \frac{1}{(1-t)^{p+1}}.$$
 \end{proof} 
 \begin{lm}\label{mt}
Pour tout $p \ge 1$ et $u\in[0,2/(p +1)[$ nous avons
$$\frac{1}{(1-u)^{p }}-1\le \frac{p \,u}{1-\frac{p +1}{2}u}.$$
\end{lm}
\begin{proof}
L'in\'egalit\'e est vraie pour 	$p =1$. Supposons-la pour $p$ donn\'e. Puisque
$(1-p u)(1-u)=1-(p +1)u+p u^2\ge 1-(p +1)u$ nous avons successivement :
\begin{align*}
\frac{1}{(1-u)^{p +1}}-1&\le \left( \frac{p u}{1-\frac{p +1}{2}u}+1 \right )\frac{1}{1-u}-1\\
&\le \frac{(p +1)u(1-u/2)}{(1-\frac{p +1}{2}u)(1-u)}
 \end{align*}
De plus, pour $u\in[0,2/(p +1)]$ nous avons :
\begin{align*}
\frac{1}{1-\frac{p +2}{2}u}-\frac{1-u/2}{(1-\frac{p +1}{2}u)(1-u)}&=
\frac{p u^2}{4(1-\frac{p +1}{2}u)(1-\frac{p +2}{2}u)(1-u)}  \ge 0.
\end{align*} 
Il s'ensuit :
\begin{align*}
\frac{1}{(1-u)^{p +1}}-1 
&\le \frac{(p +1)u}{1-\frac{p +2}{2}u}.
 \end{align*}
 L'in\'egalit\'e est vraie au cran $p+1$. Le lemme est d\'emontr\'e.
 \end{proof}
\begin{lm}\label{estim_DFp_Berg} 
Soit $p\ge 1.$ Avec les hypoth\`eses du lemme~\ref{estim_F_Berg} nous avons~:
$$ \left [\frac{1}{(p-1)!}\left (
	D^{p-1}F 
	-
	  D^{p-1}F(\zeta)\right ) \right ]_\zeta
\le
\dis  \lambda\kappa^{p-1} \left (\frac{1}{\left (
1- u \right )^{p}}-1\right )\le \lambda\kappa^{p-1}\frac{p \,u}{1-\frac{p +1}{2}u}.$$ 
\end{lm}
\begin{proof}
En proc\'edant comme dans la preuve du lemme~\ref{estim_F_Berg} nous
obtenons successivement :
\begin{align*} 
	\left [\frac{1}{p-1)!}\left (
	D^{p-1}F\right . \right .&
	-
	\left .\left . D^{p-1}F(\zeta)\right ) \right ]_\zeta
\le \sum_{k\ge 1}\frac{(p-1+k)!}{(p-1)!\, k! }
\frac{
\|D^{p-1+k}F(\zeta)||}{(p-1+k)!}\|x-\zeta\|^k
\\
&\le\lambda\kappa^{p-1}\sum_{k\ge 1}\binomial{p-1+k}{k} u^{k}
\\
&\le   \lambda\kappa^{p-1} \left (\frac{1}{\left (
1- u \right )^{p}}-1\right ) \textsf{\qquad(par le lemme~\ref{pt})}
\\
&\le   \lambda\kappa^{p-1} \frac{p \,u}{1-\frac{p +1}{2}u}
\textsf{\qquad(par le lemme~\ref{mt})}.
\end{align*}
\end{proof}
\begin{lm}\label{estim_inverse_DF_Berg}
Soient $r>0$ et $F=(F_{1:r},F_{r+1:s})\in \A2(\omega,R_\omega)^s$ tels que  
$DF(\zeta)$ soit de rang $r$ et $D_{1:r}F_{1:r }(\zeta)$ soit inversible.
\\
Nous notons $\dis \kappa:=\kappa_\zeta$,
$\dis \lambda=\frac{\|F\|}{(1-\nu_\zeta^2)^\frac{n+1}{2}}$
, $\mu=\|D_{1:r}F_{1:r}^{-1}(\zeta)\|$ et
$\gamma=\lambda\kappa\mu$. Nous introduisons aussi
$u=\kappa \|x- \zeta\|$. Pour tout $x\in B(\omega,R_\omega)$
tel que $\dis u<1-\sqrt{\frac{\gamma}{1+\gamma}}$
il s'ensuit que $D_{1:r}F_{1:r}(x)$ est inversible.
De plus nous avons la majoration :
 $$[D_{1:r}F_{1:r}^{-1} -D_{1:r}F_{1:r}(\zeta)^{-1}]_\zeta \le \frac{ \gamma\mu\,v\,u}{1-\gamma\, v\,u }$$
 o\`u $ \dis v=\frac{2-u}{(1-u)^2}.$
\end{lm}
\begin{proof} 
Nous avons $\dis D_{1:r}F_{1:r}(x)=D_{1:r}F_{1:r}(\zeta)+\sum_{k\ge 1}\frac{1}{k!}D^k\left ((D_{1:r}F_{1:r}\right )(\zeta)(x -\zeta)^k$.
Puisque $D_{1:r}F_{1:r}(\zeta)$ est inversible et que 
$\dis \|D^k\left ((D_{1:r}F_{1:r}\right )(\zeta)\|\le 
\|D_{1:r}^{k+1}F_{1:r}(\zeta)\|$ nous pouvons \'ecrire en proc\'edant
comme dans la preuve du lemme~\ref{estim_DFp_Berg} :
\begin{align*}
||E\|:=\|D_{1:r}F_{1:r}(\zeta)^{-1}D_{1:r}F_{1:r}(x)-I||& \le  \sum_{k\ge 1}\frac{1}{k!}\|D^k\left ((D_{1:r}F_{1:r}\right )(\zeta)
\|\,\|x -\zeta)^k\|
\\
&\le  \mu
 \sum_{k\ge 1}\binomial{k+1}{k}
\frac{\|\,D^{k+1}F(w)||}{(k+1)!}|x-\zeta\|^k
\\
&\le \lambda\kappa\mu 
\sum_{k\ge 1} \binomial{k+1}{k} u^k
\\
&\le \gamma\left(\frac{1}{\left (1-u\right )^2}-1\right )
\\
&\le \frac{ \gamma (2-u)u}{(1-u)^2}=\gamma\, v\,u.
\end{align*}
La condition $u<1-\sqrt{\frac{\gamma}{1+\gamma}}$ implique $\|E\|\le  \gamma v<1$.
Donc $D_{1:r}F_{1:r}(x)$ est inversible.
De plus
 $D_{1:r}F_{1:r}(x)^{-1}=(I+E)^{-1}D_{1:r}F_{1:r}(\zeta)^{-1}.$
Alors nous avons
\begin{align*}
D_{1:r}F_{1:r}(x)^{-1} -D_{1:r}F_{1:r}^{-1}(\zeta)&=\left (\sum_{k\ge 1 } E^k\right )D_{1:r}F_{1:r}( \zeta)^{-1}. 
\end{align*}
Finalement
\begin{align*}
\|D_{1:r}F_{1:r}^{-1} -D_{1:r}F_{1:r}^{-1}(\zeta)\|&\le 
 \frac{\|D_{1:r}F_{1:r}(\zeta)^{-1}\|\,\|E\|}{1- \|E\|}
 \\
 &\le  \frac{ \gamma\mu\,v\,u}{1-\gamma\,v\,u} . 
\end{align*}
\end{proof}
\begin{pp}\label{estim_kerneling} 
 Soient  $\dis \kappa=\kappa_\zeta$,
$\dis \lambda=\frac{\|F\|}{(1-\nu_\zeta^2)^\frac{n+1}{2}}$
et $u=\kappa\|x-\zeta\|$. Soit $r$ le rang de $DF( \zeta)$. 
Nous
supposons que $r>0$ et que
$D_{1:r}F_{1:r}(\zeta)$
est inversible. Nous notons
$\mu=\|D_{1:r}F_{1:r}(\zeta)^{-1}\|$ et $ \gamma=\lambda\kappa\mu$.
Alors nous avons :
 \begin{align}\label{eq_KF}
 [K(F)]_\zeta\le \frac{ \lambda u}{1-u}+ \frac{\lambda\kappa\,(1+\gamma)^2\, v\, u}{1-\gamma\, v\, u}
 \end{align}
 o\`u $ \dis v=\frac{2-u}{(1-u)^2}.$
 \end{pp}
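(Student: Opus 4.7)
Le plan est de séparer la contribution des $r$ premières équations de celle du complément de Schur par l'inégalité triangulaire sur $[\bullet]_\zeta$:
\[
[K(F)]_\zeta \le [F_{1:r}]_\zeta + [\schur(DF)]_\zeta.
\]
Pour la première quantité, comme $\zeta$ est racine de $F$, chaque $F_i$ ($i=1:r$) s'annule en $\zeta$, et le lemme~\ref{estim_F_Berg} appliqué à $F$ avec $p=1$ fournit directement $[F_{1:r}]_\zeta \le \lambda u/(1-u)$, qui est exactement le premier terme attendu dans~(\ref{eq_KF}).

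Le travail principal porte sur le complément de Schur. En notant $A=D_{1:r}F_{1:r}$, $B=D_{r+1:n}F_{1:r}$, $C=D_{1:r}F_{r+1:s}$ et $D=D_{r+1:n}F_{r+1:s}$, la proposition~\ref{schur} donne $\schur(DF(\zeta))=D(\zeta)-C(\zeta)A(\zeta)^{-1}B(\zeta)=0$. Je soustrairais cette identité nulle à $\schur(DF(x))$ puis décomposerais le produit triple par télescopage:
\begin{align*}
\schur(DF(x)) = [D(x)-D(\zeta)]
&- [C(x)-C(\zeta)]\,A(x)^{-1}B(x)\\
&- C(\zeta)\,[A(x)^{-1}-A(\zeta)^{-1}]\,B(x)\\
&- C(\zeta)A(\zeta)^{-1}\,[B(x)-B(\zeta)].
\end{align*}
En prenant $[\bullet]_\zeta$ et en invoquant sa sous-multiplicativité (conséquence de la règle de Leibniz sur les coefficients de Taylor), on utilise ensuite trois estimations déjà disponibles: le lemme~\ref{estim_DFp_Berg} avec $p=2$ majore $[D-D(\zeta)]_\zeta$, $[C-C(\zeta)]_\zeta$ et $[B-B(\zeta)]_\zeta$ par $\lambda\kappa vu$; la proposition~\ref{Berg_F_DFK} avec $k=1$ borne $\|B(\zeta)\|, \|C(\zeta)\|, \|D(\zeta)\|$ par $\lambda\kappa$, d'où $[B]_\zeta, [C]_\zeta, [D]_\zeta \le \lambda\kappa(1+vu)$ par sous-additivité; enfin le lemme~\ref{estim_inverse_DF_Berg} fournit $[A^{-1}-A(\zeta)^{-1}]_\zeta \le \gamma\mu vu/(1-\gamma vu)$, et donc $[A^{-1}]_\zeta \le \mu/(1-\gamma vu)$.

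En substituant ces quatre bornes, en utilisant systématiquement $\lambda\kappa\mu=\gamma$ et en factorisant $\lambda\kappa vu(1+\gamma)$, les quatre contributions se regroupent sous la forme
\[
\lambda\kappa vu(1+\gamma)\Bigl[1+\frac{\gamma(1+vu)}{1-\gamma vu}\Bigr]
= \frac{\lambda\kappa(1+\gamma)^2 vu}{1-\gamma vu},
\]
qui est précisément le second terme de~(\ref{eq_KF}). Le seul point non automatique sera la simplification algébrique finale: l'identité $1-\gamma vu + \gamma(1+vu) = 1+\gamma$ provoque l'annulation du terme en $\gamma^2 vu$ au numérateur et fait apparaître le carré $(1+\gamma)^2$ dans la borne annoncée; c'est cette compensation exacte entre les contributions des deux termes centraux du télescopage qui constitue le cœur de la preuve.
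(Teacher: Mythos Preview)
Your proposal is correct and follows essentially the same route as the paper. The only cosmetic difference is the orientation of the telescoping of $CA^{-1}B-C(\zeta)A(\zeta)^{-1}B(\zeta)$: the paper peels off the increments in the order $B$, then $A^{-1}$, then $C$ (keeping the variable factors on the left), whereas you peel them off in the order $C$, then $A^{-1}$, then $B$ (keeping the constant factors on the left); the four resulting bounds are the same up to permutation and the final algebraic regrouping into $\dis\lambda\kappa vu(1+\gamma)\bigl[1+\gamma(1+vu)/(1-\gamma vu)\bigr]=\lambda\kappa(1+\gamma)^2vu/(1-\gamma vu)$ is identical.
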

 \begin{proof} 
  Nous pouvons \'ecrire :
$$DF(x)=\left (\begin{array}{cc}
D_{1:r}F_{1:r}(x)&D_{r+1:n}F_{1:r}(x)\\
D_{1:r}F_{r+1:m}(x)&D_{r+1:n}F_{r+1:m}(x)
\end{array}\right ):=\left   ( \begin{array}{cc}
A&B\\C&D
\end{array}\right ). $$
Nous avons $(D-CA^{-1}B)(\zeta)=0$.
Un cran de d\'eflation conduit \`a
    $K(F)=(F_{1:r},\vect (D-CA^{-1}B))$.
    Nous avons :
\begin{align*}
D-CA^{-1}B&=
 D-D(\zeta)+ (C(\zeta)-C)A^{-1}(\zeta)B(\zeta)
\\
&\quad + C(A^{-1 }(\zeta)-A^{-1})B(\zeta) + CA^{-1}(B(\zeta)-B).\quad  
(\textsf{puisque $(D-CA^{-1}B)(\zeta)=0$.)}
\end{align*}
Il s'ensuit :
\begin{align*}
 [D-CA^{-1}B]_\zeta&\le[D-D(\zeta)	]_\zeta
 \\
&\quad + [C-C(\zeta)]_\zeta \|A^{-1}(\zeta)\|\,\|B(\zeta)\|
\\
&\quad + [C]_\zeta\, [A^{-1 }-A^{-1}(\zeta)]_\zeta\,\|B(\zeta)\|
\\
&\quad   + [C]_\zeta[A^{-1}]_\zeta\,[B(\zeta)-B]_\zeta.
\end{align*}	
Rappelons les notations : $\dis \lambda=\frac{\|F\|}{(1-\nu^2_\zeta)^\frac{n+1}{2}}$, $\dis \kappa=\max\left (1,\,\frac{n+1}{R_\omega(1-\nu^2_\zeta)}\right ) $,
$\mu=\|A^{-1}(\zeta)\|$ et  $\dis v=\frac{2-u}{(1-u)^2}.$
\\
Nous avons les estimations successives :
\begin{align*}
 & 
 [D-D(\zeta)]_\zeta\le [DF-DF(\zeta)]_\zeta\le \lambda\kappa \, v\, u,
 \quad \textsf{du lemme ~\ref{estim_DFp_Berg} avec $p=2$,}
\\\\
&[A^{-1}(\zeta)(B-B(\zeta))]_\zeta,\quad[(C-C(\zeta))A^{-1}(\zeta)]_\zeta \le 
\lambda\kappa\mu\, v\, u=\gamma\,v\, u,
 \quad \textsf{du lemme ~\ref{estim_DFp_Berg} avec $p=2$}
\\\\
& \|B(\zeta)\|\le \|DF(\zeta)\|\le \lambda\kappa,
 \quad \textsf{de la proposition ~\ref{Berg_F_DFK} avec $k=1$}
\\\\
&[C]_\zeta\le ||DF(\zeta)||+[DF-DF(\zeta)]_\zeta\le \lambda\kappa
(1 + v\, u) ,
\\\\ 
& [A^{-1}-A(\zeta)^{-1}]_\zeta\le 
\frac{\gamma\mu\, v\, u}{1-\gamma\, v\, u} ,
 \quad \textsf{du lemme ~\ref{estim_inverse_DF_Berg}}
\\\\
&[A^{-1}]_\zeta\le \|A(\zeta)^{-1}\|+ [A^{-1}-A(\zeta)^{-1}]_\zeta
\le \mu + \frac{\gamma\mu\,v\, u}{1-\gamma\, v\, u}= \frac{\mu}{1-\gamma\, v\, u} . 
\end{align*}
Alors nous obtenons en tenant compte de ces majorations :
\begin{align*}
[D-CA^{-1}B]_\zeta
&\le  
 \lambda\kappa\, v\, u 
 +
 \lambda^2\, \kappa^2\, \mu v \, u
 +
\lambda\kappa\,(1+v\, u)
\frac{\gamma^2\, v\, u}{1-\gamma\, v\, u}
\\
&\quad
+
\lambda\kappa\,(1+v\, u)
\frac{\mu}{1-\gamma\, v\, u}
\lambda\kappa\,v\, u
\\
&\le  \frac{(1+\gamma)^2\lambda\kappa\,v\, u}{1-\gamma\, v\, u}.	
 \end{align*} 
D'un autre c\^ot\'e, le lemme~\ref{estim_F_Berg} avec $p=1$
implique
 \begin{align*}
 [F_{1:r}]_\zeta&\le \frac{ \lambda u}{1-u}. 
 \end{align*}
Nous en concluons que
 \begin{align*}
 [K(F)]_\zeta
 &\le 
 \frac{ \lambda u}{1-u}+
  \frac{\lambda\kappa\,(1+\gamma)^2\, v\, u}{1-\gamma\, v\, u}.
  \end{align*}
\end{proof}
\begin{pp}\label{estim_selection} 
 Soient  $\dis \kappa=\kappa_\zeta$,
$\dis \lambda=\frac{\|F\|}{(1-\nu_\zeta^2)^\frac{n+1}{2}}$
et $u=\kappa\|x-\zeta\|$. Soit $p$ la valuation de $F$ en $\zeta$.
Nous avons :
 $$[S(F)]_\zeta\le
 \lambda\kappa^{p-1}\frac{p \,u}{1-\frac{p +1}{2}u} .
 $$
 Si $\dis u\le \frac{1}{p+1}$ alors 
 \begin{align}\label{eq_SF}
  [S(F)]_\zeta\le
 \lambda\kappa^{p-1}\frac{2p}{p+1}\le 2\lambda\kappa^{p-1}.
 \end{align}
 \end{pp}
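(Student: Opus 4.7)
L'approche consiste à déduire la majoration voulue directement du lemme~\ref{estim_DFp_Berg} appliqué à la dérivée $D^{p-1}F$. Puisque $F$ est de valuation $p$ en $\zeta$, on a par définition $D^kF(\zeta)=0$ pour tout $k<p$, et en particulier $D^{p-1}F(\zeta)=0$. L'opérateur de sélection $S$ produit, par construction, la concaténation via $\vect(\cdot)$ des dérivées partielles non nulles $\partial^{p_i-1}f_i/\partial x^\alpha$ d'ordre $p_i-1\ge p-1$, qui sont précisément des composantes (évaluées sur des vecteurs de base) des tenseurs $D^{p_i-1}F_i$. La structure reproduisante du noyau de Bergman et les majorations de la proposition~\ref{Berg_F_DFk_w} permettent alors d'établir la comparaison
$$[S(F)]_\zeta\le \left[\frac{1}{(p-1)!}\,D^{p-1}F\right]_\zeta.$$

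On applique ensuite le lemme~\ref{estim_DFp_Berg} au système $F$. Comme $D^{p-1}F(\zeta)=0$, la majoration fournie par ce lemme se simplifie en
$$\left[\frac{1}{(p-1)!}\,D^{p-1}F\right]_\zeta\le \lambda\kappa^{p-1}\,\frac{pu}{1-\frac{p+1}{2}u},$$
ce qui, combiné avec la comparaison précédente, donne la première inégalité de la proposition. Pour la seconde, on observe que l'hypothèse $u\le 1/(p+1)$ entraîne $\frac{p+1}{2}u\le 1/2$, donc $1-\frac{p+1}{2}u\ge 1/2$, d'où
$$\frac{pu}{1-\frac{p+1}{2}u}\le 2pu\le \frac{2p}{p+1}\le 2,$$
ce qui conclut.

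Le point délicat est la justification rigoureuse de la comparaison $[S(F)]_\zeta\le [\frac{1}{(p-1)!}D^{p-1}F]_\zeta$ : le premier terme est la norme $[\cdot]_\zeta$ d'un vecteur de scalaires (obtenu par $\vect(\cdot)$), tandis que le second fait intervenir les normes opérateur des dérivées successives de la forme multilinéaire $D^{p-1}F$. Il faut vérifier que les coefficients multinomiaux produits par la décomposition de $D^{p-1+k}F(\zeta)$ sur la base des dérivées partielles sont absorbés par le facteur $\frac{1}{(p-1)!}$ et les estimations en $\lambda\kappa^{p-1+k}$ de la proposition~\ref{Berg_F_DFk_w}. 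Une fois cette identification établie, la preuve se ramène à une application directe du lemme~\ref{estim_DFp_Berg} et à un calcul algébrique élémentaire.
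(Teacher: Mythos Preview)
Votre plan de preuve suit exactement la démarche du papier, dont la démonstration se réduit à une seule phrase : \og Par construction de $S(F)$, c'est une conséquence directe du lemme~\ref{estim_DFp_Berg} \fg. Vous avez simplement explicité les étapes que le papier sous-entend : l'annulation de $D^{p-1}F(\zeta)$ par définition de la valuation, l'application du lemme~\ref{estim_DFp_Berg}, et le traitement arithmétique du cas $u\le 1/(p+1)$ (que vous menez correctement).

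Le \og point délicat \fg\ que vous relevez --- la comparaison précise entre $[S(F)]_\zeta$ et $\left[\frac{1}{(p-1)!}D^{p-1}F\right]_\zeta$ compte tenu des conventions de normes et du fait que les composantes $f_i$ peuvent avoir des valuations $p_i$ différentes --- n'est pas davantage justifié dans le papier ; celui-ci l'absorbe dans \og par construction \fg. Votre prudence sur ce point est donc légitime, mais elle n'indique pas une divergence d'approche : les deux preuves reposent sur la même identification implicite.
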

 \begin{proof}
Par construction de $S(F)$, c'est une cons\'equence directe du lemme~\ref{estim_DFp_Berg}.
 \end{proof}
Dor\'enavant nous supposerons
que chaque \'el\'ement $F_k$ de la suite de d\'eflation
\begin{align*}
F_0&=S(f)
\\
F_{k+1}&=S(\,K(F_k)\,),\quad k \ge 0.
\end{align*}
est de rang $r_k$. On sait par la  remarque~\ref{req_rang_dfl} que $r_k>0$.
Sans perte de g\'en\'eralit\'e nous pouvons dire que $D_{1:r_k}F_{k,1:r_k}$
est inversible.
\begin{tm}\label{estim_gammal}
Soient $f=(f_1,\ldots ,f_s)\in\A2(\omega,R_\omega)^s$ et $\zeta$ une racine de $f$.
Nous consid\'erons la suite de d\'eflation d'\'epaisseur $\ell$ de la d\'efinition~\ref{dfl_seq} :
 \begin{align*}
F_0&=S(f)
\\
F_{k+1}&=S(\,K(F_k)\,),\quad k \ge 0.
\end{align*}
Nous notons $p_0$ (respectivement, $p_k$) le maximum des valuations des \'equations du syst\`eme 
$f$ (respectivement, $K(F_k)$, $k=0:\ell-1$). Nous consid\'erons $p=\underset{k=0:\ell-1}{\max}p_k$.	
Soient  $\dis \kappa=\kappa_\zeta$ et
$\dis \lambda_k=\frac{\|F_k\|}{(1-\nu_\zeta^2)^\frac{n+1}{2}}$.
Soit $r_k$ le rang de $DF_k(\zeta)$ et $\mu =   \underset{k=0:\ell}{\max}||D_{1:r_k }F_{k,1:r_k}(\zeta)^{-1}||$.
Nous notons $\gamma_0=\frac{2p_0}{p_0+1}\lambda_0\kappa^{p_0}\mu $ et
$\gamma_k=\gamma(F_k,\zeta)$ pour $k\ge 1$. Nous consid\'erons $R>0$ tel que
$$\dis u:=\kappa R\le\min\left (\frac{1}{p+1},\quad
\frac{(1-\nu_\zeta^2)^{\frac{n+1}{2}} }{6(\ell +\gamma_0)\,\left (4\kappa^p(1+\ell+\gamma_0)+(1-\nu_\zeta^2)^{\frac{n+1}{2}}\right )}\,\right) .$$ Alors 
  la majoration 
\begin{equation}\label{gammal}
\gamma_\ell \le \ell + \gamma_0
\end{equation} 
 est vraie dans la boule $B(\zeta, R)$.
\end{tm}
\begin{proof}
La proposition~\ref{estim_selection} implique
$$\gamma(F_0,\zeta):=\lambda(F_0,\zeta)\kappa\mu\le \frac{2p_0}{p_0+1}\lambda_0\kappa^{p_0-1}\kappa\mu:=\gamma_0.$$ 
De la remarque~\ref{req_rang_dfl}
nous savons que $r_k\ge 1$.  
Nous proc\'edons par r\'ecurrence sur $k$ pour d\'emontrer l'in\'egalit\'e~(\ref{gammal}) qui est trivialement vraie pour $k=0$.
Supposons $\gamma_k\le  k+\gamma_0$ et montrons que $\gamma_{k+1}
\le 1+k+\gamma_0$.
Nous utilisons simultan\'ement les propositions~\ref{estim_kerneling} et ~\ref{estim_selection} pour \'ecrire
\begin{align}
\gamma_{k+1}
&\le \lambda_{k+1}\kappa\mu \nonumber		
\\
&\le
\frac{||F_{k+1}||}{(1-\nu_\zeta^2)^{\frac{n+1}{2}}}\kappa\mu \nonumber
\\&\le\frac{||S(K(F_{k}))||}{(1-\nu_\zeta^2)^{\frac{n+1}{2}}}\kappa\mu
\nonumber
\\
&\le \left ( \frac{\lambda_ku}{1-u}+\frac{\lambda_k\kappa(1+\gamma_k)^2vu	}{1-\gamma_kvu}\right  )\frac{2\kappa^p\,\mu}{(1-\nu_\zeta^2)^{\frac{n+1}{2}}}
 \qquad \textsf{des in\'egalit\'es~(\ref{eq_KF}), ~(\ref{eq_SF}) et $\dis v=\frac{2-u}{(1-u)^2}$		
  }\nonumber
\\
&\le  \left ( \frac{1}{1-u}+\frac{\kappa(1+\gamma_k)^2v}{1-\gamma_kvu}\right  )\frac{2\kappa^{p-1}\gamma_k u}{(1-\nu_\zeta^2)^{\frac{n+1}{2}}}
\qquad \textsf{puisque $\gamma_k=\max(1,\lambda_k\kappa\mu)$
  }\nonumber
  \\
&\le  \left ( \frac{1}{1-u}+\frac{\kappa(1+k+\gamma_0)^2v}{1-(k+\gamma_0)vu}\right  )\frac{2\kappa^{p-1}(k+\gamma_0) u}{(1-\nu_\zeta^2)^{\frac{n+1}{2}}}
\qquad \textsf{de l'hypoth\`ese de r\'ecurrence $\gamma_k\le k +\gamma_0$
  }\nonumber
\\
&\le 
  \left ( \frac{1}{1-u}+\frac{6\kappa(1+k + \gamma_0)^2}{(1-6(k+\gamma_0)u)}\right  )\frac{2\kappa^{p-1}(k+\gamma_0) u}{(1-\nu_\zeta^2)^{\frac{n+1}{2}}} \qquad \textsf{car $\dis u\le \frac{1}{p+1}\le \frac{1}{2}$ implique
  $\dis v\le 6$ }\label{gammak1}
  \\
  &\underset{\textrm{def}}{\le} U+V.\nonumber
  \end{align}
Le reste de la preuve consiste \`a montrer que les termes $U$ et $V$ sont plus petits que $(1+k+\gamma_0)/2$. Ainsi nous aurons $\gamma_{k+1}\le
1+k+\gamma_0$.
Nous avons
  \begin{align*}
  u\le \frac{(1+k+\gamma_0)(1-\nu_\zeta^2)^{\frac{n+1}{2}}}{(1+k+\gamma_0)(1-\nu_\zeta^2)^{\frac{n+1}{2}}+4\kappa^{p-1}(k+\gamma_0)}
\quad   \textrm{
implique
$\dis
U:= \frac{1}{1-u}\frac{2\kappa^{p-1}(k+\gamma_0) u}{(1-\nu_\zeta^2)^{\frac{n+1}{2}}}
 \le \frac{1}{2}(1+k+\gamma_0).
$
}
\end{align*}
Puisque $\kappa,\gamma_0,k\ge 1$, nous montrons par des majorations \'el\'ementaires  que :
$$
\frac{(1-\nu_\zeta^2)^{\frac{n+1}{2}} }{6(k +\gamma_0)\,\left (4\kappa^p(1+k+\gamma_0)+(1-\nu_\zeta^2)^{\frac{n+1}{2}}\right )}\le \frac{(1+k+\gamma_0)(1-\nu_\zeta^2)^{\frac{n+1}{2}}}{(1+k+\gamma_0)(1-\nu_\zeta^2)^{\frac{n+1}{2}}+4\kappa^{p-1}(k	+\gamma_0)}.
$$
Il s'ensuit que pour $\ell\ge k$ nous avons :
\begin{align}
  u\le  \frac{(1-\nu_\zeta^2)^{\frac{n+1}{2}} }{6(\ell +\gamma_0)\,
  \left (4\kappa^p(1+\ell+\gamma_0)+(1-\nu_\zeta^2)^{\frac{n+1}{2}}\right )}
\quad   \textrm{
implique
$\dis
 U:=\frac{1}{1-u}\frac{2\kappa^{p-1}(k+\gamma_0) u}{(1-\nu_\zeta^2)^{\frac{n+1}{2}}}
 \le \frac{1}{2}(1+k+\gamma_0).
$
}\label{ineq1_gammak1}
\end{align}
D'autre part un calcul direct \'etablit que :
\begin{align}
\hspace{-1.5cm}
u\le \frac{(1-\nu_\zeta^2)^{\frac{n+1}{2}} }{6(\ell +\gamma_0)(4\kappa^p(1+\ell+\gamma_0)+(1-\nu_\zeta^2)^{\frac{n+1}{2}})}
\quad   \textrm{
implique
$\dis
V:=\frac{12\kappa^p(1+k + \gamma_0)^2}{(1-6(k+\gamma_0)u)}\frac{(k+\gamma_0) u}{(1-\nu_\zeta^2)^{\frac{n+1}{2}}}\le \frac{1}{2}(1+k+\gamma_0)
$}\label{ineq2_gammak1}
\end{align}
En effet il est facile de voir que
pour $\dis u\le  \frac{(1-\nu_\zeta^2)^{\frac{n+1}{2}} }{6(k +\gamma_0)(4\kappa^p(1+k+\gamma_0)+(1-\nu_\zeta^2)^{\frac{n+1}{2}})}$
nous avons
$$\frac{2V}{1+k+\gamma_0}:=\frac{24\kappa^p(1+k + \gamma_0)}{(1-6(k+\gamma_0)u)}\frac{(k+\gamma_0) u}{(1-\nu_\zeta^2)^{\frac{n+1}{2}}}\le 1.$$
En tenant compte des in\'egalit\'es~(\ref{ineq1_gammak1}) et ~(\ref{ineq2_gammak1})
dans ~(\ref{gammak1})
il s'ensuit que $\gamma_{k+1}\le 1+k+\gamma_0$.
 \end{proof} 
\section{$\gamma$-th\'eor\`eme et $\alpha$-th\'eor\`eme pour un syst\`eme d\'eflat\'e}\label{sec_alpha_gamma}
Nous \'enon{\c{c}}ons un $\gamma-$th\'eor\`eme pour un syst\`eme d\'eflat\'e.
\begin{tm}\label{NS_gamma_tm}($\gamma$-th\'eor\`eme).
Soient $f \in \A2(\omega,R_{\omega})^s$ et $\zeta\in B(\omega, R_{\omega})$ une racine de $f$.
Soit $\ell$ l'\'epaisseur d'une suite de d\'eflation telle que pour tout $0\le k<\ell$, chaque \'el\'ement de la suite
 $F_0=S(f)$, $F_{k+1}=S(K(F_{k}))$,   satisfait $F_k(\zeta)=0$ et 
 $r_k:=\rank(DF_k(\zeta))~<~n$. Soient $p_k$ pour $k=0:\ell-1$ et $\dis p=\underset{k=0:\ell-1}{\max}p_k$.
Nous notons $\kappa=\kappa_\zeta$,
 $\mu =   \underset{k=0:\ell }{\max}||D_{1:r_k }F_{k,1:r_k}(\zeta)^{-1}||$,
 $\dis \gamma_0:=\frac{2p_0}{p_0+1} \lambda(f,\zeta)\,\kappa^{p_0-1}\,\mu$  et $\gamma_\ell:=\gamma_0+\ell$.
Soit  $R$ tel que
 $$\kappa\,R:=\min\left (\frac{1}{p+1},\quad
\frac{(1-\nu_\zeta^2)^{\frac{n+1}{2}} }{6\gamma_\ell\,\left (4\kappa^p(\gamma_\ell+1)+(1-\nu_\zeta^2)^{\frac{n+1}{2}}\right )},\quad\frac{2\gamma_\ell+1-\sqrt{ 4\gamma_\ell^2+3\gamma_\ell}}{\gamma_\ell+1}\, 		
 \right ).$$
Alors pour tout  $x\in B(\zeta, R)$ la suite de Newton d\'efinie par la table~\ref{Ndflx0},
$$x_0=x,\quad x_{k+1}=N_{\dfl(f)}(x_k),\quad k\ge 0,$$
   converge quadratiquement vers $\zeta$.
\end{tm}
\begin{proof}
Le th\'eor\`eme~\ref{estim_gammal} montre que $\gamma(F_\ell,\zeta)\le \gamma_\ell$ dans la boule $B(\zeta,R)$.
Nous appliquons alors le th\'eor\`eme~\ref{new_gamma_tm} au syst\`eme $F_\ell$ avec $\gamma_\ell$.
\end{proof}
Nous donnons \'egalement un r\'esultat d'existence d'une racine singuli\`ere
reposant sur le th\'eor\`eme~\ref{rouche_regular}.
  \begin{tm}\label{rouche_deflation}
  Soit $f \in \A2(\omega,R_{\omega})^s$ and $x_0\in B(\omega,R_{\omega})$. Supposons qu'il existe une suite de d\'eflation
  $\dis (F_k)_{0\le k \le \ell}$
d'\'epaisseur $\ell$ en $x_0$.
Plus pr\'ecis\'ement
\begin{itemize}
\item[1--] Pour  tout $0\le k<\ell$ chaque \'el\'ement $F_0=S(f)$, $F_{k+1}=S(K(F_{k}))$ satisfait 
\begin{itemize}
\item[1.1--] $\dis \|F_k(x_0)\|\le  \eta_k:=\frac{2\alpha_0}{(n+1)(n+2)(R_{x_0}+\|F_k\|)R_{x_0}^{n-2}}$;
\item[1.2--] $DF_k(x_0)$ poss\`ede un  $\varepsilon_k$-rang num\'erique strictement inf\'erieur \`a $n$ o\`u $\varepsilon_k$ est le $\varepsilon $ donn\'e en ligne
$5$ de la table 1.
\end{itemize}
\item[2--] Le syst\`eme $\dfl(f)$ satisfait les hypoth\`eses de l'$\alpha$-th\'eor\`eme~\ref{rouche_regular} en  $x_0$. 
\end{itemize} 
Alors     $f$ a une seule racine dans la boule  $B(x_0,\theta)$
o\`u $\theta$ est d\'efini dans l'$\alpha$-th\'eor\`eme~\ref{rouche_regular}.
  \end{tm}
\section{Exemple}
Donnons un exemple afin d'illustrer les algorithmes exact et
num\'erique, en consid\'erant
$f(x,y)=(f_1(x,y),f_2(x,y))$ avec
$$f_1(x,y)=x^3/3+y^2x+x^2+2yx+y^2,\quad 
f_2(x,y)=x^2y-y^2x+x^2+2yx+y^2.$$
Le z\'ero $(0,0)$ est de multiplicit\'e $6$.
\subsection{Calculs exacts}\label{exemple_exact}
Nous avons
$$ Df(x,y)=\left (\begin{array}{cc}
x^2+y^ 2+2x+2y&2xy+2x+2y\\2xy-y^2+2x+2y&x^2-2xy+2x+2y
\end{array}\right ).$$
Le rang en $(0,0)$ de la matrice jacobienne est $0$. Donc le
  premier cran de la suite de d\'eflation consiste juste \`a remplacer
  chaque \'equation du syst\`eme initial par son gradient~:
$$F_0:=S(f)=(x^2+y^2+2x+2y, 2xy+2x+2y, 2xy-y^2+2x+2y,
x^2-2xy+2x+2y).$$
Les 4 lignes de la matrice jacobienne de $F_0$ sont~:
$$\left (\begin{array}{cc}
2x+2&2y+2\\2y+2&2x+2\\
2y+2&2x-2y+2\\
2x-2y+2&-2x+2
\end{array}\right ).$$
Le rang en $(0,0)$ de la matrice $DF_0(0,0)=\left (\begin{array}{cc}
2&2\\2&2\\2&2 \\2&2
\end{array}\right )$ est $1$.\\
Le compl\'ement de Schur de $DF_0(x,y)$ associ\'e \`a la sous-matrice $2x+2$ est
$$
\schur(DF_1(x,y))=\frac{2}{x+1}\left (\begin{array}{c}
2x-2y+x^2-y^2
\\
2x-3y+x^2-xy-y^2
\\
-x-x^2-xy+y^2.
\end{array}\right )
$$
Nous pouvons facilement v\'erifier que le syst\`eme
$F_1=(f_1,vec(\schur(DF_0(x,y))$ est r\'egulier et \'equivalent en $(0,0)$
\`a $f$. Remarquons que le syst\`eme tronqu\'e de $F_1$ \`a l'ordre $1$ 
$$2(x+y,\, 2x-2y,\, 2x-3y,\, -x)$$
est un syst\`eme r\'egulier \'equivalent en $(0,0)$ \`a $f$.
  \subsection{Calculs num\'eriques}\label{calc_num}
Un code Maple reproduisant les calculs ci-dessous est téléchargeable
\`a \textcolor{blue}{\underline{\url{https://perso.math.univ-toulouse.fr/yak/curriculum-vitae/}}}.
Nous donnons le comportement de la suite de d\'eflation. 
    \begin{itemize}
       \item[1--] Le point initial
$(x_0,y_0)=(-0.0005, 0.0006)$.
    \item[2--] Le syst\`eme
\begin{align*}
f= \left (
  \begin{array}{ccc}
  1/3\,{x}^{3}+{y}^{2}x+{x}^{2}+2\,xy+{y}^{2}
\\
{x}^{2}y-y^2x+{x}^{2}+2\,xy+{y}^{2}
  \end{array}
       \right ).
\end{align*}
\item[3--] La boule $B(x_0,y_0, R_{x_0,y_0}):=B(x_0,y_0,1)$.
  \item[4--]
S\'erie tronqu\'ee  $Tr(f):=Tr_{(x_0,y_0),3}(f)$ du syst\`eme
$f(x+x_0,y+y_0)$.
\begin{align*}
Tr(f)=\left (
  \begin{array}{ccc}
  0.00000000978+ 0.000201\,x+ 0.000199\,y+ 1.0\,{x}^{2}+ 2.0\,xy+ 1.0
\,{y}^{2}+ 0.333\,{x}^{3}+{y}^{2}x
\\ 0.0000000103+ 0.000201\,y+
 0.000199\,x+ 2.0\,xy+ 1.0\,{x}^{2}+ 1.0\,{y}^{2}+{x}^{2}y- 1.0\,{y}^{
2}x
 \end{array}
       \right )
\end{align*}
     \item[5--] Calcul de $F_0=S(Tr(f)).$
    \begin{align*} F_0=\left (
  \begin{array}{ccc}
  0.00019940+ 2.0012\,x+ 1.9990\,y+ 2.0\,xy
  \\
   0.00019904+ 1.9978\,y+
 2.0012\,x+ 2.0\,xy- 1.0\,{y}^{2}
 \\
  0.00020061+ 1.9990\,x+ 2.0012\,y+
 1.0\,{x}^{2}+{y}^{2}
 \\
  0.00020085+ 1.9978\,x+ 2.0010\,y+{x}^{2}- 2.0\,
xy
 \end{array}
       \right )
       \end{align*}
       Le tableau ci-dessous donne le d\'etail des tests effectu\'es par l'algorithme de s\'election qui conduisent \`a ce syst\`eme.
       $$
\hspace{-2cm} \begin{array}{|c|c|c|c|}
\hline
&&&\\
\textrm{Fonction} & \textrm{\'Evaluation en $(0,0)$}&\eta&F_0
\\\hline
\scr Tr(f)_1 & 10^{-8}   &0.014& 
\\
\hline
\scr\partial_y Tr(f)_1&2\times 10^{-4} &0.01& F_{01}
\\
\hline
\scr\partial^2_{yy} Tr(f)_1&1.99 &0.0078& 
\\
\hline
\scr\partial^2_{xy} Tr(f)_1&2.012 &0.0078& 
\\
\hline
\scr\partial_x Tr(f)_1&2\times 10^{-4} &0.0098& F_{02}
\\
\hline
\scr
Tr(f)_2 & 10^{-8}   &0.014& 
\\
\hline
\scr\partial_x Tr(f)_2&2\times 10^{-4} &0.0098& F_{03}
\\
\hline
\scr\partial^2_{xx} Tr(f)_2&2.01&0.0078& 
\\
\hline
\scr\partial^2_{xy} Tr(f)_2&1.99 &0.0074& 
\\
\hline
\scr\partial_x Tr(f)_2&2\times 10^{-4} &0.0098& F_{04}
\\
\hline
  \end{array}
  $$
     \item[6--] Nous avons successivement
     $\|F_0\|=2.4045$,
        \\$  \dis \eta=\frac{2\alpha_0}{12(R_{x_0}+\|F_0\|)R_{x_0}^{n-2}}=0.0064 ~> 
~\|F_0(0,0)\|~=~2.8\times 10^{-4}	.$
       \item[7--] Jacobienne de $F_0$ en $(0,0)$:
       $
   DF_0(0,0)=\left( \begin {array}{cc}  2.00012& 1.999\\ 
\noalign{\medskip}
 2.00012& 1.9978\\ 
\noalign{\medskip}
1.999& 2.00012\\ 
\noalign{\medskip}
1.9978& 2.0001
 \end {array} \right )
     $.
     Les valeurs singuli\`eres de cette jacobienne sont $0.0039$ et $5.6562$.
     Cette jacobienne a un $\varepsilon_0=0.008$-rang \'egal \`a $1$.%

    \item[8--] D\'enoyautage de $F_0$ en $(0,0)$ :
\begin{align*}
K(F_0)=\left (\begin{array}{ccc}
   0.00019940+ 2.0012\,x+ 1.9990\,y
   \\
   - 0.0012- 2.0\,y
   \\
    0.0043976+ 3.9956
\,y- 3.9956\,x
\\ 0.0053963- 5.9944\,x+ 3.9922\,y
\end{array}
\right ).
\end{align*}
    \item[9--] $F_1:=S(K(F_0))=K(F_0)$.
     \item[9--] \'Evaluation de $F_1$ en $(0,0)$ : $F_1(0,0)=(-0.12e-2, 0.19940e-3, 0.43976e-2, 0.53963e-2)$.
     Nous avons $\|F_1\|=3.9048$ et
      \\$\dis \eta=\frac{2\alpha_0}{12(R_{x_0}+\|F_1\|)R_{x_0}^{n-2}}=0.0044> 
\|F_1(0,0)\|=0.0012165.$
        \item[10--] Matrice jacobienne de $F_1$ et son \'evaluation en
$(0,0)$:
       $$  DF_1(x,y)=\left( \begin {array}{cc} 0&-2\\ 
\noalign{\medskip} 2.0012& 1.999
\\\noalign{\medskip}
  -3.9956& 3.9956
  \\\noalign{\medskip}
  -5.9944&3.9922
  \end {array} \right )
     $$
     Les valeurs singuli\`eres de $DF_1(0,0)$ sont $9.2$ et $3.34$ et son 
$\varepsilon_1=3.34$-rang est $2$.	
\item[11--] Extraction d'un syst\`eme r\'egulier de $F_1$ en $(0,0)$. 
     \begin{align*}
  dfl(f)=
  \left(\begin{array}{cccc}
  0.00019940+ 2.0012\,x+ 1.9990\,y
  \\
   0.0043976+ 3.9956\,y- 3.9956\,x
\end{array}
  \right )
\end{align*}
Nous trouvons que l'it\'er\'e de $(x_0,y_0)=(-0.0005,0.0006)$ par l'op\'erateur
de Newton est $(1.5\times 10^{-7},-4.5\times 10^{-7})$. Ceci illustre la 
propri\'et\'e
d'une convergence quadratique au premier pas de l'it\'eration.
   \end{itemize}
  Celle-ci est confirm\'ee par le comportement des it\'er\'es
  succesifs d\'etermin\'es par l'algorithme \textsf{Newton singulier}.
\begin{align*}
&[- 0.0005, 0.0006]
\\\\
&
[-1.5\times 10^{-7},-4.5\times 10^{-7}]
\\\\
&
[ 10^{-13 },  -1.7\times 10^{-13 }]
\\\\
&
[{ 9.6\times 10^{-27}},{ -2.6\times 10^{-26}}]
\\\\
&
[{ 3.5\times 10^{-52}},{ -6.13\times 10^{-52}}]
\\\\
&
[{ 1.2\times 10^{-103}},{ -3.4\times 10^{-103}}]
\\\\
&
[{ 5.9\times 10^{-206}},{ -1.02\times 10^{-206}}]
\end{align*}
\subsection{Illustration des th\'eor\`emes
~\ref{rouche_regular} et~\ref{new_gamma_tm}. }
Donn\'ee par la table ci-dessous~:
  $$
\hspace{-2cm} \begin{array}{|c|c|c|c|c|c|c|c|}
\hline
&&&&&&&\\
&\beta&\kappa&\gamma&\alpha&\scr 
2\gamma+1-\sqrt{(2\gamma+1)^2-1}&\theta=
\frac{\alpha+1-\sqrt{(\alpha+1)^2-4\alpha(\gamma+1)}}{2\kappa(\gamma+1)}&
\frac{2\gamma+1-\sqrt{ 4\gamma^2+3\gamma}}{\kappa(\gamma+1)}
\\\hline
(x_0,y_0) & 0.00078  &3 &2.68&0.00234&0.079&0.000786&
\\
\hline
\zeta=(0,0)& 0&3&2.67& & & & 	0.0269
\\
\hline
  \end{array}
  $$
\vfill\eject\null

\appendix\section{Feuille de calcul Maple de la sous-section~\ref{calc_num}}
{\textbf{Restart}}
{\advance\baselineskip -1pt 
\begin{verbatim}
restart:
with(LinearAlgebra):
print_f:=proc(f)
local k;
for k to nops(f) do print(f[k]);od;
end:
\end{verbatim}
}	
{\textbf{Constantes et proc\'edure de la norme L2}}
\vspace{-0.1cm}

{\advance\baselineskip -1pt 
\begin{verbatim}
(1-4*u+2*u^2)^2-2*u;
alpha0:=fsolve(%,u=0..1);
c0:=evalf(sum((1/2)^(2^k-1),k=0..infinity),20);
\end{verbatim}
}
\vspace{-0.8cm}

\begin{align*}
\left( 2\,{u}^{2}-4\,u+1 \right) ^{2}-2\,u
 \\\alpha_0:=0.1307169444
\\ c_0:=1.6328430180437862874
\end{align*}
{\advance\baselineskip -1pt 
\begin{verbatim}
Norm_L2:=proc(f,x0,r)
local i,N,c,k; global n,x,y;
N:=0;  c:=2/Pi^n/r^(2*n);
for k to nops(f) do
   N:=N+evalf(int(f[k]^2,[y=x0[2]-sqrt(r^2-(x-x0[1])^2)..
   x0[2]+sqrt(r^2-(x-x0[1])^2),x=x0[1]-r..x0[1]+r]),20);
od:
evalf(sqrt(c*N));
end:
\end{verbatim}
}
{\textbf{Proc\'edure S de s\'election}}
{\advance\baselineskip -1pt 
\begin{verbatim}
S:=proc(f,x0,r)
local eta,i; global Sf,Sf1,alpha0; global x,y;
for i to nops(f) do
  eta:=2*alpha0/12/(r+Norm_L2({f[i]},x0,r))/r;
  if eta>=evalf(abs(subs(x=x0[1],y=x0[2],f[i]))) then
     Sf1:=f[i]; 
     S({diff(f[i],x),diff(f[i],y)} minus {0},x0,r);
  else
     Sf:={op(Sf),Sf1};
  fi;
od; 
end:
\end{verbatim}
}
{\textbf{Proc\'edure S de s\'election d\'etaill\'ee}}
{\advance\baselineskip -1pt 
\begin{verbatim}
S_print:=proc(f,x0,r)
local eta_,ei,si,i,j,nopsSf; global Sf,Sf1,alpha0; global x,y,n;
for i to nops(f) do
  print();
  ei:=[seq(f[i][j],j=2..3)]; si:=ei[1]+ei[2];
  if si=0 then n:=i;fi;
  eta_:=2*alpha0/12/(r+Norm_L2({f[i][1]},x0,r))/r;
  if si=0  then printf("%s%g%s%v",`f[`,n,`]=`,evalf(f[i][1],5));
  else printf("%s%g%s%g%s%g%s%v",`dérivée de f[`,n,`] à  l'ordre
       [`,ei[1],`,`,ei[2],`]=`,evalf(f[i][1],5)); fi;
  if eta_>=evalf(abs(subs(x=x0[1],y=x0[2],f[i][1]))) then
     print();print(`évaluation en (0,0) =`,evalf(abs(subs(x=x0[1],y=x0[2],f[i][1])),5),
     `<  eta=`,evalf(eta_,5));
     Sf1:=f[i][1]; 
     S_print({[diff(Sf1,x),ei[1]+1,ei[2]],[diff(Sf1,y),ei[1],ei[2]+1]}        
              minus {[0,ei[1]+1,ei[2]],[0,ei[1],ei[2]+1]},x0,r);
  else
     print();print(`evaluation en (0,0) =`,evalf(abs(subs(x=x0[1],y=x0[2],f[i][1])),5),
          `>  eta=`,evalf(eta_,5));
     nopsSf:=nops(Sf);
     Sf:={op(Sf),Sf1};
     if nops(Sf)>nopsSf then  print();print(`on retient la fonction `,evalf(Sf1,5));   
     else print();print(`la fonction `,evalf(Sf1,5), `  est déjà  retenue`);
     fi:
  fi;
od;
end:
\end{verbatim}
}

{\textbf{Proc\'edure de d\'etermination du rang num\'erique}}
{\advance\baselineskip -1pt 
\begin{verbatim}
P:=proc(s)
local i; global lambda;
expand(product(lambda-s[i],i=1..nops(s)));
end:

numerical_rank:=proc(A)
local s,p,bg,k,B,i,m,r,epsilon,e,n,beta,gama,ar,alpha0,gr; global lambda;
m,n:=Dimension(A);
B:=A;
if n<m then B:=Transpose(A);fi;
s:=SingularValues(B,output='list');
p:=P(s);
print(p);
bg:=[]:
n:=nops(s):
for k to n do
   if coeff(p,lambda,n-k)<>0 then
      e:=seq((abs(coeff(p,lambda,i)/coeff(p,lambda,k)))(1/(k-i)),i=0..k-1);
      beta:=max(%);
      gama:=1;
      if k< n then
        seq((abs(coeff(p,lambda,i)/coeff(p,lambda,k)))^(1/(i-k)),i=k+1..n);
        gama:=max(%);
      fi;
   else beta:=1;gama:=1; fi;
   bg:=[op(bg),[beta,gama]];
od;
print(bg);
r:=0;  
for k to nops(bg) do
  if bg[k][1]*bg[k][2]<= 1.0/9 then  
      ar:=bg[k][1]*bg[k][2];  gr:=bg[k][2];   r:=k;
  fi;
od; 
if r=0 then epsilon:=s[n]; 
else
   epsilon:=(3*ar+1-sqrt((3*ar+1)^2-16*ar))/4/gr;
fi;
print(valeurs_singulières,evalf(s,5));
print(epsilon_,evalf(epsilon,5));
print(rang_numérique,n-r);
[n-r,epsilon,s];
end:
\end{verbatim}
}
{\textbf{Point initial, syst\`eme, rayon}}
{\advance\baselineskip -1pt 
\begin{verbatim}
Digits:=250:
x0y0:=[-0.0005,0.0006];
n:=2:
f:=[x^3/3+y^2*x+x^2+2*x*y+y^2, x^2*y-x*y^2+x^2+2*x*y+y^2]:
print_f(%);
Rx0y0:=1.0;
\end{verbatim}
}
\vspace{-0.75cm}

\begin{align*}
x0y0:=[- 0.0005, 0.0006]
\\
f:=[1/3\,{x}^{3}+{y}^{2}x+{x}^{2}+2\,xy+{y}^{2},{x}^{2}y-{y}^{2}x+{x}^{2}
+2\,xy+{y}^{2}]
\\
 Rx0y0:=1.0
\end{align*}
{\textbf{S\'erie tronqu\'ee \`a l'ordre 3 de f(x+x0,y+y0)}}
{\advance\baselineskip -1pt 
\begin{verbatim}
x0:=x0y0[1]:  y0:=x0y0[2]:
subs(x=x+x0,y=y+y0,f): 
f:=[seq(mtaylor(%[i],[x,y],4),i in {1,2})]:
print_f(evalf(%,5));
\end{verbatim}
}
\vspace{-0.75cm}

\begin{align*}
0.00000000978+ 0.000201\,x+ 0.000199\,y+ 1.0\,{x}^{2}+ 2.0\,xy+ 1.0
\,{y}^{2}+ 0.333\,{x}^{3}+{y}^{2}x
\\
 0.0000000103+ 0.000201\,y+
 0.000199\,x+ 2.0\,xy+ 1.0\,{x}^{2}+ 1.0\,{y}^{2}+{x}^{2}y- 1.0\,{y}^{
2}x
\end{align*}
{\textbf{Calcul de F0:=S(f)}}
{\advance\baselineskip -1pt 
\begin{verbatim}
Sf:={}:
S({op(f)},[0.0,0.0],1.0):
F0:=[op(Sf)]:
print_f(evalf(%,5));
\end{verbatim}
}
\begin{align*}
0.00019940+ 2.0012\,x+ 1.9990\,y+ 2.0\,xy
\\
 0.00019904+ 1.9978\,y+
 2.0012\,x+ 2.0\,xy- 1.0\,{y}^{2}
 \\ 0.00020061+ 1.9990\,x+ 2.0012\,y+
 1.0\,{x}^{2}+{y}^{2}
 \\ 0.00020085+ 1.9978\,x+ 2.0010\,y+{x}^{2}- 2.0\,
xy
\end{align*}
{\textbf{D\'etail des calculs de F0:=S(f)}}
{\advance\baselineskip -1pt 
\begin{verbatim}
Sf:={}:
g:=[seq([f[i],0,0],i=1..nops(f))]:
S_print({op(g)},[0.0,0.0],1.0):
\end{verbatim}
}
{\advance\baselineskip -6pt 
\begin{verbatim}

f[1]=.97783e-8+.20061e-3*x+.19940e-3*y+.99950*x^2+2.0012*x*y+.9995*y^2+.33333*x^3+y^2*x

     évaluation en (0,0) = 9.7783 10^{-9}   <  eta= 0.010711

dérivée de f[1] à  l'ordre [0,1]=.19940e-3+2.0012*x+1.9990*y+2.*x*y

     évaluation en (0,0) = 0.00019940 <  eta= 0.0070694

dérivée de f[1] à  l'ordre [0,2]= 1.9990+2.*x 

     évaluation en (0,0) = 1.9990 >  eta= 0.0052358

on retient la fonction , 0.00019940 + 2.0012 x + 1.9990 y + 2. x y

dérivée de f[1] à  l'ordre [1,1]= 2.0012+2.*y 

     évaluation en (0,0) = 2.0012 >  eta= 0.0052323

la fonction  0.00019940 + 2.0012 x + 1.9990 y + 2. x y  est déjà  retenue

dérivée de f[1] à  l'ordre [1,0]= .20061e-3+1.9990*x+2.0012*y+1.0000*x^2+y^2 

     évaluation en (0,0) = 0.00020061 <  eta= 0.0068934

dérivée de f[1] à  l'ordre [2,0]= 1.9990+2.0000*x 

     évaluation en (0,0) = 1.9990 >  eta= 0.0052358

on retient la fonction 0.00020061 + 1.9990 x + 2.0012 y + 1.0000 x^2  + y^2 

dérivée de f[1] à  l'ordre [1,1]= 2.0012+2.*y 

     évaluation en (0,0) = 2.0012 >  eta= 0.0052323
        
la fonction  0.00020061 + 1.9990 x + 2.0012 y + 1.0000 x^2  + y^2   est déjà  retenue

f[2]= .10330e-7+.20085e-3*y+.19904e-3*x+1.9978*x*y+1.0006*x^2+1.0005*y^2+x^2*y-1.*y^2*x 
                                                       
     évaluation en (0,0) = 1.0330 10^{-8}   <  eta= 0.013775

dérivée de f[2] à  l'ordre [1,0]= .19904e-3+1.9978*y+2.0012*x+2.*x*y-1.*y^2 

     évaluation en (0,0) = 0.00019904 <  eta= 0.0098688

dérivée de f[2] à  l'ordre [2,0]= 2.0012+2.*y 

     évaluation en (0,0) = 2.0012 >  eta= 0.0078227

on retient la fonction 0.00019904 + 1.9978 y + 2.0012 x + 2. x y - 1. y^2 

dérivée de f[2] à  l'ordre [1,1]= 1.9978+2.*x-2.*y 

     évaluation en (0,0) = 1.9978 >  eta= 0.0073777
                                                                
la fonction  0.00019904 + 1.9978 y + 2.0012 x + 2. x y - 1. y^2   est déjà  retenue

dérivée de f[2] à  l'ordre [0,1]= .20085e-3+1.9978*x+2.0010*y+x^2-2.*x*y 

     évaluation en (0,0) = 0.00020085 <  eta= 0.0098688

dérivée de f[2] à  l'ordre [0,2]= 2.0010-2.*x 

     évaluation en (0,0) = 2.0010 >  eta= 0.0078231

on retient la fonction  0.00020085 + 1.9978 x + 2.0010 y + x^2  - 2. x y

dérivée de f[2] à  l'ordre [1,1]= 1.9978+2.*x-2.*y 

     évaluation en (0,0) = 1.9978 >  eta= 0.0073777
         
la fonction  0.00020085 + 1.9978 x + 2.0010 y + x^2  - 2. x y est déjà  retenue

\end{verbatim}
}
{\textbf{\'Evaluation en (0,0) et norme L2 de F0}}
{\advance\baselineskip -1pt 
\begin{verbatim}
eval_F0:=subs(x=0,y=0,F0): print(evalf(%,5));
NF0:=Norm(Vector(2,%),2):  print(evalf(%,5));
NL2F0:=Norm_L2(F0,[0,0],Rx0y0): print(evalf(%,5));
\end{verbatim}
}
\vspace{-0.75cm}

\begin{align*}
        [0.00019940, 0.00019904, 0.00020061, 0.00020085]
        \\
                           0.00028174
                           \\
                             2.4045
\end{align*}
{\textbf{Test $||$F0(0,0)$||<$eta}}
{\advance\baselineskip -1pt 
\begin{verbatim}
eta:=2*alpha0/12/(Rx0y0+NL2F0)/Rx0y0^(n-2):
NF0<eta:  print(evalf(%,5));
\end{verbatim}
}
\vspace{-0.75cm}

\begin{align*}
        0.28174e-3 < 0.63992e-2
\end{align*}
{\textbf{Valeurs singuli\`eres et rang num\'erique de la jacobienne de F0 en (0,0)}}
{\advance\baselineskip -1pt 
\begin{verbatim}
J:=VectorCalculus[Jacobian](F0,[x,y]): print(evalf(%,5));
J0:=subs(x=0,y=0,J):  print(evalf(%,5));
numerical_rank(J0):
\end{verbatim}
}
\vspace{-0.75cm}

\begin{align*}
      \left[ \begin {array}{cc}  2.0012+2\,y& 1.9990+2\,x
\\ \noalign{\medskip} 2.0012+2\,y& 1.9978+2\,x- 2.0\,y
\\ \noalign{\medskip}
 1.999
+ 2.0\,x& 2.0012+2\,y\\ \noalign{\medskip} 1.9978+2\,x- 2.0\,y& 2.0010
- 2.0\,x\end {array} \right]
\\
\left[ \begin {array}{cc}  2.0012& 1.9990\\ \noalign{\medskip} 2.0012
& 1.9978\\ \noalign{\medskip}
 1.999
& 2.0012\\ \noalign{\medskip} 1.9978& 2.0010\end {array} \right] 
\\
            \textrm{valeurs\,singuli\`eres}= [5.6562, 0.0039667]\\
                      \epsilon= 0.0079335\\
                      \textrm{ rang num\'erique}= 1
\end{align*}
{\textbf{D\'enoyautage de F0 en (0,0)}}
{\advance\baselineskip -1pt 
\begin{verbatim}
J(2..4,2)-J(2..4,1)*J(1,2)/J(1,1):
F1:=map(mtaylor,[F0[1],%[1],%[2],%[3]],[x,y],2):
print_f(evalf(%,5));
\end{verbatim}
}
\vspace{-0.75cm}

\begin{align*}
      0.00019940+ 2.0012\,x+ 1.9990\,y
      \\
      - 0.0012- 2.0\,y
      \\
       0.0043976+ 3.9956
\,y- 3.9956\,x
\\
 0.0053963- 5.9944\,x+ 3.9922\,y
\end{align*}
{\textbf{Calcul de F1:=S(F1)}}
{\advance\baselineskip -1pt 
\begin{verbatim}
Sf:={}:
S({op(F1)},[0.0,0.0],1.0):
F1:=[op(Sf)]:
print_f(evalf(%,5));
\end{verbatim}
}
\vspace{-0.75cm}

\begin{align*}
       - 0.0012- 2.0\,y
       \\
        0.00019940+ 2.0012\,x+ 1.9990\,y
        \\
         0.0043976+ 3.9956
\,y- 3.9956\,x
\\ 0.0053963- 5.9944\,x+ 3.9922\,y
\end{align*}
{\textbf{\'Evaluation en (0,0) et norme L2 de F1}}
{\advance\baselineskip -1pt 
\begin{verbatim}
eval_F1:=subs(x=0,y=0,F1): print(evalf(%,5));
NF1:=Norm(Vector(2,%),2):  print(evalf(%,5));
NL2F1:=Norm_L2(F1,[0,0],Rx0y0): print(evalf(%,5));
\end{verbatim}
}
\vspace{-0.75cm}

\begin{align*}
                 [-0.0012, 0.00019940, 0.0043976, 0.0053963]\\
                           0.0012165\\
                             3.9048\\
\end{align*}
{\textbf{Test $||$F1(0,0)$||<$eta}}
{\advance\baselineskip -1pt 
\begin{verbatim}
eta:=2*alpha0/12/(Rx0y0+NL2F1)/Rx0y0^(n-2):
NF1<eta:  print(evalf(%,5));
\end{verbatim}
}
\vspace{-0.75cm}

\begin{align*}
 0.12165e-2 < 0.44418e-2
\end{align*}
{\textbf{Valeurs singuli\`eres et rang num\'erique de la jacobienne de F1 en (0,0)}}
{\advance\baselineskip -1pt 
\begin{verbatim}
eta:=2*alpha0/12/(Rx0y0+NL2F0)/Rx0y0^(n-2):
NF0<eta:  print(evalf(%,5));
\end{verbatim}
}
\vspace{-0.75cm}

\begin{align*}
        \left[ \begin {array}{cc}  0.0&- 2.0\\ \noalign{\medskip} 2.0012&
 1.9990\\ \noalign{\medskip}- 3.9956& 3.9956\\ \noalign{\medskip}-
 5.9944& 3.9922\end {array} \right]
 \\
 \textrm{valeurs\,singuli\`eres}=[9.2020, 3.3353]\\
                      \epsilon= 3.3353\\
                      \textrm{ rang num\'erique}= 2
\end{align*}
{\textbf{Extraction d'un syst\`eme r\'egulier}}
{\advance\baselineskip -1pt 
\begin{verbatim}
R:=[seq(F1[i],i={2,3})]: print_f(evalf(%,5));
\end{verbatim}
}
\vspace{-0.75cm}

\begin{align*}
       0.00019940+ 2.0012\,x+ 1.9990\,y
       \\
        0.0043976+ 3.9956\,y- 3.9956\,x
\end{align*}
{\textbf{It\'er\'e de (x0,y0) par l'op\'erateur de Newton singulier}}
{\advance\baselineskip -1pt 
\begin{verbatim}
eta:=2*alpha0/12/(Rx0y0+NL2F0)/Rx0y0^(n-2):
NF0<eta:  print(evalf(%,5));
\end{verbatim}
}
\vspace{-0.75cm}

\begin{align*}
        0.28174e-3 < 0.63992e-2
\end{align*}
{\textbf{D\'eflation et it\'erations successives}}
{\advance\baselineskip -1pt 
\begin{verbatim}
deflation:=proc(X0)
local f, X, x0, y0, J, F0,F1,i; global Sf,x,y;
f:=[x^3/3+y^2*x+x^2+2*x*y+y^2, x^2*y-x*y^2+x^2+2*x*y+y^2];
x0:=X0[1]:  y0:=X0[2]:
subs(x=x+x0,y=y+y0,f): #print(f);
F0:=[seq(mtaylor(%[i],[x,y],4),i in {1,2})]; #print(%);
Sf:={}:
S({op(F0)},[0.0,0.0],1.0):
F0:=[op(Sf)]:
J:=VectorCalculus[Jacobian](F0,[x,y]): 
J(2..4,2)-J(2..4,1)*J(1,2)/J(1,1): 
F1:=map(mtaylor,[F0[1],%[2]],[x,y],2); 
op(solve(F1,[x,y])):
[rhs(%[1])+x0,rhs(%[2])+y0];
end:

Digits:=250:
X0:=[-0.0005,0.0006];
for i to 6 do 
  X0:=deflation(X0): print(evalf(X0,5));
od: 
\end{verbatim}
}
\vspace{-0.75cm}

\begin{align*}
                              [-0.0005, 0.0006]
\\
[ 0.00000015231,- 0.00000045263]
\\
[{ 1.0038\times 10^{-13}},-{ 1.6932\times 10^{-13}}]
\\
[{ 9.6859\times 10^{-27}},-{ 2.6681\times 10^{-26}}]
\\
[{ 3.5521\times 10^{-52}},-{ 6.1365\times 10^{-52}}]
\\
[{ 1.2568\times 10^{-103}},-{ 3.4366\times 10^{-103}}]
\\
[{ 5.9038\times 10^{-206}},-{ 1.0223\times 10^{-205}}]
\end{align*}
{\textbf{Th\'eor\`eme-alpha 9}}
{\advance\baselineskip -1pt 
\begin{verbatim}
alpha_theorem:=proc(f,x0,rho)
local Nf,kappa,J,beta,gama,alpha,theta,nu;  global n,x,y;
nu:=Norm(Vector(2,x0))/rho;
kappa:=max(1,(n+1)/rho/(1-nu^2));
VectorCalculus[Jacobian](f,[x,y]);
J:=MatrixInverse(subs(x=0,y=0,%));
%.Vector(2,subs(x=0,y=0,f));
beta:=Norm(%,2);
Nf:=Norm_L2(f,[0,0],rho); 
gama:=max(1,Nf*kappa*Norm(J,2)/(1-nu^2)^((n+1)/2)); 
alpha:=beta*kappa;
theta:=0; 
if alpha < 2*gama+1-sqrt((2*gama+1)^2-1) then
   theta:=(alpha+1-sqrt((alpha+1)^2-4*alpha*(gama+1)))/2/kappa/(gama+1);
fi;  
print(beta_,evalf(beta,5));
print(gamma_,evalf(gama,5));
print(kappa_,evalf(kappa,5));
alpha<2*gama+1-sqrt((2*gama+1)^2-1); print(test_alpha,evalf(%,5));
print(theta_,evalf(theta,5));
end:

Digits:=50:
X0:=[-0.0005,0.0006];  print(evalf(F1,5));
alpha_theorem([seq(F1[i],i in {2,3})],X0,Rx0y0):

\end{verbatim}
}
\vspace{-1.25cm}

\begin{align*}
                              \beta= 0.00078147
                         \\\gamma= 2.6737
                         \\\kappa= 3.0000
                \\\textrm{testalpha : } 0.0023444 < 0.079267
                    \\   \theta= 0.00078644
                    \end{align*}
                    \vspace{-0.75cm}

{\textbf{Th\'eor\`eme-gamma 11}}
{\advance\baselineskip -1pt 
\begin{verbatim}
gamma_theorem:=proc(f,x0,rho,r)
local Nf,NormJ,kappa,J,beta,gama,alpha,nu;  global  x,y;
nu:=Norm(Vector(2,x0))/rho;
kappa:=max(1,(n+1)/rho/(1-nu^2));
VectorCalculus[Jacobian](f,[x,y]);
if r>0 then
  J:=MatrixInverse(subs(x=0,y=0,%[1..r,1..r]));
  NormJ:=Norm(J,2);
elseyyygggxc
fi; 
Nf:=Norm_L2(f,x0,rho); 
gama:=max(1,Nf*kappa*NormJ/(1-nu^2)^((n+1)/2));
print(kappa_,evalf(kappa,5));
print(gamma_,evalf(gama,5));
(2*gama+1-sqrt(4*gama^2+3*gama))/kappa/(gama+1);
print(rayon,evalf(%,5));
end:

gamma_theorem([2*(x+y),4*(x-y)], [0,0],1,2);
\end{verbatim}
}
\vspace{-0.75cm}

\begin{align*}
                           \kappa= 3.
                         \\\gamma= 2.6761
                        \\rayon= 0.026858
\end{align*}


\bibliographystyle{acm}

\begin{thebibliography}{}

\end{thebibliography}


\begin{thebibliography}{10}

\bibitem{ArHi2011}
{\sc Argyros, I.~K., and Hilout, S.}
\newblock Semilocal convergence of {N}ewton's method for singular systems with
  constant rank derivatives.
\newblock {\em J. Korean Society of Mathematical Education Ser. B : Pure and
  Applied Mathematics 18}, 2 (2011), 97--111.

\bibitem{BCSS98}
{\sc Blum, L., Cucker, F., Shub, M., and Smale, S.}
\newblock {\em Complexity and Real Computation}.
\newblock Springer-Verlag, New York-Berlin, 1998.

\bibitem{cox05}
{\sc {Cox, D.A. and Little, J. and O'shea, D.}}
\newblock {\em Using algebraic geometry}, vol.~185.
\newblock Springer, 2005.

\bibitem{DLZ11}
{\sc {Dayton, B. and Li, T.Y. and Zeng, Z.}}
\newblock Multiple zeros of nonlinear systems.
\newblock {\em Mathematics of Computation 80\/} (2011), 2143--2168.

\bibitem{DZ05}
{\sc {Dayton, B.H. and Zeng, Z.}}
\newblock Computing the multiplicity structure in solving polynomial systems.
\newblock In {\em Proceedings of the 2005 international symposium on Symbolic
  and algebraic computation\/} (2005), ACM, pp.~116--123.

\bibitem{DK180}
{\sc {Decker, D.W and Kelley, C.T.}}
\newblock {N}ewton's method at singular points. i.
\newblock {\em SIAM Journ.al on Numerical Analysis 17}, 1 (1980), 66--70.

\bibitem{DK280}
{\sc {Decker, D.W. and Kelley, C.T.}}
\newblock {N}ewton's method at singular points. ii.
\newblock {\em SIAM Journal on Numerical Analysis 17}, 3 (1980), 465--471.

\bibitem{DKK83}
{\sc {Decker, D.W., Keller, H.B. and Kelley, C.T.}}
\newblock Convergence rates for {N}ewton's method at singular points.
\newblock {\em SIAM Journal on Numerical Analysis 20}, 2 (1983), 296--314.

\bibitem{dedieu06}
{\sc Dedieu, J.-P.}
\newblock {\em Points fixes, z{\'e}ros et la m{\'e}thode de Newton}.
\newblock Springer, 2006.

\bibitem{DS00}
{\sc {Dedieu, J.-P. and Shub, M.}}
\newblock On simple double zeros and badly conditioned zeros of analytic
  functions of n variables.
\newblock {\em Mathematics of computation\/} (2001), 319--327.

\bibitem{EY36}
{\sc Eckart, C., and Young, G.}
\newblock The approximation of one matrix by another of lower rank.
\newblock {\em Psychometrika 1}, 3 (1936), 211--218.

\bibitem{emsalem78}
{\sc Emsalem, J.}
\newblock G{\'e}om{\'e}trie des points {\'e}pais.
\newblock {\em Bull. Soc. math. France 106\/} (1978), 399--416.

\bibitem{fierro_hansen_05}
{\sc Fierro, R.~D., and Hansen, P.~C.}
\newblock {UTV} expansion pack: Special-purpose rank-revealing algorithms.
\newblock {\em Numerical Algorithms 40}, 1 (2005), 47--66.

\bibitem{GY2}
{\sc Giusti, M., and Yakoubsohn, J.-C.}
\newblock Multiplicity hunting and approximating multiple roots of polynomial
  systems.
\newblock {\em Recent Advances in Real Complexity and Computation 604\/}
  (2014), 105--128.

\bibitem{GLSY07}
{\sc {Giusti, M. and Lecerf, G. and Salvy, B. and Yakoubsohn, J.-C.}}
\newblock On location and approximation of clusters of zeros: Case of embedding
  dimension one.
\newblock {\em Foundations of Computational Mathematics 7}, 1 (2007), 1--58.

\bibitem{GLSY05}
{\sc {Giusti, M. and Lecerf, G. and Salvy, B. and Yakoubsohn, J.C.}}
\newblock On location and approximation of clusters of zeros of analytic
  functions.
\newblock {\em Foundations of Computational Mathematics 5}, 3 (2005), 257--311.

\bibitem{G85}
{\sc Griewank, A.}
\newblock On solving nonlinear equations with simple singularities or nearly
  singular solutions.
\newblock {\em SIAM review 27}, 4 (1985), 537--563.

\bibitem{GO81}
{\sc {Griewank, A. and Osborne, M.R.}}
\newblock {N}ewton's method for singular problems when the dimension of the
  null space is $>1$.
\newblock {\em SIAM Journal on Numerical Analysis 18}, 1 (1981), 145--149.

\bibitem{GO83}
{\sc {Griewank, A. and Osborne, M.R.}}
\newblock Analysis of {N}ewton's method at irregular singularities.
\newblock {\em SIAM Journal on Numerical Analysis 20}, 4 (1983), 747--773.

\bibitem{hauenstein_mourrain_szanto_16}
{\sc Hauenstein, J.~D., Mourrain, B., and Szanto, A.}
\newblock On deflation and multiplicity structure.
\newblock {\em arXiv preprint arXiv:1601.00611\/} (2016).

\bibitem{Kato13}
{\sc Kato, T.}
\newblock {\em Perturbation theory for linear operators}, vol.~132.
\newblock Springer Science \& Business Media, 2013.

\bibitem{kelleysuresh1983}
{\sc Kelley, C., and Suresh, R.}
\newblock A new acceleration method for {N}ewton's method at singular points.
\newblock {\em SIAM journal on numerical analysis 20}, 5 (1983), 1001--1009.

\bibitem{krantz13}
{\sc Krantz, S.~G.}
\newblock {\em Geometric analysis of the Bergman kernel and metric}.
\newblock Springer, 2013.

\bibitem{lecerf02}
{\sc Lecerf, G.}
\newblock Quadratic {N}ewton iteration for systems with multiplicity.
\newblock {\em Foundations of Computational Mathematics 2}, 3 (2002), 247--293.

\bibitem{lvz06}
{\sc Leykin, A., Verschelde, J., and Zhao, A.}
\newblock {N}ewton's method with deflation for isolated singularities of
  polynomial systems.
\newblock {\em Theoretical Computer Science 359}, 1 (2006), 111--122.

\bibitem{li2012}
{\sc Li, N., and Zhi, L.}
\newblock Computing isolated singular solutions of polynomial systems: case of
  breadth one.
\newblock {\em SIAM Journal on Numerical Analysis 50}, 1 (2012), 354--372.

\bibitem{li121}
{\sc Li, N., and Zhi, L.}
\newblock Computing the multiplicity structure of an isolated singular
  solution: Case of breadth one.
\newblock {\em Journal of Symbolic Computation 47}, 6 (2012), 700--710.

\bibitem{li2014}
{\sc Li, N., and Zhi, L.}
\newblock Verified error bounds for isolated singular solutions of polynomial
  systems.
\newblock {\em SIAM Journal on Numerical Analysis 52}, 4 (2014), 1623--1640.

\bibitem{MM11}
{\sc Mantzaflaris, A., and Mourrain, B.}
\newblock Deflation and certified isolation of singular zeros of polynomial
  systems.
\newblock In {\em Proceedings of the 36th international symposium on Symbolic
  and algebraic computation\/} (2011), ACM, pp.~249--256.

\bibitem{markovsky2011}
{\sc Markovsky, I.}
\newblock {\em Low rank approximation: algorithms, implementation,
  applications}.
\newblock Springer Science \& Business Media, 2011.

\bibitem{mirsky60}
{\sc Mirsky, L.}
\newblock Symmetric gauge functions and unitarily invariant norms.
\newblock {\em The quarterly journal of mathematics 11}, 1 (1960), 50--59.

\bibitem{Mou97}
{\sc Mourrain, B.}
\newblock Isolated points, duality and residues.
\newblock {\em Journal of Pure and Applied Algebra 117\/} (1997), 469--493.

\bibitem{ojika87}
{\sc Ojika, T.}
\newblock Modified deflation algorithm for the solution of singular problems.
  {\cRM{1}}. {A} system of nonlinear algebraic equations.
\newblock {\em Journal of mathematical analysis and applications 123}, 1
  (1987), 199--221.

\bibitem{OWM83}
{\sc Ojika, T., Watanabe, S., and Mitsui, T.}
\newblock Deflation algorithm for the multiple roots of a system of nonlinear
  equations.
\newblock {\em Journal of mathematical analysis and applications 96}, 2 (1983),
  463--479.

\bibitem{rall66}
{\sc {Rall, L.B.}}
\newblock Convergence of the {N}ewton process to multiple solutions.
\newblock {\em Numerische Mathematik 9}, 1 (1966), 23--37.

\bibitem{reddien1978}
{\sc Reddien, G.}
\newblock On {N}ewton's method for singular problems.
\newblock {\em SIAM Journal on Numerical Analysis 15}, 5 (1978), 993--996.

\bibitem{reddien1979}
{\sc Reddien, G.}
\newblock {N}ewton's method and high order singularities.
\newblock {\em Computers \& Mathematics with Applications 5}, 2 (1979), 79--86.

\bibitem{rudin08}
{\sc Rudin, W.}
\newblock {\em Function theory in the unit ball of $\C^n$}.
\newblock Springer, 2008.

\bibitem{schroder1870}
{\sc Schr\"oder, E.}
\newblock {U}eber unendliche viele {A}lgorithmen zur {A}ufl\"osung der
  {G}leichungen.
\newblock {\em Mathematische Annalen 2\/} (1870), 317--365.

\bibitem{shamanskii67}
{\sc Shamanskii, V.}
\newblock On the application of {N}ewton's method in a singular case.
\newblock {\em USSR Computational Mathematics and Mathematical Physics 7}, 4
  (1967), 72--85.

\bibitem{ShenYpma2005}
{\sc Shen, Y.-Q., and Ypma, T.~J.}
\newblock {N}ewton's method for singular nonlinear equations using approximate
  left and right nullspaces of the jacobian.
\newblock {\em Applied numerical mathematics 54}, 2 (2005), 256--265.

\bibitem{stewart_sun90}
{\sc Stewart, G.~W. and Sun, J.~G.}
\newblock {\em Matrix Perturbation Theory.}
\newblock Academic press New York, 1990.

\bibitem{weyl1912}
{\sc Weyl, H.}
\newblock Das asymptotische Verteilungsgesetz der Eigenwerte linearer partieller Differentialgleichungen (mit einer Anwendung auf die Theorie der Hohlraumstrahlung).
\newblock {\em Mathematische Annalen 71}, 4 (1912), 441--479.

\bibitem{xuli2008}
{\sc Xu, X., and Li, C.}
\newblock Convergence criterion of {N}ewton's method for singular systems with
  constant rank derivatives.
\newblock {\em Journal of Mathematical Analysis and Applications 345}, 2
  (2008), 689--701.

\bibitem{yak90}
{\sc Yakoubsohn, J.-C.}
\newblock On {N}ewton's rule and {S}ylvester's theorems.
\newblock {\em Journal of Pure and Applied Algebra 65}, 3 (1990), 293--309.

\bibitem{yamamoto1983}
{\sc Yamamoto, N.}
\newblock {N}ewton's method for singular problems and its application to
  boundary value problems.
\newblock {\em Journal of mathematics, Tokushima University 17\/} (1983),
  27--88.

\end{thebibliography}

 \end{document}